\newtheorem{thm}{Theorem}[section]
\newtheorem{defn}[thm]{Definition}
\newtheorem{rem}[thm]{Remark}
\newtheorem{fact}[thm]{Fact}
\newtheorem{assumption}[thm]{Assumption}
 \newtheorem{prop}[thm]{Proposition}
\newtheorem{lem}[thm]{Lemma}
\newtheorem{cor}[thm]{Corollary}
 \thanks{
	This research was supported  by the German Research Foundation (DFG) grant {\em Renewal Theory and Statistics of Rare Events in Infinite Ergodic Theory} (Gesch\"aftszeichen KE 1440/2-1).
}
\begin{document}

\title[Escape rates for special flows]{Escape rates for special flows and their higher order asymptotics}

\author{Fabian Dreher and Marc Kesseb\"ohmer	 }

 \date{\today}

\address{
	Fachbereich~3 -- Mathematik und Informatik, Universit\"at Bremen, Bibliothekstra{\ss}e~1, 28359~Bremen, Germany
}
 \email{fdreher@uni-bremen.de}\email{mhk@uni-bremen.de}

 \subjclass{28A65, 37D35}
 \keywords{special flows, (local) escape rates, induced pressure}
 
\maketitle

\begin{abstract}
	In this paper  escape rates and local escape rates for special flows are sudied. In a general context the first result is that the escape rate depends monotonically on the ceiling function and fulfils certain scaling, invariance, and continuity  properties.  For the metric setting local escape rates are considered. If the base transformation is ergodic and exhibits an exponential convergence in probability of ergodic sums, then the local escape rate with respect to the flow is just the local escape rate with respect to the base transformation, divided by the integral of the ceiling function. Also a reformulation with respect to induced pressure is presented. Finally, under additional regularity conditions higher order asymptotics for the local escape rate are established.	
\end{abstract}

\global\long\def\ind{\mathbbm{1}}
\global\long\def\complement{\mathsf{c}}

\global\long\def\dx#1{\,\mathrm{d}#1}
\global\long\def\L{\mathcal{L}}
\global\long\def\vect#1{\accentset{\rightharpoonup}{#1}}
\global\long\def\vectfill#1{\overrightharpoon{#1}}
\global\long\def\Log{\operatorname{Log}}
\global\long\def\diag{\operatorname{diag}}
\global\long\def\oo#1{\operatorname{o}\bigl(#1\bigr)}
\global\long\def\OO#1{\operatorname{O}\bigl(#1\bigr)}
\global\long\def\bbar#1{{\mkern1.5mu\overline{\mkern-1.5mu#1\mkern-1.5mu}\mkern1.5mu}}
\global\long\def\negspace#1#2{\hspace{-#1}#2\hspace{#1}}
\global\long\def\var{\operatorname{var}}
\global\long\def\1{\mathbbm{1}}

\section{Introduction}

A dynamical system is called open if over time mass is leaking from it. A closed system can be turned into an open one by designating a subset $A$ of the phase space $X$ as a hole. The asymptotic speed with which the system is leaking mass is measured by the escape rate $\rho\left(A\right)$. In addition to the study of conditionally invariant measures \cite{Pianigiani:1979,Collet:1997,Liverani:2003}, there has been an increased interest in the dependency of the escape rate on the size and position of the hole \cite{Keller:2009,Bunimovich:2011,Ferguson:2012,Bandtlow:2014}.

This paper investigates escape rates in the context of flows. It is natural to consider ergodic systems, because ergodicity implies that eventually the entire (finite) mass escapes through any hole of positive measure -- a mandatory condition for a non-zero escape rate. A fundamental result by Ambrose \cite{Ambrose:1941} states that every measurable ergodic flow is isomorphic to a flow built under a function, also referred to as special flow or suspension flow. It is a consequence of that result that this paper emphasises the study of escape rates for special flows with a base transformation $\theta:X\rightarrow X$ and a ceiling function $\varphi:X\rightarrow\mathbb{R}$ (Definition~\ref{def:Stroemung-unter-einer-Funktion}) and focuses in particular on the question of how the escape rate changes when the ceiling function is altered while the base transformation remains fixed.

Results are obtained in several different settings that impose successively increasing restrictions on base transformation, ceiling function and shape of the hole.

The first setting only requires basic measurability and integrability conditions on base transformation and ceiling functions. In this situation, elementary results about the relation between the escape rate and the hole are proved (Proposition~\ref{prop:EigenschaftenAR_Mengen}) -- the most notable being the observation that the escape rate of a hole in a special flow with bounded ceiling depends only on the shadow of the hole in the base. Furthermore it is shown (Proposition~\ref{prop:EigenschaftenAR_Dach}) that the escape rate depends monotonically on the ceiling function, fulfils a scaling property and is invariant under addition of coboundaries. Also, the escape rate is continuous with respect to the supremum norm (Proposition~\ref{prop:AR_glmKonvergenz}).

In Section~\ref{chap:LokaleAR}, the base is required to be a metric space. This allows one to define the local escape rate which relates the escape rate to the measure of the hole for the case of holes shrinking towards a point. If the base transformation is ergodic and exhibits an exponential convergence of ergodic sums, then the local escape rate with respect to the flow is just the local escape rate with respect to the base transformation, divided by the integral of the ceiling function (Theorem~\ref{thm:LokaleAR_IntegralMalBasis}).  In particular, this holds if the base transformation is  a Markov shift or more generally a weak invariant Gibbs measure as introduced in Section~\ref{sec:weakThermo}. 

In Section~\ref{chap:AR-als-induzierter-Druck}, it is shown that the escape rate of a hole that can be written as a union of cylinder sets of bounded length can be rephrased as an induced pressure in the sense of Jaerisch, Kesseb\"ohmer and Lamei \cite{Jaerisch:2014} if the base transformation is a symbolic shift on an at most countable alphabet with  weak Gibbs probability measure (Proposition~\ref{prop:AR-als-induzierter-Druck}). Properties of the induced pressure then allow one to deduce the sublinearity of the reciprocal escape rate in that situation (Corollary~\ref{cor:AR-sublinear}).

Finally, the most restrictive setting is used in Section~\ref{chap:Das-Umkehrproblem} where the base transformation is required to be a Markov shift on a finite alphabet and the ceiling function is required to be constant on cylinder sets of a certain length. In this section, the results of Cristadoro, Knight and Degli Esposti \cite{Cristadoro:2013} are expanded upon in order to prove higher order asymptotics for the escape rate of shrinking holes (Theorems~\ref{thm:Cristadoro_Nullstelle_kleinO}~and~\ref{thm:Explizite-erste-zwei-Ordnungen}). This also allows one to recover information about the orbit lengths of periodic points purely by considering escape rates and the measure of shrinking holes.

The paper is based on the first author's dissertation \cite{Dreher:2015} that was supervised by the second author.

\section{Preliminaries}\label{chap:Vorbereitungen}

Whenever the real numbers $\mathbb{R}$ are used, they are understood to be equipped with the Borel $\sigma$-algebra and the Lebesgue measure.  
The natural numbers $\mathbb{N}$ are defined as the set $\left\{ 1,2,3,\dots\right\} $, and $\mathbb{N}_{0}$ refers to the set $\mathbb{N}\cup\left\{ 0\right\}$. The indicator function of a set $A\subset X$ is written as $\ind_{A}$, and the indicator function of the whole space $X$ is abbreviated as $\ind$.

\subsection{Special flows}

Consider a map $\theta:X\rightarrow X$, a function $\varphi:X\rightarrow\mathbb{R}$ and an element $x\in X$. The following notations will be used:
\begin{align}
n\in\mathbb{N}:\quad S_{n}\varphi\left(x\right) & \coloneqq\sum_{k=0}^{n-1}\varphi\circ\theta^{k}\left(x\right)\label{eq:Def_Sn_Nt_NA}\\
t\in\mathbb{R}_{\geq0}:\quad N_{t}^{\varphi}\left(x\right) & \coloneqq\min\left\{ n\in\mathbb{N}_{0}\middle|\; S_{n}\varphi\left(x\right)>t\right\} \nonumber \\
A\subset X:\quad N_{A}\left(x\right) & \coloneqq\min\left\{ n\in\mathbb{N}_{0}\middle|\; \theta^{n}x\in A\right\} \nonumber 
\end{align}
If the function $\varphi$ can be inferred from the context, this can be shortened to $N_{t}\left(x\right)$, and in case of terms like $S_{N_{t}\left(x\right)}\varphi\left(x\right)$ even further to $S_{N_{t}}\varphi\left(x\right)$ or $S_{N_{t}}\left(x\right)$.

A \emph{semiflow} is a family $\Phi\coloneqq\left( \Phi_{t}\middle|\; t\in\mathbb{R}_{\geq0}\right)$ of measure-preserving endomorphisms of a measure space $\left(X,\mathcal{A},\mu\right)$ that satisfy
\[
\forall s,t\in\mathbb{R}_{\geq0}:\quad\Phi_{t}\circ\Phi_{s}=\Phi_{s+t}.
\]

\begin{defn}\label{def:Stroemung-unter-einer-Funktion}
Let $\left(X,\mathcal{A},\mu\right)$ be a $\sigma$-finite measure space and let $\theta$ be a measure-preserving endomorphism of $\left(X,\mathcal{A},\mu\right)$. Furthermore, let $\varphi:X\rightarrow\mathbb{R}$ be a measurable positive function that satisfies $\inf_{x\in X}\varphi\left(x\right)>0$. One defines a new measure space $\left(\bbar X,\bbar{\mathcal{A}},\bbar{\mu}\right)$ as a subspace of the product $\left(X,\mathcal{A},\mu\right)\times\left(\mathbb{R},\mathcal{B},\lambda\right)$ via
\[
\bbar X\coloneqq\left\{ \left(x,s\right)\in X\times\mathbb{R}\middle|\; 0\leq s<\varphi\left(x\right)\right\} 
\]
and $\bbar{\mathcal{A}}$, $\bbar{\mu}$ as the corresponding restrictions of the product $\sigma$-algebra and product measure.
On $\left(\bbar X,\bbar{\mathcal{A}},\bbar{\mu}\right)$ one defines a semiflow $\Phi$ as follows:
\[
\Phi_{t}\left(x,s\right)\coloneqq\begin{cases}
\left(x,s+t\right) & ,0\leq t<\varphi\left(x\right)-s\\
\left(\theta^{N_{s+t}\varphi\left(x\right)-1}\left(x\right),s+t-S_{N_{t+s}-1}\varphi\left(x\right)\right) & ,\varphi\left(x\right)-s\leq t.
\end{cases}
\]
\end{defn}

The semiflow $\Phi$ will be referred to as the \emph{flow} (also: special flow or suspension flow) with \emph{ceiling function} $\varphi$ and \emph{base transformation} $\theta$. In this paper, the term ceiling function is taken to imply that the function is measurable, positive and bounded away from zero.
 
\begin{fact}
{\em (\cite[p. 89f]{Jacobs:1960})} The $\Phi_{t}$ are measure-preserving. 
\end{fact}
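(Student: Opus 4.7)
The plan is to first reduce the claim to times smaller than $c := \inf_{x} \varphi(x) > 0$ via the semigroup property, and then to verify measure-preservation on this short time window directly, by splitting $\bar X$ into a no-jump and a jump region.

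For the reduction, I would write any $t \ge 0$ as a finite sum $t = t_{1} + \cdots + t_{n}$ with each $t_{i} \in [0, c)$, so that $\Phi_{t} = \Phi_{t_{n}} \circ \cdots \circ \Phi_{t_{1}}$. Since compositions of measure-preserving maps are measure-preserving, it suffices to establish the claim for $t \in [0, c)$. Fix such a $t$ and decompose $\bar X = U \sqcup V$ with $U := \{(x,s) \in \bar X : s + t < \varphi(x)\}$ and $V := \bar X \setminus U$. On $U$ one has $\Phi_{t}(x,s) = (x, s+t)$. On $V$, the inequalities $\varphi(x) \le s + t < \varphi(x) + \varphi(\theta x)$, which hold because $s < \varphi(x)$ and $t < c \le \varphi(\theta x)$, force $N_{s+t}\varphi(x) = 2$, so that $\Phi_{t}(x,s) = (\theta x, s + t - \varphi(x))$.

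Given a measurable set $B \subset \bar X$, I would then compute $\bar\mu(\Phi_{t}^{-1}(B))$ by applying Fubini to the disjoint union $\Phi_{t}^{-1}(B) = (\Phi_{t}^{-1}(B) \cap U) \sqcup (\Phi_{t}^{-1}(B) \cap V)$. In the $U$-integral, the substitution $u := s + t$ and translation invariance of Lebesgue measure on the fiber give $\int \int_{[t, \varphi(x))} \1_{B}(x, u)\, du\, d\mu(x)$. In the $V$-integral, the substitution $u := s + t - \varphi(x)$ followed by $\theta$-invariance of $\mu$ yields $\int \int_{[0, t)} \1_{B}(y, u)\, du\, d\mu(y)$; here $u < t < c \le \varphi(y)$ ensures the region of integration sits inside $\bar X$. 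Summing the two contributions reassembles the full fiber integral over $[0, \varphi(\cdot))$ and hence produces $\bar\mu(B)$.

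The main obstacle, really the only place where care is needed, is the bookkeeping across the jump: the preimage of a point with small fiber coordinate lies in the top slice of the preceding $\theta$-fiber, and one must simultaneously apply the fiber substitution and $\theta$-invariance in the right order to see that this contribution exactly fills in the $[0, t)$-slice that is missing from the translation computation on $U$.
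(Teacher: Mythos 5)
Your argument is correct. Note first that the paper itself offers no proof of this Fact; it is simply cited to Jacobs, so there is nothing internal to compare against -- what you have written is the standard self-contained verification, and it holds up. The reduction to $t\in[0,c)$ with $c=\inf\varphi>0$ is exactly the right move: it guarantees that at most one application of $\theta$ occurs, your identification $N_{s+t}\varphi(x)=2$ on $V$ is justified by $S_{1}\varphi(x)=\varphi(x)\leq s+t<\varphi(x)+\varphi(\theta x)=S_{2}\varphi(x)$, and the two Fubini computations (translation invariance of Lebesgue measure on the fibre over $U$, the substitution plus $\mu\circ\theta^{-1}=\mu$ over $V$) reassemble $\int_{X}\int_{0}^{\varphi(x)}\1_{B}(x,u)\dx{u}\dx{\mu}(x)=\bbar{\mu}(B)$ as claimed; $\sigma$-finiteness of $\mu$ licenses Tonelli throughout. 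The one thing you lean on without proof is the semigroup identity $\Phi_{t_{n}}\circ\cdots\circ\Phi_{t_{1}}=\Phi_{t}$. In the paper this is likewise absorbed into calling $\Phi$ a semiflow, and it is a purely combinatorial computation with $N$ and $S_{n}$ that involves no measure theory, so using it here is legitimate -- but a fully self-contained write-up would include that verification, since without it the reduction step has no foundation.
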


The projections from  $\bbar X$ onto the first and second component are denoted by $\pi_{1}$ and $\pi_{2}$.

\subsection {Shift spaces}
Shift spaces are an important type of base transformation because they can be used to approximate other transformations and can serve as a useful model to derive hypotheses for more general situations.

Let $S$ be a finite or countable set, equipped with the discrete topology. The space $S^{\mathbb{N}}$, equipped with the product topology and the shift map
\[
\theta:S^{\mathbb{N}}\rightarrow S^{\mathbb{N}},\quad\left(x_{i}\right)_{i\in\mathbb{N}}\mapsto\left(x_{i+1}\right)_{i\in\mathbb{N}},
\]
is a topological dynamical system, called a \emph{(onesided) shift}. The set $S$ is called the \emph{alphabet} of the shift.

The topology on $S^{\mathbb{N}}$ is generated by the \emph{cylinder sets}
\[
\left[a_{1},\dots,a_{n}\right]\coloneqq\left\{ \left(x_{i}\right)_{i\in\mathbb{N}}\in S^{\mathbb{N}}\middle|\; \forall1\leq i\leq n:x_{i}=a_{i}\right\} 
\]
 for $n\in\mathbb{N}$ and
 coincides with the topology generated by the metric
\[
d:S^{\mathbb{N}}\times S^{\mathbb{N}}\rightarrow\mathbb{R},\quad\left(x,y\right)\mapsto e^{-\beta\left|x\wedge y\right|}
\]
for an arbitrary $\beta>0$ and 
\[
\left|x\wedge y\right|\coloneqq\max\left\{ n\in\mathbb{N}_{0}\middle|\; \forall1\leq i\leq n:\, x_{i}=y_{i}\right\} .
\]
Cylinder sets of the form $\left[a_{1},\dots,a_{n}\right]$ are called \emph{$n$-cylinders} or cylinders of length $n$. By convention let the cylinder of length $0$ be $X$.

A transition matrix $P\in\mathbb{R} ^{S\times S}$ defines a \emph{subshift} of $\left(S^{\mathbb{N}},\theta\right)$ by restricting $\theta$ to
\[
\Sigma_{P}\coloneqq S_{P}^{\mathbb{N}}\coloneqq\left\{ \left(x_{i}\right)_{i\in\mathbb{N}}\middle|\; \forall i\in\mathbb{N}:p_{x_{i},x_{i+1}}\neq0\right\} \subset S^{\mathbb{N}}.
\]
 The subshift is called \emph{irreducible} if the matrix $P$ is irreducible.

The set of \emph{admissible} words of length $n$ is denoted by
\[
\Sigma_{P}^{n}\coloneqq\left\{ \left(x_{1},\dots,x_{n}\right)\in S^{n}\middle|\; \forall i\in\left\{ 1,\dots,n\right\} :p_{x_{i},x_{i+1}}\neq0\right\} \subset S^{n}
\]
 and 
\[
\Sigma_{P}^{*}\coloneqq\bigcup_{n\in\mathbb{N}}\Sigma_{P}^{n}
\]
denotes the set of admissible words of arbitrary length. The set of $n$-cylinders of a subshift is denoted by $C_{n}$ and contains all $n$-cylinders that come from admissible words.

\begin{fact}
	If the alphabet of the subshift is finite, the space $S_{M}^{\mathbb{N}}$ is compact with respect to the subspace topology induced by $d$.
\end{fact}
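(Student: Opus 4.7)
The plan is to apply Tychonoff's theorem combined with the observation that closed subsets of compact spaces are compact. Since the alphabet $S$ is finite, endowed with the discrete topology it is compact (every subset is both open and closed). By Tychonoff's theorem, the product space $S^{\mathbb{N}}$ is compact in the product topology. The excerpt already states that the product topology coincides with the topology induced by the metric $d$, so $S^{\mathbb{N}}$ is compact with respect to $d$ as well.

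It then remains to verify that $\Sigma_{P} = S_{P}^{\mathbb{N}}$ is a closed subset of $S^{\mathbb{N}}$. To this end I would show that its complement is open. Suppose $x = (x_{i})_{i \in \mathbb{N}} \notin \Sigma_{P}$. By definition there exists some index $i \in \mathbb{N}$ with $p_{x_{i}, x_{i+1}} = 0$. Consider the cylinder set $[x_{1}, \dots, x_{i+1}]$; by construction any $y \in [x_{1}, \dots, x_{i+1}]$ agrees with $x$ in the first $i+1$ coordinates and hence also fails the admissibility condition at position $i$, so $y \notin \Sigma_{P}$. Thus $[x_{1}, \dots, x_{i+1}]$ is an open neighbourhood of $x$ disjoint from $\Sigma_{P}$, proving that $\Sigma_{P}^{\complement}$ is open.

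Being a closed subset of the compact space $S^{\mathbb{N}}$, the shift space $\Sigma_{P}$ is compact, and compactness is inherited by the subspace topology induced by $d$. There is no real obstacle here: the argument is routine, the only potential pitfall being to confirm that the metric topology and the product topology truly agree, which is explicitly stated in the preceding paragraph of the excerpt. Hence the claim follows.
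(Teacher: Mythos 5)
Your proof is correct: the paper states this as a Fact without proof, and your argument (finite discrete alphabet is compact, Tychonoff gives compactness of $S^{\mathbb{N}}$, the product and metric topologies agree, and $\Sigma_{P}$ is closed since its complement is a union of cylinder sets) is precisely the standard justification being implicitly invoked. Note only the minor point that the Fact's notation $S_{M}^{\mathbb{N}}$ is evidently a typo for $S_{P}^{\mathbb{N}}=\Sigma_{P}$, which you have correctly interpreted.
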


If the transition matrix $P$ defining a subshift on a finite alphabet $S$ is non-negative, irreducible and row-stochastic, it defines a \emph{Markov shift}. In addition to all the properties of a subshift, the system is also equipped with an invariant probability measure $\mu$ that can be derived from the unique positive probability vector $\pi\in\mathbb{R}^{S}$ with $\pi P=\pi$ whose existence can be concluded from the Perron-Frobenius theorem.   The measure $\mu$, which is a Borel measure on $\Sigma_{P}$, is defined by 
\[
\mu\left(\left[a_{1},\dots,a_{n}\right]\right)\coloneqq\pi_{a_{1}}\cdot\prod_{i=1}^{n-1}p_{a_{i},a_{i+1}}
\]
and is invariant with respect to $\theta$.
Note that the irreducibility of the transition matrix $P$ implies that a Markov shift as defined here is automatically ergodic.

A natural choice for well-behaved ceiling functions over Markov shifts are those that are constant on $n$-cylinders:
\[
Z_{n}\coloneqq\left\{ \varphi:X\rightarrow\mathbb{R}\middle|\; \forall C\in C_{n}:\varphi\textrm{ is constant on }C\right\}.
\]
A function $\varphi:X\rightarrow\mathbb{R}$ is called a \emph{cylinder function (of order $n\in \mathbb{N}$)} if  $\varphi\in Z_{n}$. Furthermore, a cylinder function $\varphi$ is called arithmetic if there exists a $\lambda\in\mathbb{R}$ such that $\varphi$ only takes values in the lattice $\lambda\mathbb{Z}$; the function is then also referred to as $\lambda$-arithmetic.

Considering a semiflow over a Markov shift under a $\lambda$-arithmetic ceiling function $\varphi=\lambda\cdot\sum_{C\in C_{n}}k_{C}\cdot\ind_{C}$ with $k_{C}\in\mathbb{N}$, one can partition the space $\bbar X$ into the sets $C\times\left[\left(k-1\right)\lambda,k\lambda\right)$ with $0<k\leq k_{C}$ and $C\in C_{n}$. For these sets that are closely related to cylinder sets, the notation
\[
\bbar C_{n}\coloneqq\left\{ C\times\left[\left(k-1\right)\lambda,k\lambda\right)\middle|\; 0<k\leq k_{C},\, C\in C_{n}\right\} 
\]
shall be used. Analogously to cylinder functions, one defines
\[
\bbar Z_{n}\coloneqq\left\{ f:\bbar X\rightarrow\mathbb{R}\middle|\; \forall\bbar C\in\bbar C_{n}:f\textrm{ ist konstant auf }\bbar C\right\}.
\]

\section{The escape rate and its basic properties}\label{chap:AR_Grundlagen}

The most general setting for escape rates for discrete step dynamical systems is given by a measure-preserving endomorphism  $\theta$ of a probability space $\left(X,\mu\right)$. A \emph{hole} is a measurable set $A\subset X$ for which 
\[
\bigcup_{n\geq0}\theta^{-n}\left(A\right)=X
\]
holds almost surely. If the system $\left(X,\mu,\theta\right)$ -- which will be referred to as the \emph{closed system} -- is ergodic, then every set of positive measure fulfils this condition \cite[Theorem 1.5 (iii)]{Walters:2000}. The system together with the hole $A$ will be referred to as the \emph{open system}.

\begin{defn}
	\label{def:AR_diskret}Let $A\subset X$ be a hole. The \emph{upper (lower) escape rate}
	$\overline{\rho}$ ($\underline{\rho}$) through the hole $A$ is defined as
	\begin{align*}
	\overline{\rho}\left(A\right) & \coloneqq\limsup_{n\rightarrow\infty}-\frac{1}{n}\log\mu\left(\left\{ x\in X\middle|\; \forall0\leq k\leq n:\theta^{k}\left(x\right)\notin A\right\} \right),\\
	\underline{\rho}\left(A\right) & \coloneqq\liminf_{n\rightarrow\infty}-\frac{1}{n}\log\mu\left(\left\{ x\in X\middle|\; \forall0\leq k\leq n:\theta^{k}\left(x\right)\notin A\right\} \right).
	\end{align*}
	If both values coincide, one refers to $\rho\left(A\right)\coloneqq\overline{\rho}\left(A\right)=\underline{\rho}\left(A\right)$ as the \emph{escape rate} of the hole $A$.
\end{defn}

\begin{rem}
	\label{rem:AR-Alternative-Mengenfolge}\label{rem:ARdiskret_reellerParameter}The set $\bigl\{ x\in X|\forall0\leq k\leq n:\theta^{k}\left(x\right)\notin A\bigr\}$ can be replaced by $\bigl\{ x\in X|\forall0\leq k<n:\theta^{k}\left(x\right)\notin A\bigr\}=\left\{ x\in X\middle|\; N_{A}\left(x\right)\geq n\right\} $ without altering $\overline{\rho}$ or $\underline{\rho}$. Also one can use a real parameter $t$ instead of natural numbers $n$.
\end{rem}

The escape rate for a (semi\nobreakdash-)flow $\Phi$ under a function $\varphi$ with base transformation $\theta:X\rightarrow X$ can be defined analogously to the discrete case. Here a \emph{hole} is a measurable set $\bbar A\subset\bbar X$ for which
\[
\bigcup_{t\geq0}\Phi_{t}^{-1}\left(\bbar A\right) = \bbar X
\]
 holds almost surely and 
\[
\bigcup_{t\in\left[0,\tau\right]}\Phi_{t}^{-1}\left(\bbar A\right)
\]
 is measurable for all $\tau\in\mathbb{R}_{\geq0}$.

 The terms closed and open system are used analogously to the discrete case.
\begin{defn}
	\label{def:AR_kontinuierlich}Let $\bbar A\subset\bbar X$ be a hole. The \emph{upper (lower) escape rate}
	$\overline{\rho}$ ($\underline{\rho}$) through the hole $\bbar A$ is defined as
	\begin{align*}
	\overline{\rho}\left(\bbar A,\varphi\right) & \coloneqq\limsup_{t\rightarrow\infty}-\frac{1}{t}\log\bbar{\mu}\left(\left\{ \left(x,s\right)\in\bbar X\middle|\; \forall\tau\in\left[0,t\right]:\Phi_{\tau}\left(x,s\right)\notin\bbar A\right\} \right),\\
	\underline{\rho}\left(\bbar A,\varphi\right) & \coloneqq\liminf_{t\rightarrow\infty}-\frac{1}{t}\log\bbar{\mu}\left(\left\{ \left(x,s\right)\in\bbar X\middle|\; \forall\tau\in\left[0,t\right]:\Phi_{\tau}\left(x,s\right)\notin\bbar A\right\} \right).
	\end{align*}
	If both values coincide, one refers to $\rho\left(\bbar A,\varphi\right)\coloneqq\overline{\rho}\left(\bbar A,\varphi\right)=\underline{\rho}\left(\bbar A,\varphi\right)$ as the \emph{escape rate} of the hole $\bbar A$.
\end{defn}
\begin{rem}
	Rescaling the measure $\bbar{\mu}$ with a factor $\lambda\in\mathbb{R}_{>0}$ does not change the escape rate.
\end{rem}

The basic properties of escape rates for discrete systems (see for example \cite[Proposition 2.3.2]{Bunimovich:2011}) also hold for (semi\nobreakdash-)flows. Additionally, if one considers bounded ceiling functions, the escape rate depends only on the shadow of the hole in the base.
\begin{prop}\label{prop:EigenschaftenAR_Mengen}
	Let $\bbar A,\bbar B\subset\bbar X$ be holes and let $\varphi:X\rightarrow\mathbb{R}$ be a ceiling function. Then the following properties hold:
	\begin{enumerate}
	\item \label{enu:EigenschaftenAR_Mengen_Monotonie}If $\bbar A\subset\bbar B$,
	then $\rho\left(\bbar A,\varphi\right)\leq\rho\left(\bbar B,\varphi\right)$.
	\item \label{enu:EigenschaftenAR_Mengen_Urbild}For all $r\in\mathbb{R}_{\geq0}$
	one has $\rho\left(\bbar A,\varphi\right)=\rho\left(\Phi_{r}^{-1}\left(\bbar A\right),\varphi\right)$.
	\item \label{enu:EigenschaftenAR_Mengen_Urbildvereinigung}For all $r\in\mathbb{R}_{\geq0}$
	one has $\rho\left(\bbar A,\varphi\right) = \rho\bigl(\bigcup_{\tau\in\left[0,r\right]}\Phi_{\tau}^{-1}\left(\bbar A\right),\varphi\bigr)$.
	\item \label{enu:EigenschaftenAR_Mengen_Projektion}If $\varphi$ is bounded and $\pi_{1}\left(\bbar A\right)$ is measurable, then 
	\[\rho\left(\pi_{1}^{-1}\left(\pi_{1}\left(\bbar A\right)\right),\varphi\right)=\rho\left(\bbar A,\varphi\right).\]
	\end{enumerate}
\end{prop}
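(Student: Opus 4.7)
For parts (1)--(3) I would reason directly with the non-escape set $E_t(\bbar A):=\{(x,s)\in\bbar X : \Phi_\tau(x,s)\notin \bbar A\text{ for all }\tau\in[0,t]\}$, mirroring the standard discrete-time arguments (compare \cite{Bunimovich:2011}). Part (1) follows since $\bbar A\subset\bbar B$ yields $E_t(\bbar B)\subset E_t(\bbar A)$, and applying $-\frac{1}{t}\log\bbar\mu$ reverses this inclusion into the desired inequality. For (2), the semiflow identity $\Phi_{\tau+r}=\Phi_\tau\circ\Phi_r$ gives $E_t(\Phi_r^{-1}(\bbar A))=\Phi_r^{-1}(E_t(\bbar A))$, whence measure-preservation of $\Phi_r$ yields equal measures and thus equal rates. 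Part (3) unrolls analogously to $E_t\bigl(\bigcup_{\tau\in[0,r]}\Phi_\tau^{-1}(\bbar A)\bigr)=E_{t+r}(\bbar A)$; dividing $-\log\bbar\mu$ by $t$ instead of $t+r$ contributes only the factor $(t+r)/t\to 1$, leaving the escape rate unchanged.

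For part (4), write $\bbar A':=\pi_1^{-1}(\pi_1(\bbar A))$. The inequality $\rho(\bbar A)\le\rho(\bbar A')$ is immediate from (1). The converse is where boundedness of $\varphi$ plays its role. The geometric key is that if the forward orbit of $(x,s)\notin\bbar A'$ enters $\bbar A'$ at some time $\tau>0$, it must do so by jumping to the bottom of a fiber, i.e.\ $\Phi_\tau(x,s)=(y,0)$ with $y\in\pi_1(\bbar A)$; traversing fiber $y$ upward, the orbit then meets some $\bbar A$-height $s_0<\varphi(y)\le r:=\|\varphi\|_\infty$. Setting $s_{\max}(x):=\sup\{s_0:(x,s_0)\in\bbar A\}$ for $x\in\pi_1(\bbar A)$ and $B:=\{(x,s)\in\bbar A':s>s_{\max}(x)\}$, this yields (for $t\ge r$) the inclusion $E_t(\bbar A)\setminus B\subset E_{t-r}(\bbar A')$: points outside $B$ either start outside $\bbar A'$ (and then avoid it in $[0,t-r]$, since any entry would trigger an $\bbar A$-hit within additional time $\le r$) or start in $\bbar A'$ below some $\bbar A$-height (and then hit $\bbar A$ in time $\le r$, contradicting $E_t$).

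The main obstacle I anticipate is controlling the ``bad'' set $B\cap E_t(\bbar A)$, whose elements start in $\bbar A'$ above every $\bbar A$-height in their initial fiber. I would flow such an $(x,s)$ by $\varphi(x)-s\in[0,r]$ to reach $(\theta x,0)$, which must lie outside $\bbar A'$ (otherwise fiber $\theta x$ would force an $\bbar A$-hit within an additional time $\le r$, contradicting $(x,s)\in E_t(\bbar A)$ once $t>2r$). Hence $(\theta x,0)\in E_{t-r}(\bbar A)\cap(\bbar A')^c\subset E_{t-2r}(\bbar A')$ by the previous step. Integrating the indicator of $B\cap E_t(\bbar A)$ in $s$ over each fiber (of width $\le r$) and in $x$ against $\mu$, then using $\theta$-invariance of $\mu$ together with $\inf\varphi>0$ to pass from $\mu(\{y:(y,0)\in E_{t-2r}(\bbar A')\})$ to a quantity of order $\bbar\mu(E_{t-3r}(\bbar A'))$, gives $\bbar\mu(B\cap E_t(\bbar A))\le C\bbar\mu(E_{t-3r}(\bbar A'))$. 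Combined with the previous inclusion, $\bbar\mu(E_t(\bbar A))\le(1+C)\bbar\mu(E_{t-3r}(\bbar A'))$, so applying $-t^{-1}\log$ as $t\to\infty$ yields $\rho(\bbar A)\ge\rho(\bbar A')$ and completes the equality.
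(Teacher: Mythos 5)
Your treatment of (1)--(3) is correct and is exactly what the paper dismisses as ``elementary calculations''. For (4) your argument is also correct, but it takes a genuinely different and considerably longer route than the paper's. The paper gets the hard inequality in one line by combining (1)--(3) with the single orbit inclusion $\Phi_{\sup\varphi}^{-1}\left(\pi_{1}^{-1}\left(\pi_{1}\left(\bbar A\right)\right)\right)\subset\bigcup_{\tau\in\left[0,2\sup\varphi\right]}\Phi_{\tau}^{-1}\left(\bbar A\right)$: pre-composing with $\Phi_{\sup\varphi}^{-1}$ shifts everything forward in time, so that every point of the fattened hole can be reached both from below (within its own fiber) and from above (via the bottom of the fiber, entered at most $\sup\varphi$ earlier), and parts (2) and (3) then convert the inclusion directly into the inequality of rates. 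You instead re-derive the measure estimates by hand, splitting $E_{t}(\bbar A)$ along the set $B$ of points sitting above every $\bbar A$-height of their fiber and pushing $B$ forward under $\theta$; this buys nothing extra here, but it does make explicit where $\inf\varphi>0$ and the $\theta$-invariance of $\mu$ enter, which the paper's argument hides inside (2) and (3). Two small technical points in your version: with $B$ defined by the strict inequality $s>s_{\max}(x)$, the points $(x,s_{\max}(x))$ with $(x,s_{\max}(x))\notin\bbar A$ fall through both cases of your inclusion $E_{t}(\bbar A)\setminus B\subset E_{t-r}(\bbar A')$ --- harmless, since this graph is $\bbar\mu$-null, but you should either define $B$ with $\geq$ or discard a null set explicitly; and $B$ itself need not be measurable, so the final estimate should be phrased via the measurable supersets $E_{t-r}(\bbar A')$ and $\pi_{1}^{-1}\bigl(\theta^{-1}\{y:(y,0)\in E_{t-2r}(\bbar A')\}\bigr)$ whose union contains $E_{t}(\bbar A)$, rather than via $\bbar\mu(B\cap E_{t}(\bbar A))$.
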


\begin{proof}
	Properties \eqref{enu:EigenschaftenAR_Mengen_Monotonie}, \eqref{enu:EigenschaftenAR_Mengen_Urbild} and \eqref{enu:EigenschaftenAR_Mengen_Urbildvereinigung} follow from elementary calculations. For property \eqref{enu:EigenschaftenAR_Mengen_Projektion}, one obtains 
	\[
	\rho\left(\pi_{1}^{-1}\left(\pi_{1}\left(\bbar A\right)\right),\varphi\right)\geq\rho\left(\bbar A,\varphi\right)
	\]
	from \eqref{enu:EigenschaftenAR_Mengen_Monotonie}. Due to $\varphi$ being bounded one obtains 
	\[
	\Phi_{\sup\varphi}^{-1}\left(\pi_{1}^{-1}\left(\pi_{1}\left(\bbar A\right)\right)\right)\subset\bigcup_{\tau\in\left[0,2\cdot\sup\varphi\right]}\Phi_{\tau}^{-1}\left(\bbar A\right).
	\]
	From this the opposite inequality for the escape rate follows via \eqref{enu:EigenschaftenAR_Mengen_Monotonie}, \eqref{enu:EigenschaftenAR_Mengen_Urbild} and \eqref{enu:EigenschaftenAR_Mengen_Urbildvereinigung}.
\end{proof}
\begin{rem}
	\label{rem:Loch-Fluss-Basis_reicht}Since in the case of bounded ceiling functions the escape rate is already determined by the shadow of the hole, one simplifies the notation by letting $\rho\left(A,\varphi\right)$ denote  $\rho\bigl(\pi_{1}^{-1}\left(A\right),\varphi\bigr)$ and referring to $A$ as the hole.
\end{rem}

\begin{rem}
	\label{rem:VerschiedeneARtrotzBeschraenktheit}Let $A\subset X$ be a hole such that the upper and lower escape rates for the hole $A$ with respect to the base transformation coincide and are positive and finite. Boundedness of the ceiling function is not sufficient to ensure that upper and lower escape rates for the corresponding special flow with hole $A$ coincide.
	
	One can construct a ceiling function $\varphi$ that exhibits such behaviour by choosing a sequence of positive real numbers $\left(r_{n}\right)_{n\in\mathbb{N}}\in\mathbb{R}_{>0}^{\mathbb{N}}$ which is bounded and bounded away from zero and for which the sequence $\left(\sum_{n=1}^{m}r_{n}/m\right)_{m\in\mathbb{N}}$ has more than one accumulation point, and letting
	\[
	\varphi\coloneqq\ind_{A\cup\left(\bigcup_{n\geq0}\theta^{-n}\left(A\right)\right)^{\complement}}+\sum_{n\in\mathbb{N}}r_{n}\cdot\ind_{\left\{ N_{A}=n\right\} }.
	\]
\end{rem}

Remark~\ref{rem:VerschiedeneARtrotzBeschraenktheit} illustrates that in order to determine the existence of the escape rate in general it does not suffice to consider an arbitrary subsequence of the limit process. It is sufficient though to consider an evenly spaced subsequence.

\begin{lem}
	\label{lem:AR_guteTeilfolge}Let $\varphi:X \rightarrow \mathbb{R}$ be a ceiling function. Furthermore, let $\bbar A\subset\bbar X$ be a hole such that there exists a $\lambda\in\mathbb{R}_{>0}$ for which
	\[
	\alpha\coloneqq\lim_{n\rightarrow\infty}-\frac{1}{n\lambda}\log\bbar{\mu}\left(\left\{ \left(x,s\right)\in\bbar X\middle|\; \forall\tau\in\left[0,n\lambda\right]:\Phi_{\tau}\left(x,s\right)\notin\bbar A\right\} \right)
	\]
	 exists. Then for any sequence $\left(t_{k}\right)_{k\in\mathbb{N}}\rightarrow\infty$ of positive real numbers the limit 
	\[
	\lim_{k\rightarrow\infty}-\frac{1}{t_{k}}\log\bbar{\mu}\left(\left\{ \left(x,s\right)\in\bbar X\middle|\; \forall\tau\in\left[0,t_{k}\right]:\Phi_{\tau}\left(x,s\right)\notin\bbar A\right\} \right)
	\]
	 exists and coincides with $\alpha$. Hence, the escape rate $\rho\left(\bbar A,\varphi\right)$ exists and equals $\alpha$.
\end{lem}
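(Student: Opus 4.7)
The plan is to exploit the monotonicity of the survival function
\[
	M(t) \coloneqq \bbar{\mu}\bigl(\{(x,s)\in\bbar X \mid \forall \tau\in[0,t]: \Phi_\tau(x,s)\notin \bbar A\}\bigr)
\]
and sandwich an arbitrary divergent sequence $(t_k)$ between consecutive points of the lattice $\lambda \mathbb{N}$. Since the set defining $M(t)$ is obviously decreasing in $t$ (a larger $t$ imposes strictly more constraints on the orbit), the function $M$ is monotonically decreasing, and consequently $t \mapsto -\log M(t)$ is monotonically nondecreasing (with values in $[0,+\infty]$).

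Given a sequence $t_k \to \infty$, I would choose $n_k \in \mathbb{N}$ such that $n_k\lambda \leq t_k < (n_k+1)\lambda$; clearly $n_k \to \infty$. Monotonicity of $-\log M$ then yields
\[
	\frac{-\log M(n_k\lambda)}{t_k} \;\leq\; \frac{-\log M(t_k)}{t_k} \;\leq\; \frac{-\log M((n_k+1)\lambda)}{t_k}.
\]
I rewrite the lower bound as
\[
	\frac{-\log M(n_k\lambda)}{n_k\lambda}\cdot\frac{n_k\lambda}{t_k}
\]
and the upper bound as
\[
	\frac{-\log M((n_k+1)\lambda)}{(n_k+1)\lambda}\cdot\frac{(n_k+1)\lambda}{t_k}.
\]
The first factor of each product tends to $\alpha$ by hypothesis, while the second factor tends to $1$, since both $n_k\lambda/t_k$ and $(n_k+1)\lambda/t_k$ are squeezed between $n_k/(n_k+1)$ and $(n_k+1)/n_k$. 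The squeeze theorem delivers $-\frac{1}{t_k}\log M(t_k) \to \alpha$. Applying this to the two specific sequences realising $\limsup$ and $\liminf$ in Definition~\ref{def:AR_kontinuierlich} shows that $\bbar{\rho}(\bbar A,\varphi) = \underline{\rho}(\bbar A,\varphi) = \alpha$, so the escape rate exists.

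The only subtle point is the handling of $\alpha \in \{0,+\infty\}$: when $\alpha = +\infty$, the lower bound already diverges since $n_k\lambda/t_k \geq n_k/(n_k+1) \to 1$; when $\alpha = 0$, the upper bound still tends to $0$ because $(n_k+1)\lambda/t_k$ is bounded. Thus the argument goes through in $[0,+\infty]$ without modification, and this is essentially the only place where one must be mindful. No other obstacle appears, since no property of $\Phi$, $\bbar{\mu}$, or $\varphi$ beyond the monotonicity of $M$ is used.
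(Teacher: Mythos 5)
Your proof is correct and follows essentially the same route as the paper: both sandwich $t_k$ between consecutive lattice points $n_k\lambda$ and $(n_k+1)\lambda$, use the monotonicity of the survival function to compare $-\log M(t_k)$ with its values at those lattice points, and conclude by a squeeze since the ratio of the normalising factors tends to $1$. The handling of the extended-value cases $\alpha\in\{0,+\infty\}$ is a welcome extra precision but does not change the substance.
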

\begin{proof}
	Define the sequence $\left(m_{k}\right)_{k\in\mathbb{N}}$ by $m_{k}\coloneqq\left\lceil t_{k}/\lambda\right\rceil$. Then
	\begin{align*}
	 & -\frac{1}{m_{k}\lambda}\log\bbar{\mu}\left(\left\{ \left(x,s\right)\in\bbar X\middle|\; \forall\tau\in\left[0,\left(m_{k}-1\right)\cdot\lambda\right]:\Phi_{\tau}\left(x,s\right)\notin\bbar A\right\} \right)\\
	\leq & -\frac{1}{t_{k}}\log\bbar{\mu}\left(\left\{ \left(x,s\right)\in\bbar X\middle|\; \forall\tau\in\left[0,t_{k}\right]:\Phi_{\tau}\left(x,s\right)\notin\bbar A\right\} \right)\\
	\leq & -\frac{1}{\left(m_{k}-1\right)\cdot\lambda}\log\bbar{\mu}\left(\left\{ \left(x,s\right)\in\bbar X\middle|\; \forall\tau\in\left[0,m_{k}\lambda\right]:\Phi_{\tau}\left(x,s\right)\notin\bbar A\right\} \right).
	\end{align*}
	Since the sequences given by the lower and upper estimates both converge towards $\alpha$, so does the enclosed sequence.
\end{proof}
\begin{rem}
	\label{rem:AR_zu_DachFunktion1_gleich_Basis}Choosing the step size $\lambda$ as $1$ in Lemma~\ref{lem:AR_guteTeilfolge}, one concludes that the escape rate $\rho\left(A\right)$ for a hole $A\subset X$ with respect to the base transformation and the corresponding escape rate $\rho\left(A,\ind\right)$ for the special flow with ceiling function $\ind$ take the same value.
\end{rem}

The escape rate with respect to the flow can be phrased without explicitly referring to the space $\bbar X$ and its measure $\bbar \mu$.
\begin{lem}
	\label{lem:AR_Nur-NA-Nt}Let $\varphi:X \rightarrow \mathbb{R}$ be a bounded ceiling function. Furthermore, let $A\subset X$ be a hole such that the escape rate $\rho\left(A,\varphi\right)$ exists. Then 
	\begin{align*}
	\rho\left(A,\varphi\right) & =\lim_{t\rightarrow\infty}-\frac{1}{t}\log\bbar{\mu}_{\varphi}\left(\left\{ \left(x,s\right)\in\bbar X_{\varphi}\middle|\; N_{A}\left(x\right)\geq N_{t}^{\varphi}\left(x\right)\right\} \right)\\
	 & =\lim_{t\rightarrow\infty}-\frac{1}{t}\log\mu\left(\left\{ x\in X\middle|\; N_{A}\left(x\right)\geq N_{t}^{\varphi}\left(x\right)\right\} \right).
	\end{align*}
\end{lem}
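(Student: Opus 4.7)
The plan is to reduce both equalities to a purely base-space computation by tracking which iterates of $\theta$ are visited along a finite piece of flow orbit. By Remark~\ref{rem:Loch-Fluss-Basis_reicht} the hole $A\subset X$ stands for $\pi_{1}^{-1}(A)\subset\bbar X$, so that
\[
B_{t}\coloneqq\left\{(x,s)\in\bbar X\,\middle|\,\forall \tau\in[0,t]\colon \Phi_{\tau}(x,s)\notin\pi_{1}^{-1}(A)\right\}
\]
is exactly the set whose $\bbar{\mu}$-measure features in Definition~\ref{def:AR_kontinuierlich}, and the first task is to re-express $B_{t}$ through $N_{A}$ and $N_{t}^{\varphi}$.

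First I would unfold Definition~\ref{def:Stroemung-unter-einer-Funktion}: as $\tau$ runs through $[0,t]$, the projection $\pi_{1}(\Phi_{\tau}(x,s))$ assumes precisely the values $x,\theta x,\dots,\theta^{k^{*}(x,s,t)}x$, where $k^{*}(x,s,t)=N_{s+t}^{\varphi}(x)-1$ (this is $0$ in the boundary case $s+t<\varphi(x)$, where no jump occurs). Consequently the orbit segment avoids $\pi_{1}^{-1}(A)$ if and only if $\theta^{k}x\notin A$ for every $k\in\{0,\dots,N_{s+t}^{\varphi}(x)-1\}$, i.e.\ if and only if $N_{A}(x)\geq N_{s+t}^{\varphi}(x)$. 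This gives the identity
\[
B_{t}=\bigl\{(x,s)\in\bbar X\,\big|\,N_{A}(x)\geq N_{s+t}^{\varphi}(x)\bigr\}.
\]

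Since $0\leq s<\varphi(x)\leq\sup\varphi$, the map $r\mapsto N_{r}^{\varphi}(x)$ is non-decreasing, whence $N_{t}^{\varphi}(x)\leq N_{s+t}^{\varphi}(x)\leq N_{t+\sup\varphi}^{\varphi}(x)$. Setting $E_{c}\coloneqq\{x\in X\mid N_{A}(x)\geq N_{c}^{\varphi}(x)\}$ and $\widetilde E_{c}\coloneqq(E_{c}\times\mathbb{R})\cap\bbar X$, I obtain the sandwich $\widetilde E_{t+\sup\varphi}\subset B_{t}\subset \widetilde E_{t}$. A Fubini computation yields $\bbar{\mu}(\widetilde E_{c})=\int_{X}\varphi\cdot\ind_{E_{c}}\dx\mu$, and the assumption $0<\inf\varphi\leq\sup\varphi<\infty$ produces
\[
(\inf\varphi)\cdot\mu(E_{t+\sup\varphi})\leq\bbar{\mu}(B_{t})\leq(\sup\varphi)\cdot\mu(E_{t}).
\]

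Finally I would apply $-\tfrac{1}{t}\log$ across this sandwich. The multiplicative constants $\inf\varphi$, $\sup\varphi$ contribute vanishing terms $-\tfrac{1}{t}\log c\to 0$, while the additive shift $t\mapsto t+\sup\varphi$ is absorbed via $-\tfrac{1}{t}\log f(t+c)=\tfrac{t+c}{t}\cdot\bigl(-\tfrac{1}{t+c}\log f(t+c)\bigr)$, which has the same limit as $-\tfrac{1}{t}\log f(t)$ whenever that limit exists. Since $\rho(A,\varphi)$ is assumed to exist, the two bounds force both $-\tfrac{1}{t}\log\bbar{\mu}(B_{t})$ and $-\tfrac{1}{t}\log\mu(E_{t})$ to converge to $\rho(A,\varphi)$, establishing the $\bbar{\mu}$- and $\mu$-versions simultaneously. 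The main obstacle is the orbit bookkeeping in the second paragraph, especially the handling of the boundary case $s+t<\varphi(x)$; once the identification $B_{t}=\{N_{A}\geq N_{s+t}^{\varphi}\}$ is secured, the remaining squeeze is a routine argument in the spirit of Lemma~\ref{lem:AR_guteTeilfolge}.
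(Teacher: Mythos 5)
Your proposal is correct and follows essentially the same route as the paper: identify the survivor set with $\{N_{A}\geq N_{s+t}^{\varphi}\}$, squeeze via $N_{t}^{\varphi}\leq N_{s+t}^{\varphi}\leq N_{t+\sup\varphi}^{\varphi}$, and pass from $\bbar{\mu}$ to $\mu$ using the bounds $\inf\varphi$ and $\sup\varphi$. The orbit bookkeeping and the absorption of the additive shift $t\mapsto t+\sup\varphi$ are handled exactly as in the paper's proof.
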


\begin{proof}
Let $\bbar X_{\varphi},\bbar X_{\ind}$ and $\bbar{\mu}_{\varphi},\bbar{\mu}_{\ind}$
denote the spaces and measures that belong to the flows corresponding to the ceiling functions $\varphi$ and $\ind$ respectively. 

The first equality follows from 
\begin{align*}
\rho\left(A,\varphi\right)
 & =\lim_{t\rightarrow\infty}-\frac{1}{t}\log\bbar{\mu}_{\varphi}\left(\left\{ \left(x,s\right)\in\bbar X_{\varphi}\middle|\; \forall\tau\in\left[0,t\right]:\Phi_{\tau}^{\varphi}\left(x,s\right)\notin \pi_{1,\varphi}^{-1}\left(A\right)\right\} \right)\\
 & =\lim_{t\rightarrow\infty}-\frac{1}{t}\log\bbar{\mu}_{\varphi}\left(\left\{ \left(x,s\right)\in\bbar X_{\varphi}\middle|\; N_{A}\geq N_{t+s}^{\varphi}\right\} \right)\\
 & \leq\lim_{t\rightarrow\infty}-\frac{1}{t}\log\bbar{\mu}_{\varphi}\left(\left\{ \left(x,s\right)\in\bbar X_{\varphi}\middle|\; N_{A}\geq N_{t+\sup\varphi}^{\varphi}\right\} \right)\\
 & =\lim_{t\rightarrow\infty}-\frac{1}{t-\sup\varphi}\bbar{\mu}_{\varphi}\left(\left\{ \left(x,s\right)\in\bbar X_{\varphi}\middle|\; N_{A}\geq N_{t}^{\varphi}\right\} \right)\\
 & =\lim_{t\rightarrow\infty}-\frac{1}{t}\log\bbar{\mu}_{\varphi}\left(\left\{ \left(x,s\right)\in\bbar X_{\varphi}\middle|\; N_{A}\geq N_{t}^{\varphi}\right\} \right)\\
 & \leq\lim_{t\rightarrow\infty}-\frac{1}{t}\log\bbar{\mu}_{\varphi}\left(\left\{ \left(x,s\right)\in\bbar X_{\varphi}\middle|\; N_{A}\geq N_{t+s}^{\varphi}\right\} \right)\\
 & =\rho\left(A,\varphi\right).
\end{align*}
 The second equality is a consequence of 
\begin{align*}
\rho\left(A,\varphi\right) & =\lim_{t\rightarrow\infty}-\frac{1}{t}\log\bbar{\mu}_{\varphi}\left(\left\{ \left(x,s\right)\in\bbar X_{\varphi}\middle|\; N_{A}\geq N_{t}^{\varphi}\right\} \right)\\
 & \leq\lim_{t\rightarrow\infty}-\frac{1}{t}\log\left(\inf\varphi\cdot\bbar{\mu}_{\ind}\left(\left\{ \left(x,s\right)\in\bbar X_{\ind}\middle|\; N_{A}\geq N_{t}^{\varphi}\right\} \right)\right)\\
 & =\lim_{t\rightarrow\infty}-\frac{1}{t}\log\left(\bbar{\mu}_{\ind}\left(\left\{ \left(x,s\right)\in\bbar X_{\ind}\middle|\; N_{A}\geq N_{t}^{\varphi}\right\} \right)\right)\\
 & =\lim_{t\rightarrow\infty}-\frac{1}{t}\log\left(\sup\varphi\cdot\bbar{\mu}_{\ind}\left(\left\{ \left(x,s\right)\in\bbar X_{\ind}\middle|\; N_{A}\geq N_{t}^{\varphi}\right\} \right)\right)\\
 & \leq\lim_{t\rightarrow\infty}-\frac{1}{t}\log\left(\bbar{\mu}_{\varphi}\left(\left\{ \left(x,s\right)\in\bbar X_{\varphi}\middle|\; N_{A}\geq N_{t}^{\varphi}\right\} \right)\right)\\
 & =\rho\left(A,\varphi\right)
\end{align*}
 and 
\begin{align*}
 \bbar{\mu}_{\ind}\left(\left\{ \left(x,s\right)\in\bbar X_{\ind}\middle|\; N_{A}\geq N_{t}^{\varphi}\right\} \right)
= \mu\left(\left\{ x\in X\middle|\; N_{A}\geq N_{t}^{\varphi}\right\} \right).
\end{align*}

\end{proof}

Whereas Proposition~\ref{prop:EigenschaftenAR_Mengen} describes how altering the hole affects the escape rate, the following proposition describes basic properties of the escape rate's dependency on the ceiling function.
\begin{prop}\label{prop:EigenschaftenAR_Dach}
	Let $\varphi,\psi:X\rightarrow\mathbb{R}$ be bounded ceiling functions. Furthermore, fix a hole $A\subset X$ for which the escape rates $\rho\left(A,\varphi\right)$, $\rho\left(A,\psi\right)$ exist.
	\begin{enumerate}
	\item \label{enu:EigenschaftenAR_Dach_Monotonie}If $\varphi\leq\psi$, then $\rho\left(A,\varphi\right)\geq\rho\left(A,\psi\right)$.
	\item \label{enu:EigenschaftenAR_Dach_Skalar}For all $\lambda\in\mathbb{R}_{>0}$ one has $\rho\left(A,\lambda\varphi\right)=\lambda^{-1}\rho\left(A,\varphi\right)$.
	\item \label{enu:EigenschaftenAR_Dach_Coboundary}If there is a bounded, measurable function  $\chi:X\rightarrow\mathbb{R}$ such that $\psi=\varphi+\chi\circ\theta-\chi$, then $\rho\left(A,\psi\right)=\rho\left(A,\varphi\right)$.
	\end{enumerate}
\end{prop}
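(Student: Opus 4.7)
The plan is to reduce all three claims to statements about the hitting time $N_A$ and the return level $N_t^\varphi$ using Lemma~\ref{lem:AR_Nur-NA-Nt}, so that the escape rate reads
\[
\rho(A,\varphi) = \lim_{t\to\infty} -\frac{1}{t}\log \mu\bigl(\{x \in X \mid N_A(x) \geq N_t^\varphi(x)\}\bigr).
\]
The comparison of $N_t^\varphi$ and $N_t^\psi$ under the respective hypotheses then translates directly into comparisons of the sets whose measures appear in this limit.

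For part~\eqref{enu:EigenschaftenAR_Dach_Monotonie}, I would first observe that $\varphi \leq \psi$ implies $S_n\varphi(x) \leq S_n\psi(x)$ for every $x$ and $n$, so the set of $n$ with $S_n\psi(x) > t$ contains the set of $n$ with $S_n\varphi(x) > t$; consequently $N_t^\psi(x) \leq N_t^\varphi(x)$. Hence $\{N_A \geq N_t^\varphi\} \subseteq \{N_A \geq N_t^\psi\}$, the measure of the former is no larger, and passing $-(1/t)\log(\cdot)$ and $t\to\infty$ through Lemma~\ref{lem:AR_Nur-NA-Nt} yields $\rho(A,\varphi) \geq \rho(A,\psi)$. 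For part~\eqref{enu:EigenschaftenAR_Dach_Skalar}, the key identity is $S_n(\lambda\varphi)(x) = \lambda\, S_n\varphi(x)$, which gives $N_t^{\lambda\varphi}(x) = N_{t/\lambda}^\varphi(x)$. Substituting this in Lemma~\ref{lem:AR_Nur-NA-Nt} and substituting $s = t/\lambda$ produces the factor $\lambda^{-1}$ in front of $\rho(A,\varphi)$.

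The most substantial part is \eqref{enu:EigenschaftenAR_Dach_Coboundary}. Here I would use that the ergodic sums telescope:
\[
S_n\psi(x) - S_n\varphi(x) = \chi(\theta^n x) - \chi(x),
\]
so with $M \coloneqq \sup|\chi| < \infty$ one has $|S_n\psi(x) - S_n\varphi(x)| \leq 2M$ uniformly in $n$ and $x$. This uniform bound forces $N_t^\psi(x) \leq N_{t+2M}^\varphi(x)$ and, symmetrically, $N_t^\varphi(x) \leq N_{t+2M}^\psi(x)$. The resulting inclusion $\{N_A \geq N_{t+2M}^\varphi\} \subseteq \{N_A \geq N_t^\psi\}$ yields, via the monotonicity of $-\log$,
\[
-\frac{1}{t}\log\mu\bigl(\{N_A \geq N_t^\psi\}\bigr) \leq \frac{t+2M}{t}\cdot\left(-\frac{1}{t+2M}\log\mu\bigl(\{N_A \geq N_{t+2M}^\varphi\}\bigr)\right),
\]
and the right-hand side tends to $\rho(A,\varphi)$ as $t\to\infty$, giving $\rho(A,\psi) \leq \rho(A,\varphi)$. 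The reverse inequality follows by swapping the roles of $\varphi$ and $\psi$, so the two rates coincide.

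The main (mild) obstacle is to keep the direction of the inequalities and the monotonicity of $\log$ straight when translating between $N_t^\varphi$ and $N_t^\psi$ inside the measure $\mu(\{N_A \geq \cdot\})$; the bounded coboundary $\chi$ only shifts the time parameter by a uniform constant, which disappears in the $1/t$ scaling, so nothing subtler than Lemma~\ref{lem:AR_Nur-NA-Nt} combined with the telescoping identity is required.
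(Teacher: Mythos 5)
Your proposal is correct and follows essentially the same route as the paper: both reduce all three parts to the reformulation of $\rho(A,\cdot)$ in terms of $N_A$ and $N_t^{\varphi}$ via Lemma~\ref{lem:AR_Nur-NA-Nt}, use $N_t^{\psi}\leq N_t^{\varphi}$ for monotonicity, $N_t^{\lambda\varphi}=N_{t/\lambda}^{\varphi}$ for scaling, and the telescoping bound $|S_n\psi-S_n\varphi|\leq 2\sup|\chi|$ to shift the time parameter by a uniform constant for the coboundary invariance. All inequality directions in your argument are consistent with the definitions, so no changes are needed.
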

\begin{proof}
Using Lemma~\ref{lem:AR_Nur-NA-Nt}, property \eqref{enu:EigenschaftenAR_Dach_Monotonie} follows from the fact that $\varphi\leq\psi$ implies $N_{t}^{\psi}\leq N_{t}^{\varphi}$. Property  \eqref{enu:EigenschaftenAR_Dach_Skalar} is a consequence of  $N_{t}^{\lambda\varphi}=N_{{t}/{\lambda}}^{\varphi}$. For \eqref{enu:EigenschaftenAR_Dach_Coboundary} note that $S_{n}\psi=S_{n}\varphi+\chi\circ\theta^{n}-\chi$ and thus $\left|S_{n}\psi-S_{n}\varphi\right|\leq\left|\chi\circ\theta^{n}-\chi\right|\leq2\cdot\sup\left|\chi\right|\eqqcolon c<\infty$. This implies 
\[
N_{t}^{\varphi}=\min\left\{ n\in\mathbb{N}\middle|\; S_{n}\varphi>t\right\} \leq\min\left\{ n\in\mathbb{N}\middle|\; S_{n}\psi-c>t\right\} =N_{t+c}^{\psi},
\]
 from which one concludes that
\begin{align*}
\rho\left(\varphi\right) & =\lim_{t\rightarrow\infty}-\frac{1}{t}\log\left(\mu\left(\left\{ x\in X\middle|\; N_{A}\left(x\right)\geq N_{t}^{\varphi}\left(x\right)\right\} \right)\right)\\
 & \leq\lim_{t\rightarrow\infty}-\frac{1}{t}\log\left(\mu\left(\left\{ x\in X\middle|\; N_{A}\left(x\right)\geq N_{t+c}^{\psi}\left(x\right)\right\} \right)\right)\\
 & =\lim_{t\rightarrow\infty}-\frac{1}{t}\log\left(\mu\left(\left\{ x\in X\middle|\; N_{A}\left(x\right)\geq N_{t}^{\psi}\left(x\right)\right\} \right)\right)\\
 & =\rho\left(\psi\right).
\end{align*}
 Analogously one obtains $N_{t}^{\psi}\leq N_{t+c}^{\varphi}$ and thus $\rho\left(\varphi\right)\geq\rho\left(\psi\right)$, yielding the desired equality.
\end{proof}

Property \eqref{enu:EigenschaftenAR_Dach_Coboundary} means that the escape rate is invariant under changes of the ceiling function by addition of a \emph{coboundary} $\chi\circ\theta-\chi$. This is not surprising, considering that (invertible) special flows over the same base transformation are isomorphic if their ceiling functions differ only by a coboundary \cite[Theorem 1]{Gurevich:1965}.

\begin{prop}\label{prop:AR_glmKonvergenz}
	The escape rate with respect to a fixed hole is continuous when considered as a function on the space of ceiling functions equipped with the supremum norm.
\end{prop}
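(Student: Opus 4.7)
The plan is to combine the monotonicity and scaling properties from Proposition~\ref{prop:EigenschaftenAR_Dach} with a sandwich argument that exploits the fact that every ceiling function is bounded away from zero.

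First I fix a reference ceiling function $\varphi$ and set $m \coloneqq \inf \varphi > 0$. For any bounded ceiling function $\psi$ with $\varepsilon \coloneqq \sup_{x\in X}\left|\varphi(x) - \psi(x)\right| < m$, the estimate $\varphi \geq m$ converts the additive bound $\left|\psi - \varphi\right| \leq \varepsilon$ into the pointwise multiplicative sandwich
\[
\left(1 - \frac{\varepsilon}{m}\right)\varphi \;\leq\; \psi \;\leq\; \left(1 + \frac{\varepsilon}{m}\right)\varphi,
\]
and in particular $\psi$ is again a ceiling function. Applying the monotonicity property~\eqref{enu:EigenschaftenAR_Dach_Monotonie} of Proposition~\ref{prop:EigenschaftenAR_Dach} together with the scaling property~\eqref{enu:EigenschaftenAR_Dach_Skalar} to both sides of the sandwich yields
\[
\frac{\rho\left(A, \varphi\right)}{1 + \varepsilon/m} \;\leq\; \rho\left(A, \psi\right) \;\leq\; \frac{\rho\left(A, \varphi\right)}{1 - \varepsilon/m}.
\]
Since both outer bounds converge to $\rho(A, \varphi)$ as $\varepsilon \to 0$, continuity at $\varphi$ follows.

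The main subtlety, and the only real obstacle, is that the monotonicity and scaling statements in Proposition~\ref{prop:EigenschaftenAR_Dach} are phrased under the a priori hypothesis that all involved escape rates exist. To make the sandwich argument self-contained I would first observe that both identities continue to hold verbatim at the level of the upper and lower escape rates $\overline{\rho}$ and $\underline{\rho}$ taken separately. Indeed, their derivations via Lemma~\ref{lem:AR_Nur-NA-Nt} rest only on the monotone set inclusion induced by $\varphi \leq \psi \Rightarrow N_{t}^{\psi} \leq N_{t}^{\varphi}$ and on the rescaling identity $N_{t}^{\lambda\varphi} = N_{t/\lambda}^{\varphi}$, both of which survive when $\lim$ is replaced by $\limsup$ or $\liminf$. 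The sandwich then refines to
\[
\frac{\underline{\rho}\left(A, \varphi\right)}{1 + \varepsilon/m} \;\leq\; \underline{\rho}\left(A, \psi\right) \;\leq\; \overline{\rho}\left(A, \psi\right) \;\leq\; \frac{\overline{\rho}\left(A, \varphi\right)}{1 - \varepsilon/m},
\]
which simultaneously shows that $\overline{\rho}(A, \cdot)$ and $\underline{\rho}(A, \cdot)$ are sup-norm continuous and that, whenever $\rho(A, \varphi)$ exists, $\rho(A, \psi)$ exists for all $\psi$ sufficiently close to $\varphi$ and converges to $\rho(A, \varphi)$ as $\psi \to \varphi$ in the supremum norm.
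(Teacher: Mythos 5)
Your proof is correct and takes essentially the same route as the paper: the paper's entire argument consists of the observation that $\left\Vert\varphi-\psi\right\Vert_\infty<\inf\varphi$ yields the pointwise sandwich $\bigl(1-\left\Vert\varphi-\psi\right\Vert_\infty/\inf\varphi\bigr)\varphi\leq\psi\leq\bigl(1+\left\Vert\varphi-\psi\right\Vert_\infty/\inf\varphi\bigr)\varphi$, combined with the monotonicity and scaling properties of Proposition~\ref{prop:EigenschaftenAR_Dach}, and your passage to $\overline{\rho}$ and $\underline{\rho}$ is exactly the content of Remark~\ref{rem:EigenschaftenAR_Dach_Oben-Unten-Bemerkung}. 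One caveat on your final sentence: the assertion that existence of $\rho\left(A,\varphi\right)$ forces existence of $\rho\left(A,\psi\right)$ for all $\psi$ sufficiently close to $\varphi$ does not follow from your chain of inequalities, which only confines $\underline{\rho}\left(A,\psi\right)$ and $\overline{\rho}\left(A,\psi\right)$ to an interval of length of order $\varepsilon$ around $\rho\left(A,\varphi\right)$ rather than forcing them to coincide; indeed, the construction in Remark~\ref{rem:VerschiedeneARtrotzBeschraenktheit} produces, arbitrarily close to $\ind$ in supremum norm, ceiling functions for which upper and lower escape rates differ. This overreach does not affect the continuity statement itself, which your sandwich establishes.
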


\begin{proof}
Let $A \subset X$ be a hole and let $\varphi, \psi: X \rightarrow \mathbb{R}$ be ceiling functions. Note that this implies $\inf \varphi, \inf \psi > 0$. The result follows from the observation
\begin{equation}
	\left(1-\frac{\left\Vert\varphi-\psi\right\Vert_\infty}{\inf\varphi}\right)\cdot\rho\left(\varphi\right)
	\leq
	\rho\left(\psi\right)
	\leq
	\left(1+\frac{\left\Vert\varphi-\psi\right\Vert_\infty}{\inf\varphi}\right)\cdot\rho\left(\varphi\right)
	\label{eq:AR-Approximation-Abschaetzung}
\end{equation}
 and the scaling property of the escape rate (Proposition~\ref{prop:EigenschaftenAR_Dach} (\ref{enu:EigenschaftenAR_Dach_Skalar})).
\end{proof}

\begin{rem}
	\label{rem:EigenschaftenAR_Dach_Oben-Unten-Bemerkung}In Propositions \ref{prop:EigenschaftenAR_Mengen}, \ref{prop:EigenschaftenAR_Dach} and \ref{prop:AR_glmKonvergenz} the escape rate can be replaced by the upper or lower escape rate to deal with situations where the two do not coincide.
\end{rem}

\section{The local escape rate}\label{chap:LokaleAR}

In this section it shall be assumed that $\left(X,d\right)$ is a metric space with a finite Borel measure $\mu$. Let $\theta:\left(X,\mu\right)\rightarrow\left(X,\mu\right)$ be an endomorphism and let $\Phi$ denote the semiflow with base transformation $\theta$ under a bounded ceiling function $\varphi:X\rightarrow\mathbb{R}$. The invariant measure of the flow is denoted by $\bbar \mu$.

\begin{defn}
	\label{def:lokale-AR}Let $x\in X$ be a point such that every neighbourhood $B_{r}\left(x\right)\subset X$, $r\in\mathbb{R}_{>0}$ is a hole.  
   	If the limit 
	\[
	\rho\left(x,\varphi\right)\coloneqq\lim_{r\rightarrow0}\frac{\rho\left(B_{r}\left(x\right),\varphi\right)}{\mu\left(B_{r}\left(x\right)\right)}
	\] exists, it is called the \emph{local escape rate} in $x$ with respect to the ceiling function $\varphi$. If $\varphi=\ind$, then this is the same as the local escape rate in $x$ of the base transformation.
\end{defn}

\begin{rem}
	\label{rem:LokaleAR_Normierung}Using $\mu\left(B_{r}\left(x\right)\right)$ for normalisation is a sensible choice, because the escape rate only depends on the shadow  of the hole.
\end{rem}

Amongst others, the local escape rate for discrete time systems has been investigated in \cite{Keller:2009,Bunimovich:2011,Ferguson:2012,Cristadoro:2013}. Theorem~\ref{thm:LokaleAR_IntegralMalBasis} characterises how the local escape rate with respect to the semiflow is connected to that of the base transformation. The following heuristic motivation of this result provides a blueprint for the proof.

According to Lemma~\ref{lem:AR_Nur-NA-Nt} the escape rate for a hole $A\subset X$ can be written as 
\[
\rho\left(A,\varphi\right)=\lim_{t\rightarrow\infty}-\frac{1}{t}\log\left(\mu\left(\left\{ x\in X\middle|\; N_{A}\geq N_{t}\right\} \right)\right).
\]
 Assuming that $S_{n}\varphi/n\rightarrow\mu\left(\varphi\right)$ holds and that this convergence happens sufficiently fast, $N_{t}$ can be approximated by $S_{N_{t}}\varphi/\mu\left(\varphi\right)$. Since $\varphi$ is assumed to be bounded, $S_{N_{t}}\varphi$ provides an approximation of $t$. Together, one obtains the naive approximation 
\[
\lim_{t\rightarrow\infty}-\frac{1}{t}\log\bbar{\mu}\left(\left\{ x\in X\middle|\; N_{A}\left(x\right)\geq\frac{t}{\mu\left(\varphi\right)}\right\} \right)=\frac{1}{\mu\left(\varphi\right)}\rho\left(A,\ind\right)
\]
 for the escape rate. The escape rate is blind to the values of the ceiling function on $A$, but $\mu\left(\varphi\right)$ contains this information. Therefore the scaling with $1 / \mu\left(\varphi\right)$ can only hold in the limit for shrinking holes -- which is exactly the situation that the local escape rate describes.

The condition that $S_{n}\varphi/n\rightarrow\mu\left(\varphi\right)$ converges sufficiently fast can be made explicit by requiring an exponential decay of  
\begin{equation}
	\label{eq:P_epsilon_k}
	P_{k}^{\varepsilon}\coloneqq\mu\left(\left\{ x\in X\middle|\; \sup_{l\geq k}\left|\frac{S_{l}\varphi\left(x\right)}{l}-\mu\left(\varphi\right)\right|\geq\varepsilon\right\} \right).
\end{equation}

\begin{thm}\label{thm:LokaleAR_IntegralMalBasis}
	Let $x_{0}\in X$ be a point for which the local escape rate with respect to the base exists and where $\mu$ does not have an atom. Let $P_{k}^{\varepsilon}$ be as in equation~\eqref{eq:P_epsilon_k} and assume that 
	\[
	\forall\varepsilon>0\exists C>0,\zeta<1:\forall k\in\mathbb{N}:\quad P_{k}^{\varepsilon}\leq C\zeta^{k}.
	\]
	 Then 
	\[
	\rho\left(x_{0},\varphi\right)=\frac{\rho\left(x_{0},\ind\right)}{\mu\left(\varphi\right)}.
	\]
\end{thm}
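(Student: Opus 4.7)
The plan is to formalise the heuristic preceding the theorem. By Lemma~\ref{lem:AR_Nur-NA-Nt} and Remark~\ref{rem:AR_zu_DachFunktion1_gleich_Basis}, the escape rates $\rho(A,\varphi)$ and $\rho(A,\1)$ are the exponential decay rates of $\mu(\{N_A \geq N_t^\varphi\})$ and $\mu(\{N_A \geq n\})$, respectively. The strategy is to transfer between the two by pointwise approximating $N_t^\varphi$ with $t/\mu(\varphi)$, controlling the exceptional set through the hypothesis on $P_k^\varepsilon$.

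Fix $\varepsilon>0$ and set $k(t) := \lfloor t/\sup\varphi\rfloor$, which by boundedness of $\varphi$ forces $N_t^\varphi \geq k(t)$ everywhere. On the good set $G_t := \bigl\{x : \sup_{l\geq k(t)} \lvert S_l\varphi(x)/l - \mu(\varphi)\rvert < \varepsilon\bigr\}$, whose complement has measure $P_{k(t)}^\varepsilon$, applying the bounds $l(\mu(\varphi)-\varepsilon) < S_l\varphi(x) < l(\mu(\varphi)+\varepsilon)$ to $l = N_t^\varphi$ and $l = N_t^\varphi - 1$ yields the pointwise sandwich $t/(\mu(\varphi)+\varepsilon) < N_t^\varphi(x) \leq t/(\mu(\varphi)-\varepsilon) + 1$. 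This translates into
\[
\mu\bigl(\bigl\{N_A \geq t/(\mu(\varphi)-\varepsilon) + 1\bigr\}\bigr) - P_{k(t)}^\varepsilon \leq \mu\bigl(\bigl\{N_A \geq N_t^\varphi\bigr\}\bigr) \leq \mu\bigl(\bigl\{N_A > t/(\mu(\varphi)+\varepsilon)\bigr\}\bigr) + P_{k(t)}^\varepsilon.
\]
Applying $-\tfrac{1}{t}\log(\cdot)$, using $-\log(a+b) \geq \min(-\log(2a),-\log(2b))$ and $-\log(a-b) \leq -\log(a/2)$ (when $b\leq a/2$), substituting $s = t/(\mu(\varphi)\pm\varepsilon)$, invoking Remark~\ref{rem:AR-Alternative-Mengenfolge}, and noting that $P_k^\varepsilon \leq C\zeta^k$ gives $-\tfrac{1}{t}\log P_{k(t)}^\varepsilon \to -\log\zeta/\sup\varphi$, one obtains
\[
\underline{\rho}(A,\varphi) \geq \min\!\left(\tfrac{\rho(A,\1)}{\mu(\varphi)+\varepsilon},\tfrac{-\log\zeta}{\sup\varphi}\right), \qquad \overline{\rho}(A,\varphi) \leq \tfrac{\rho(A,\1)}{\mu(\varphi)-\varepsilon},
\]
the upper bound on $\overline\rho$ being valid as soon as $\rho(A,\1) < (\mu(\varphi)-\varepsilon)\cdot(-\log\zeta/\sup\varphi)$, ensuring the $P_{k(t)}^\varepsilon$ term does not dominate the leading measure term.

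Finally, specialise to $A = B_r(x_0)$. The existence of $\rho(x_0,\1)$ together with $\mu(\{x_0\}) = 0$ gives $\rho(B_r(x_0),\1) \sim \rho(x_0,\1)\cdot\mu(B_r(x_0)) \to 0$ as $r\to 0$, so for each fixed $\varepsilon$ the smallness condition holds once $r$ is small enough, and the $\min$ then reduces to its first argument. Dividing the preceding inequalities by $\mu(B_r(x_0))$, sending $r\to 0$ and then $\varepsilon\to 0$ pinches both the liminf and limsup of $\rho(B_r(x_0),\varphi)/\mu(B_r(x_0))$ between $\rho(x_0,\1)/\mu(\varphi)$ and itself, giving the claim. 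The main obstacle is precisely balancing the garbage $P_{k(t)}^\varepsilon$ against the leading term in the lower sandwich: since $\zeta = \zeta(\varepsilon)$ may approach $1$ as $\varepsilon\to 0$, the required smallness condition on $\rho(A,\1)$ is nontrivial, and the shrinking hole $A = B_r(x_0)$ is exactly what supplies it.
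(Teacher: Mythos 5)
Your proof is correct and follows essentially the same route as the paper's: split off the bad set of measure $P^{\varepsilon}_{k(t)}$, sandwich $N_t^{\varphi}$ between $t/(\mu(\varphi)\pm\varepsilon)$ on the good set, and exploit the fact that $\rho(B_r(x_0),\1)\to 0$ as $r\to 0$ so that the exponentially decaying error term never dominates the leading term. The only omission is the degenerate case $\rho(x_0,\1)=\infty$, where $\rho(B_r(x_0),\1)$ need not tend to $0$; the paper dispatches this (together with the case $\rho(x_0,\1)=0$) at the outset via the elementary bounds $\rho(x_0,\1)/\sup\varphi\leq\rho(x_0,\varphi)\leq\rho(x_0,\1)/\inf\varphi$.
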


\begin{proof}
If $\rho\left(x_{0},\ind\right)$ is zero or infinity, then the theorem holds, because 
\begin{align*}
\rho\left(x_{0},\ind\right)\cdot\frac{1}{\sup\varphi}  =\rho\left(x_{0},\sup\varphi\right)
  & \leq\rho\left(x_{0},\varphi\right) \\
 & \leq\rho\left(x_{0},\inf\varphi\right)=\rho\left(x_{0},\ind\right)\cdot\frac{1}{\inf\varphi}.
\end{align*}

Now consider the case $0 < \rho\left(x_{0},\ind\right) < \infty$. The general strategy of this proof is to obtain upper and lower estimates for  
\[
\mu\left(\left\{ x\in X\middle|\; N_{B_{r}}\geq N_{t}\right\} \right)
\]
which provide lower and upper estimates for the local escape rate. In order to keep the notation as compact as possible, $B_{r}\left(x_{0}\right)$ is shortened to $B_{r}$.

Rewrite $N_{t}$ as  
\[
N_{t}=\frac{S_{N_{t}}\varphi}{\mu\left(\varphi\right)+R_{N_{t}}}
\]
with 
\[
R_{N_{t}}\coloneqq\frac{S_{N_{t}}\varphi}{N_{t}}-\mu\left(\varphi\right)
\]
 and note that $\sup\varphi<\infty$ implies $N_{t}\left(x\right) \rightarrow \infty$ uniformly as $t\rightarrow\infty$.

The objective is now to calculate an upper bound for the local escape rate.

Fix an $\varepsilon>0$ with $\varepsilon<\mu\left(\varphi\right)$.
\begin{align}
	 & \mu\left(\left\{ x\in X\middle|\; N_{B_{r}}\geq N_{t}\right\} \right)\label{eq:AbschaetzungNachUnten_Stern1_P}\\
	=\, & \mu\left(\left\{ x\in X\middle|\; N_{B_{r}}\geq\frac{S_{N_{t}}\varphi}{\mu\left(\varphi\right)+R_{N_{t}}}\right\} \right)\nonumber \\
	\geq\, & \mu\left(\left\{ x\in X\middle|\; N_{B_{r}}\geq\frac{S_{N_{t}}\varphi}{\mu\left(\varphi\right)-\varepsilon}\right\} \setminus\left\{ x\in X\middle|\; R_{N_{t}}<-\varepsilon\right\} \right)\nonumber \\
	\geq\, & \mu\left(\left\{ x\in X\middle|\; N_{B_{r}}\geq\frac{S_{N_{t}}\varphi}{\mu\left(\varphi\right)-\varepsilon}\right\} \right)-\underbrace{\mu\left(\left\{ x\in X\middle|\; R_{N_{t}}<-\varepsilon\right\} \right)}\nonumber \\
	\geq\, & \underset{\eqqcolon\left(\star\right)}{\underbrace{\mu\left(\left\{ x\in X\middle|\; N_{B_{r}}\geq\frac{t+\sup\varphi}{\mu\left(\varphi\right)-\varepsilon}\right\} \right)}}-P_{N_{t}}^{\varepsilon}\nonumber 
\end{align}
 $P_{N_{t}}^{\varepsilon}$ can be bounded from above via
\begin{equation}
P_{N_{t}}^{\varepsilon}\leq C\zeta^{\left(\frac{t}{\sup\varphi}\right)}=C\zeta_{1}^{t}\label{eq:AbschaetzungPnachOben}
\end{equation}
 using $\zeta_{1}\coloneqq\zeta^{1/\sup\varphi}<1$, because $N_{t}\left(x\right)\geq t/\sup\varphi$. Since the local escape rate with respect to the base transformation is assumed to be finite and positive, one has
\[
\forall\varepsilon'>0\exists r_{0}\left(\varepsilon'\right):\forall r\in\left(0,r_{0}\right):\quad\frac{\rho\left(B_{r},\ind\right)}{\mu\left(B_{r}\right)}\leq\left(1+\varepsilon'\right)\rho\left(x_{0},\ind\right).
\]
 This implies 
\begin{equation}
	\rho\left(B_{r},\ind\right)\leq\left(1+\varepsilon'\right)\rho\left(x_{0},\ind\right)\mu\left(B_{r}\right)\label{eq:Abschaetzung_ObereAR-kleinergleich}
\end{equation}
for sufficiently small $r$. Since the escape rate with respect to the ceiling function $\ind$ is equal to the escape rate with respect to the base transformation, Remark~\ref{rem:ARdiskret_reellerParameter} allows one to conclude that
\begin{equation*}
	\forall\varepsilon''>0\exists t_{0}\left(\varepsilon'',r\right):\forall t\geq t_{0}:
	-\frac{1}{t}\log\mu\left(\left\{ x\in X\middle|\; N_{B_{r}}\geq t\right\} \right)\leq\left(1+\varepsilon''\right)\rho\left(B_{r},\ind\right).
\end{equation*}
This implies the estimate  
\begin{equation}
	\mu\left(\left\{ x\in X\middle|\; N_{B_{r}}\geq t\right\} \right)\geq\left(e^{-\left(1+\varepsilon''\right)\rho\left(B_{r},\ind\right)}\right)^{t}\label{eq:Abschaetzung_ARmu-groessergleich}
\end{equation}
 for sufficiently large $t$. Inequalities \eqref{eq:Abschaetzung_ObereAR-kleinergleich} and \eqref{eq:Abschaetzung_ARmu-groessergleich} imply 
\[
\mu\left(\left\{ x\in X\middle|\; N_{B_{r}}\geq t\right\} \right)\geq\left(e^{-\left(1+\varepsilon''\right)\left(1+\varepsilon'\right)\rho\left(x_{0},\ind\right)\mu\left(B_{r}\right)}\right)^{t}\eqqcolon\zeta_{2}^{t}
\]
 with $\zeta_{2}\left(r,\varepsilon',\varepsilon''\right)<1$ for sufficiently small $r$ and sufficiently large $t\left(r\right)$. Note that $\zeta_{2}$ tends to $1$ for $r\rightarrow0$ because of  $\lim_{r\rightarrow0}\mu\left(B_{r}\left(x_{0}\right)\right)=0$ which in turn follows from $\mu$ being finite and without atom in $x_{0}$. This new inequality can be used in conjunction with \eqref{eq:AbschaetzungPnachOben} to continue the chain of inequalities \eqref{eq:AbschaetzungNachUnten_Stern1_P}:
\begin{align*}
	\mu\left(\left\{ x\in X\middle|\; N_{B_{r}}\geq N_{t}\right\} \right) & \geq\left(\star\right)-P_{N_{t}}^{\varepsilon}\\
	 & \geq\zeta_{2}^{\frac{t+\sup\varphi}{\mu\left(\varphi\right)-\varepsilon}}-C\zeta_{1}^{t}\\
	 & \geq\zeta_{2}^{\frac{t+\sup\varphi}{\mu\left(\varphi\right)-\varepsilon}}\cdot\left(1-C\zeta_{2}^{-\left(\frac{1+\sup\varphi\cdot t^{-1}}{\mu\left(\varphi\right)-\varepsilon}\right)\cdot t}\zeta_{1}^{t}\right)\\
	 & =\zeta_{2}^{\frac{t+\sup\varphi}{\mu\left(\varphi\right)-\varepsilon}}\cdot\left(1-C\zeta_{3}^{t}\right)
\end{align*}
with  
\[
\zeta_{3}\coloneqq\zeta_{2}^{-\frac{1+\sup\varphi\cdot t^{-1}}{\mu\left(\varphi\right)-\varepsilon}}\zeta_{1}.
\]
This constitutes the necessary lower estimate for $\mu\left(\left\{ x\in X\middle|\; N_{B_{r}}\geq N_{t}\right\} \right)$ which implies
\begin{equation*}
-\frac{1}{t}\log\mu\left(\left\{ x\in X\middle|\; N_{B_{r}}\geq N_{t}\right\} \right) 
\leq-\frac{1}{t}\cdot\frac{t+\sup\varphi}{\mu\left(\varphi\right)-\varepsilon}\log\zeta_{2}-\frac{1}{t}\log\left(1-C\zeta_{3}^{t}\right).
\end{equation*}
 From $\zeta_{1} < 1$ and the previously remarked property of $\zeta_{2}$ one concludes that $\zeta_{3}<1$ for sufficiently small $r$ and sufficiently large $t\left(r\right)$. This implies that $1-C\zeta_{3}^{t}$ is positive, bounded and bounded away from zero for $t\rightarrow\infty$ and sufficiently small $r$. Consequently, 
\[
\lim_{t\rightarrow\infty}\frac{1}{t}\log\left(1-C\zeta_{3}^{t}\right)=0.
\]
 Utilising this property and plugging in the definition of $\zeta_{2}$, one obtains 
\begin{align*}
	\rho\left(B_{r},\varphi\right) & =\lim_{t\rightarrow\infty}-\frac{1}{t}\log\mu\left(\left\{ x\in X\middle|\; N_{B_{r}}\geq N_{t}\right\} \right)\\
	 & \leq\frac{1}{\mu\left(\varphi\right)-\varepsilon}\cdot\left(1+\varepsilon''\right)\left(1+\varepsilon'\right)\rho\left(x_{0},\ind\right)\mu\left(B_{r}\right)+0
\end{align*}
 for sufficiently small $r$. This inequality holds for all $\varepsilon''>0$ and thus one concludes  
\[
\frac{\rho\left(B_{r},\varphi\right)}{\mu\left(B_{r}\right)}\leq\frac{1}{\mu\left(\varphi\right)-\varepsilon}\cdot\left(1+\varepsilon'\right)\rho\left(x_{0},\ind\right).
\]
 Letting $r$ tend to $0$, $\varepsilon'$ is allowed to become arbitrarily small, therefore  
\[
\limsup_{r\rightarrow\infty}\frac{\rho\left(B_{r},\varphi\right)}{\mu\left(B_{r}\right)}\leq\frac{1}{\mu\left(\varphi\right)-\varepsilon}\cdot\rho\left(x_{0},\ind\right).
\]
 Finally, $\varepsilon>0$ may be chosen arbitrarily small and thus one obtains the desired upper estimate for the local escape rate:
\[
\limsup_{r\rightarrow\infty}\frac{\rho\left(B_{r},\varphi\right)}{\mu\left(B_{r}\right)}\leq\frac{\rho\left(x_{0},\ind\right)}{\mu\left(\varphi\right)}.
\]

The lower estimate 
\[
\liminf_{r\rightarrow\infty}\frac{\rho\left(B_{r},\varphi\right)}{\mu\left(B_{r}\right)}\geq\frac{\rho\left(x_{0},\ind\right)}{\mu\left(\varphi\right)}
\]
can be obtained by an analogous calculation. Both estimates together imply the theorem.
\end{proof}

In \cite{Dreher:2015}, it was remarked that Theorem~\ref{thm:LokaleAR_IntegralMalBasis} holds for Markov shifts on a finite alphabet and continuous ceiling functions. This was achieved by using a result from \cite{Katz:1961} to obtain the result for cylinder functions and then applying the continuity of the escape rate (Proposition~\ref{prop:AR_glmKonvergenz}).

The following section improves on that result by showing that the condition on $P_{k}^{\varepsilon}$ that is required by Theorem~\ref{thm:LokaleAR_IntegralMalBasis} holds for $g$-measures with a differentiability property of the associated pressure function (Proposition~\ref{prop:deviationProperty}).
 
\section{Exponential decay for large deviation bounds for $g$-measures }\label{sec:weakThermo}

Let $\left(X,\theta\right)$ be a (onesided) subshift of finite type
over a finite alphabet, denote its Borel $\sigma$-algebra by $\mathcal{B}$
and let $\mathcal{C}(X)$ be the set of real-valued, continuous functions
on $X$. The set of all probability measures on $X$ is denoted by
$\mathcal{M}(X)$.  The \emph{pressure} \emph{function} $P:\mathcal{C}(X)\to\mathbb{R}$
is defined as 
\begin{equation}
P(f):=\lim_{n\to\infty}\frac{1}{n}\log\sum_{C\in C_{n}}\exp\left(\sup_{x\in C}S_{n}f(x)\right).\label{Def:pressure}
\end{equation}
 
\begin{lem}
Let $\psi,\varphi\in\mathcal{C}(X)$ with $\psi<0$. Then the map
$p:\mathbb{R}\to\mathbb{R}$, $s\mapsto P(\varphi+s\psi)$, is a convex
function, which is decreasing from $\infty$ to $-\infty$.\end{lem}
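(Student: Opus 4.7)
The plan is to prove the three claims --- convexity, strict monotonicity, and the limits at $\pm\infty$ --- directly from the definition \eqref{Def:pressure} of the pressure, exploiting linearity of $s\mapsto S_n(\varphi+s\psi)$ and the compactness of $X$. For brevity write $Z_n(s):=\sum_{C\in C_n}\exp\bigl(\sup_{x\in C}S_n(\varphi+s\psi)(x)\bigr)$, so that $p(s)=\lim_{n\to\infty}\tfrac1n\log Z_n(s)$.

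Convexity will be obtained by Hölder's inequality at the level of the partition functions $Z_n$. For $\lambda\in(0,1)$ and $s=\lambda s_1+(1-\lambda)s_2$, linearity of $S_n$ in the potential and subadditivity of $\sup$ give, for every cylinder $C\in C_n$,
\[
\sup_{x\in C}S_n(\varphi+s\psi)(x)\;\leq\;\lambda\sup_{x\in C}S_n(\varphi+s_1\psi)(x)+(1-\lambda)\sup_{x\in C}S_n(\varphi+s_2\psi)(x).
\]
Exponentiating, summing over $C\in C_n$ and applying Hölder with conjugate exponents $1/\lambda$ and $1/(1-\lambda)$ yields $Z_n(s)\leq Z_n(s_1)^{\lambda}Z_n(s_2)^{1-\lambda}$; taking $\tfrac1n\log$ and passing to the limit gives $p(s)\leq\lambda p(s_1)+(1-\lambda)p(s_2)$.

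For monotonicity, I will use that $X$ is compact and $\psi$ continuous and strictly negative, so $c:=-\max_X\psi>0$. For $s_1<s_2$ one has the pointwise bound $\varphi+s_1\psi\geq\varphi+s_2\psi+(s_2-s_1)c$, which after applying $S_n$ and $\sup_{x\in C}$ turns into $\sup_{x\in C}S_n(\varphi+s_1\psi)\geq\sup_{x\in C}S_n(\varphi+s_2\psi)+n(s_2-s_1)c$. Hence $Z_n(s_1)\geq \mathrm{e}^{n(s_2-s_1)c}Z_n(s_2)$ and, in the limit,
\[
p(s_1)\;\geq\;p(s_2)+(s_2-s_1)c,
\]
so that $p$ is not merely decreasing but descends at least with slope $-c$.

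The limits are immediate consequences of this linear bound: taking $s_2=0$ and $s_1=s\to-\infty$ gives $p(s)\geq p(0)-sc\to+\infty$, while taking $s_1=0$ and $s_2=s\to+\infty$ gives $p(s)\leq p(0)-sc\to-\infty$. Continuity, needed to say $p$ traverses all of $\mathbb{R}$, is automatic from convexity on an open interval. I expect the main technical care to be in the convexity step, specifically in first bounding the supremum of a convex combination by the convex combination of suprema \emph{before} exponentiating, so that Hölder can be applied cleanly to the resulting sum; the rest of the argument is routine once $\max\psi<0$ has been extracted from compactness of $X$.
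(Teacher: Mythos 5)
Your proof is correct, and it takes a more self-contained route than the paper, which simply cites Walters for the convexity and monotonicity of the pressure as a functional on $\mathcal{C}(X)$ and then invokes the general inequality $P(0)+\inf f\leq P(f)\leq P(0)+\sup f$ to get the divergence. You instead work directly from the definition \eqref{Def:pressure}: your H\"older argument at the level of the partition functions $Z_n$ is precisely the standard proof of convexity of the pressure, here unpacked rather than quoted, and your quantitative bound $p(s_1)\geq p(s_2)+(s_2-s_1)c$ with $c=-\max_X\psi>0$ (legitimately extracted from compactness of the subshift and continuity of $\psi$) delivers strict monotonicity and both limits in one stroke; the paper's divergence argument is the same estimate in disguise, obtained by applying its cited inequality to $f=\varphi+s\psi$ and using $\sup\psi<0$. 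What your version buys is independence from external references and a slightly stronger conclusion (descent at rate at least $c$, hence strict decrease); what it costs is that you implicitly rely on the existence and finiteness of the limit defining $P$, which the paper's definition also takes for granted, so nothing is lost. Your closing remark that continuity is automatic for a finite convex function on $\mathbb{R}$ correctly supplies the surjectivity onto $(-\infty,\infty)$. Incidentally, your argument establishes that $p$ is \emph{decreasing}, consistent with the lemma's statement; the paper's proof text says ``increasing,'' which is evidently a slip.
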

\begin{proof}
Since the pressure is convex and increasing as a function of $\mathcal{C}(X)$,
the function $p$ is convex, and increasing \cite{Walters:2000}.
The divergence property follows from the inequality $P(0)+\inf f\leq P(f)\leq P(0)+\sup f$,
which is valid for all $f\in\mathcal{C}(X)$ (cf. \cite[Theorem 9.7.(ii)+(v)]{Walters:2000}). 
\end{proof}
In this setting, the Perron-Frobenius operator $\mathcal{L}_{\psi}:\mathcal{C}(X)\to\mathcal{C}(X)$
can be defined for $\psi\in\mathcal{C}(X)$ by
\[
\mathcal{L}_{\psi}g(x):=\sum_{y\in\theta^{-1}\left\{ x\right\} }\exp\left(\psi(y)\right)\,g(y).
\]
\begin{fact}
\label{Fact:Eigen}By the Schauder-Tychonov fixed point theorem, for
any $\psi\in\mathcal{C}\left(X\right)$ there exists $\mu\in\mathcal{M}(X)$
and $\lambda>0$ such that 
\begin{equation}
\mathcal{L}_{\psi}^{*}\mu=\lambda\mu,\label{Formel:eigenmeasure}
\end{equation}
where $\mathcal{L}_{\psi}^{*}$ denotes the adjoint operator of $\mathcal{L}_{\psi}$.
Moreover, the eigenvalue $\lambda$ is uniquely determined by $\psi$
and is related to the pressure by $\lambda=\exp P\left(\psi\right)$.
\end{fact}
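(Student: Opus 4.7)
The plan is to apply the Schauder-Tychonov fixed point theorem to the normalised dual action of $\mathcal{L}_{\psi}$ on $\mathcal{M}(X)$, and then identify the resulting eigenvalue with $e^{P(\psi)}$ by iterating the eigenrelation and comparing with the defining sum of the pressure.

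For the existence part, note first that $\mathcal{L}_{\psi}$ preserves $\mathcal{C}(X)$: the preimages $\theta^{-1}\{x\}$ have uniformly bounded cardinality (the alphabet is finite) and depend locally continuously on $x$, so $\mathcal{L}_{\psi}f$ is continuous whenever $f$ is; in particular $\mathcal{L}_{\psi}\mathbf{1}$ is strictly positive and bounded below on the compact space $X$. Define
$$\Lambda:\mathcal{M}(X)\to\mathcal{M}(X),\qquad \nu\mapsto \frac{\mathcal{L}_{\psi}^{*}\nu}{\nu\bigl(\mathcal{L}_{\psi}\mathbf{1}\bigr)}.$$
This is well-defined since $\nu(\mathcal{L}_{\psi}\mathbf{1})\geq \inf \mathcal{L}_{\psi}\mathbf{1}>0$ for every $\nu\in\mathcal{M}(X)$, and, equipping $\mathcal{M}(X)$ with the weak-$*$ topology, it is continuous because $\nu\mapsto\nu(\mathcal{L}_{\psi}f)$ is weak-$*$ continuous for every $f\in\mathcal{C}(X)$. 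The set $\mathcal{M}(X)$ is convex and weak-$*$ compact (Banach-Alaoglu together with the compactness of $X$), so Schauder-Tychonov furnishes a fixed point $\mu$. Setting $\lambda:=\mu(\mathcal{L}_{\psi}\mathbf{1})>0$ yields $\mathcal{L}_{\psi}^{*}\mu=\lambda\mu$.

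To pin down $\lambda$, I iterate the eigenrelation, obtaining $(\mathcal{L}_{\psi}^{*})^{n}\mu=\lambda^{n}\mu$, and test against the constant function $\mathbf{1}$:
$$\lambda^{n}=\int \mathcal{L}_{\psi}^{n}\mathbf{1}(x)\,d\mu(x)=\int \sum_{y\in\theta^{-n}\{x\}} e^{S_{n}\psi(y)}\,d\mu(x).$$
For each $x$, the sum on the right ranges over exactly one representative in every admissible $n$-cylinder whose last symbol can precede $x_{1}$. By the uniform continuity of $\psi$ on the compact space $X$, a bounded-distortion estimate gives, for every $\varepsilon>0$ and all sufficiently large $n$,
$$e^{-n\varepsilon}\sum_{C\in C_{n}} e^{\sup_{x\in C} S_{n}\psi(x)}\;\leq\;\mathcal{L}_{\psi}^{n}\mathbf{1}(x)\;\leq\;\sum_{C\in C_{n}} e^{\sup_{x\in C} S_{n}\psi(x)},$$
uniformly in $x\in X$. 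Integrating against $\mu$, taking $\frac{1}{n}\log$ and letting $n\to\infty$ delivers $\log\lambda=P(\psi)$ via the definition \eqref{Def:pressure}, which simultaneously shows that the eigenvalue is uniquely determined by $\psi$.

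The main obstacle is the comparison step: the cylinder sum defining $P(\psi)$ and the sum over $\theta^{-n}\{x\}$ are not literally identical when $\theta$ is a proper subshift of finite type rather than the full shift, because admissibility constraints remove some cylinders from the preimage of a given $x$. One has to argue, using irreducibility of the transition matrix, that this combinatorial restriction contributes only a subexponential prefactor and therefore does not affect the logarithmic growth rate; once this is absorbed into the bounded-distortion inequality above, the remaining steps are entirely standard.
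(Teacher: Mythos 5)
Your construction is exactly the one the paper gestures at: the statement is given as a Fact with no argument beyond naming the Schauder--Tychonov theorem, and your normalised map $\nu\mapsto\mathcal{L}_{\psi}^{*}\nu/\nu\bigl(\mathcal{L}_{\psi}\1\bigr)$ on the weak-$*$ compact convex set $\mathcal{M}(X)$ is the standard way to apply it; the existence part of your argument is complete and correct, granting the (implicit, standard) assumption that every point of the subshift has at least one preimage so that $\mathcal{L}_{\psi}\1$ is bounded below. The identification $\lambda=\exp P(\psi)$ follows the right strategy, but the displayed lower bound $e^{-n\varepsilon}\sum_{C\in C_{n}}e^{\sup_{x\in C}S_{n}\psi(x)}\leq\mathcal{L}_{\psi}^{n}\1(x)$ is false as written for a proper subshift of finite type: a cylinder $[a_{1},\dots,a_{n}]$ with $p_{a_{n},x_{1}}=0$ contributes to the left-hand sum but contains no point of $\theta^{-n}\{x\}$. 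You correctly diagnose this in your closing paragraph but leave the repair as a sketch, and it does need to be carried out since the uncorrected inequality is the crux of the lower bound. The fix is short: by irreducibility there is an $N$ such that every symbol $a$ connects to $x_{1}$ by an admissible word of length $j(a)\leq N$; extending each cylinder by such a word produces, for each $C\in C_{n}$, a point of $\theta^{-(n+j)}\{x\}\cap C$ for some $j\leq N$, whence $\sum_{j=0}^{N}\mathcal{L}_{\psi}^{n+j}\1(x)\geq e^{-N\left\Vert \psi\right\Vert _{\infty}-\eta_{\psi}(n)}\sum_{C\in C_{n}}e^{\sup_{x\in C}S_{n}\psi(x)}$ with $\eta_{\psi}(n)=\oo{n}$, and the finite sum on the left contributes nothing to the exponential growth rate, so $\log\lambda\geq P(\psi)$ follows. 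Note finally that Section~\ref{sec:weakThermo} of the paper does not explicitly assume irreducibility of the subshift, so this hypothesis should be recorded where you use it; the upper bound, and hence the uniqueness of $\lambda$ once both bounds are in place, needs no such assumption.
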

Following \cite{Yuri:1998} and \cite{MR1819804}, any $\mu\in\mathcal{M}(X)$
fulfilling (\ref{Formel:eigenmeasure}) for $\psi\in\mathcal{C}\left(X\right)$
and $\lambda=\exp P(\psi)$ is called a \emph{weak Gibbs measure}
for $\psi$. 
\begin{fact}
[\cite{MR1819804}]\label{lemma:weakGibbsProperty}Let $\mu\in\mathcal{M}(X)$
be a weak Gibbs measure for $\psi\in\mathcal{C}\left(X\right)$. Then
for all $n\in\mathbb{N}$ and $x\in\left[x_{1},\ldots,x_{n}\right]$ one has that
\[
\exp(-\eta_{\psi}(n))\leq\frac{\mu\left(\left[x_{1},\ldots,x_{n}\right]\right)}{\exp\left(S_{n}\psi(x)-nP(\psi)\right)}\leq\exp(\eta_{\psi}(n)).
\]
In here, $\eta_{\psi}(n):=\var_{n}\left(S_{n}\psi\right)$ with $\var_{n}(\psi):=\sup_{C\in C_{n}}\sup_{x,y\in C}\left\{ \left|\psi(x)-\psi(y)\right|\right\} $,
$n\in\mathbb{N}$, and it holds that $\eta_{\psi}(n)/n\to0$ as $n\to\infty$.
From these inequalities one deduces immediately that $\sup\psi<P(\psi)$
is sufficient for $\mu$ being atom-free and positive on all non-empty
open sets.
\end{fact}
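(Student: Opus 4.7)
The statement packages three assertions, the first two of which are quoted from \cite{MR1819804}; my plan is to sketch why each follows from what is already set up, and then treat the concluding remark about $\sup\psi<P(\psi)$ in detail.

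For the two-sided inequality the natural starting point is the eigenmeasure identity $\mathcal{L}_{\psi}^{*}\mu=e^{P(\psi)}\mu$ from Fact~\ref{Fact:Eigen}. Iterating and using duality gives, for any cylinder $C_{n}=[x_{1},\dots,x_{n}]$,
\[
\mu(C_{n})=e^{-nP(\psi)}\int_{X}\mathcal{L}_{\psi}^{n}\ind_{C_{n}}(z)\,\mathrm{d}\mu(z).
\]
Expanding $\mathcal{L}_{\psi}^{n}\ind_{C_{n}}(z)$ as a sum over $y\in\theta^{-n}\{z\}\cap C_{n}$, each surviving $y$ begins with the prefix $x_{1}\dots x_{n}$, and every such $y$ satisfies $\left|S_{n}\psi(y)-S_{n}\psi(x)\right|\leq\var_{n}(S_{n}\psi)=\eta_{\psi}(n)$ for any reference $x\in C_{n}$. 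Pulling $e^{S_{n}\psi(x)}$ out of the integrand produces the two-sided bound, the remaining sum over admissible continuations being kept between positive constants by irreducibility of the subshift.

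The assertion $\eta_{\psi}(n)/n\to0$ is the standard Ces\`aro argument. By continuity of $\psi$ on the compact space $X$ the variations $\var_{j}(\psi)$ tend to $0$, and two points in an $n$-cylinder land in a common $(n-k)$-cylinder after $k$ iterations of $\theta$, hence
\[
\eta_{\psi}(n)=\var_{n}(S_{n}\psi)\leq\sum_{k=0}^{n-1}\var_{n-k}(\psi)=\sum_{j=1}^{n}\var_{j}(\psi)=\oo{n}.
\]

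The only genuinely new content is the consequence under $\sup\psi<P(\psi)$. Positivity on non-empty open sets is immediate: every such set contains a non-empty cylinder $C_{n}$, and the lower bound forces $\mu(C_{n})>0$. For atom-freeness, fix $x\in X$ and consider the nested cylinders $C_{n}\ni x$. Combining the upper bound with $S_{n}\psi(x)\leq n\sup\psi$ yields
\[
\mu(C_{n})\leq\exp\bigl(\eta_{\psi}(n)+n(\sup\psi-P(\psi))\bigr),
\]
whose exponent diverges to $-\infty$ by hypothesis together with $\eta_{\psi}(n)=\oo{n}$, so $\mu(\{x\})=\lim_{n}\mu(C_{n})=0$. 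The main technical work is really hidden inside \cite{MR1819804}: carefully tracking which admissible extensions appear when expanding $\mathcal{L}_{\psi}^{n}$ in the SFT setting is delicate, which is precisely why the authors import the Gibbs inequality as a cited fact rather than reproving it.
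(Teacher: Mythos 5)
The paper offers no proof of this statement at all: the two-sided inequality and the asymptotic $\eta_{\psi}(n)/n\to0$ are imported verbatim from \cite{MR1819804}, and even the closing deduction is left as ``immediate.'' Your proposal therefore does strictly more than the paper. The part you treat in detail --- that $\sup\psi<P(\psi)$ forces $\mu$ to be atom-free and positive on non-empty open sets --- is correct and complete: positivity follows from the lower Gibbs bound applied to a non-empty cylinder contained in the open set, and atom-freeness from $\mu(\{x\})=\lim_{n}\mu(C_{n})$ together with the upper bound and $S_{n}\psi(x)\leq n\sup\psi$, the exponent $n(\sup\psi-P(\psi))+\eta_{\psi}(n)$ diverging to $-\infty$ since $\eta_{\psi}(n)=\oo{n}$. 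Your Ces\`aro argument for $\eta_{\psi}(n)/n\to0$ is also the standard one and is correct, resting on uniform continuity of $\psi$ on the compact space $X$. One caveat on your sketch of the Gibbs inequality itself: expanding $\mathcal{L}_{\psi}^{n}\ind_{C_{n}}(z)$ leaves exactly one preimage $y=x_{1}\dots x_{n}z$ when $p_{x_{n},z_{1}}\neq0$ and none otherwise, so the lower estimate acquires the factor $\mu\left(\left\{ z\middle|\; p_{x_{n},z_{1}}\neq0\right\} \right)$, which need not be $1$ and is not obviously absorbed into $e^{-\eta_{\psi}(n)}$; moreover irreducibility, which you invoke to control it, is not assumed in this section of the paper. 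Obtaining the bound with literally the constant $e^{\pm\eta_{\psi}(n)}$ is exactly the bookkeeping done in \cite{MR1819804}, and you rightly flag that you are deferring to it --- but be aware that your eigenmeasure computation alone does not yet deliver the inequality in the precise form stated.
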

If additionally $\psi\in\mathcal{C}(X)$ is a normalised potential,
i.e. $\mathcal{L}_{\psi}\1=\1$, then $P\left(\psi\right)=0$ and
any weak Gibbs measure $\mu\in\mathcal{M}(X)$ for $\psi$ is called
a \emph{$g$-measure} for $\psi$. The notion of $g$-measures was
introduced by M. Keane in \cite{Keane:72}. Since one necessarily
has $P\left(\psi\right)=0>\psi$, it follows from Lemma \ref{lemma:weakGibbsProperty}
that $\mu$ is supported on $X$ and non-atomic. The measure $\mu$
is also $\theta$-invariant, i.e. $\mu\circ\theta^{-1}=\mu$, since
for all $B\in\mathcal{B}$ one has  
\[
\mu\circ\theta^{-1}\left(B\right)=\int\1_{B}\circ\theta\:\mbox{d}\mu=\int\mathcal{L}_{\varphi}\left(\1_{B}\circ\theta\right)\:\mbox{d}\mu=\int\1_{B}\cdot\mathcal{L}_{\varphi}\left(\1\right)\:\mbox{d}\mu=\mu\left(B\right).
\]
 It is also characterised as an equilibrium measure (cf. \cite{Ledrappier:1974}),
i.e.
\[
P(\varphi)=\sup\left\{ h_{\eta}+\eta(\varphi):\eta\in\mathcal{M}_{\theta}(X)\right\} =h_{\mu}+\mu(\varphi),
\]
 where $\mathcal{M}_{\theta}(X)$ are the $\theta$-invariant probability
measures on $X$ and $h_{\eta}$ denotes the measure theoretical entropy
(see e.g. \cite{Walters:2000} for the definition). 

If $\psi\in\mathcal{C}\left(X\right)$ with $P\left(\psi\right)=0$
is H{\"o}lder continuous then by the Ruelle-Perron-Frobenius theory
one knows that there exists a $\widetilde{\psi}\in\mathcal{C}\left(X\right)$
cohomologous to $\psi$ such that $\mathcal{L}_{\widetilde{\psi}}\1=\1$.
In fact, $\widetilde{\psi}:=\psi+\log h_{\psi}-\log\left(h_{\psi}\circ\theta\right)$,
where $h_{\psi}$ is a (positive) eigenfunction of $\mathcal{L}_{\psi}$
for the eigenvalue $1$. Hence,  the unique Gibbs measure for the potential
$\psi$ is also a $g$-measure for $\widetilde{\psi}$. 

Next, one defines the \emph{free Helmholtz energy }for the pair $\left(\varphi,\mu\right)$
with $\varphi\in\mathcal{C}\left(X\right)$ and $\mu$ a $g$-measure
for $\psi$ by 
\[
H_{n}(t):=\frac{1}{n}\log\int\exp\left(t\,S_{n}\varphi\right)\,d\mu,
\]
 which is obviously finite for any $n\in\mathbb{N}$ and $t\in\mathbb{R}$.
The following proposition guarantees that $H(t):=\lim_{R\to\infty}H_{R}(t)$
exists and is finite for all $t\in\mathbb{R}$. Thus, by H{\"o}lder's
inequality one observes that $H_{n}$, $n\in\mathbb{N}$, and consequently
also $H$ are convex finite functions. Indeed, the free energy and
the pressure function are related as follows. 
\begin{prop}
[\cite{MR1819804}] \label{Thm:freeEnergy} Let $\varphi,\psi\in\mathcal{C}\left(X\right)$,
$\mathcal{L}_{\psi}\1=\1$ and let $\mu\in\mathcal{M}(X)$ be a $g$-measure
for the potential $\psi$. The free Helmholtz energy $H$ is determined
by the pressure function via 
\[
H(t)=P\left(t\varphi+\psi\right).
\]

\end{prop}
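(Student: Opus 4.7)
The plan is to compute $H_n(t) = \tfrac{1}{n}\log\int\exp(tS_n\varphi)\,d\mu$ directly by partitioning $X$ into $n$-cylinders and using the weak Gibbs property of $\mu$ to replace each $\mu(C)$ by $\exp(S_n\psi(x_C))$, which combined with the integrand yields exactly the partition function for $t\varphi+\psi$ entering the definition of $P(t\varphi+\psi)$.

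Concretely, I would fix $n\in\mathbb{N}$, choose for each $C\in C_n$ some $x_C\in C$ and write
\[
\int\exp(tS_n\varphi)\,d\mu = \sum_{C\in C_n}\int_C\exp(tS_n\varphi(x))\,d\mu(x).
\]
On each $n$-cylinder the oscillation of $S_n(t\varphi)$ is at most $|t|\eta_\varphi(n)$, so
\[
e^{-|t|\eta_\varphi(n)}\exp(tS_n\varphi(x_C))\,\mu(C)
\leq \int_C e^{tS_n\varphi}\,d\mu
\leq e^{|t|\eta_\varphi(n)}\exp(tS_n\varphi(x_C))\,\mu(C).
\]
Next, since $\mu$ is a $g$-measure for $\psi$ one has $P(\psi)=0$, so Fact~\ref{lemma:weakGibbsProperty} gives
\[
e^{-\eta_\psi(n)}\exp(S_n\psi(x_C))\leq \mu(C)\leq e^{\eta_\psi(n)}\exp(S_n\psi(x_C)).
\]
Multiplying these two estimates and summing over $C\in C_n$ produces
\[
e^{-\delta(n)}\sum_{C\in C_n}\exp(S_n(t\varphi+\psi)(x_C))
\;\leq\; \int e^{tS_n\varphi}\,d\mu \;\leq\;
e^{\delta(n)}\sum_{C\in C_n}\exp(S_n(t\varphi+\psi)(x_C)),
\]
where $\delta(n):=|t|\eta_\varphi(n)+\eta_\psi(n)$.

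The final step is to compare $\sum_{C}\exp(S_n(t\varphi+\psi)(x_C))$ with the quantity $\sum_{C}\exp(\sup_{x\in C}S_n(t\varphi+\psi)(x))$ that appears in \eqref{Def:pressure}; these differ at most by the factor $\exp(\eta_{t\varphi+\psi}(n))$. Taking logarithms, dividing by $n$, and letting $n\to\infty$ gives
\[
H(t)=\lim_{n\to\infty}H_n(t)=P(t\varphi+\psi),
\]
since all the variation error terms satisfy $\eta_\cdot(n)/n\to 0$ by uniform continuity of the potentials on the compact subshift and the bound $\eta_f(n)\leq\sum_{j=1}^n\var_j(f)$, which together with Ces\`aro convergence forces $\eta_f(n)/n\to 0$. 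The simultaneous squeeze also shows that the limit $H(t)$ exists, as was claimed just before the proposition. The only subtle point is handling the $\var_n(S_n(\cdot))$ quantities, but this is a routine uniform-continuity/Ces\`aro argument rather than a genuine obstacle; the proof is essentially a textbook transference between the Gibbs formula for $\mu$ and the variational definition of pressure.
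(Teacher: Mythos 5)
Your argument is correct and complete. Note that the paper itself does not prove this proposition --- it is stated with the citation [Kes01] and no proof is given --- so there is nothing internal to compare against; your write-up is essentially the standard transference argument one would find in that reference. All the steps check out: the $n$-cylinders form a finite partition of the subshift, the oscillation of $S_n(t\varphi)$ on an $n$-cylinder is exactly $|t|\,\eta_\varphi(n)$, the weak Gibbs inequality of Fact~\ref{lemma:weakGibbsProperty} applies with $P(\psi)=0$ because $\mathcal{L}_\psi\1=\1$ forces the eigenvalue $\lambda=e^{P(\psi)}$ to be $1$, and replacing the sample points $x_C$ by the suprema in \eqref{Def:pressure} costs only a factor $\exp(\eta_{t\varphi+\psi}(n))$. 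The one point that genuinely needed care --- that $\eta_f(n)/n\to 0$ for each of the three potentials involved --- you handle correctly via $\eta_f(n)\le\sum_{j=1}^{n}\var_j(f)$, uniform continuity on the compact shift space, and Ces\`aro convergence; this also recovers the assertion of Fact~\ref{lemma:weakGibbsProperty} for $\psi$ itself. The squeeze then simultaneously yields existence of $\lim_n H_n(t)$ and the identity $H(t)=P(t\varphi+\psi)$, as claimed in the paragraph preceding the proposition.
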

Also, by the convexity of the pressure function and the fact that
$\mu$ is an equilibrium measure, one finds that $\mu\left(\varphi\right)\in\nabla H\left(0\right)$,
where $\nabla H\left(0\right)$ denotes the sub-differential of $H$
in $0$. In particular, if $H$ is differentiable in $0$ then $H'\left(0\right)=p'\left(0\right)=\mu\left(\varphi\right)$. 
\begin{thm}
[Theorem II.6.3 of \cite{Ellis:1985}] \label{Theorem-Ellis}The following
statements are equivalent: 
\begin{itemize}
\item [{\rm (a)}] $H$ is differentiable in $0$ with $H'(0)=M$.
\item [{\rm (b)}] For every $\varepsilon>0$ there exists a number $\delta(\varepsilon)>0$
such that for all $n$ sufficiently large
\[
\mu\left(\left\{ x\in X\middle|\; \left|\frac{S_{n}\varphi\left(x\right)}{n}-M\right|\geq\varepsilon\right\} \right)\leq\exp(-n\delta(\varepsilon)).
\]

\end{itemize}
\end{thm}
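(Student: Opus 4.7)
The plan is to prove the two implications separately, using the Chernoff/Cram\'er exponential-moment machinery that underlies classical large-deviation theory. Throughout I would exploit the fact that $X$ is compact, so any $\varphi\in\mathcal{C}(X)$ is bounded; this lets me crudely estimate integrals over the ``bad'' sets.

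For the direction (a) $\Rightarrow$ (b), fix $\varepsilon>0$ and $t>0$. Markov's inequality applied to $\exp\bigl(tS_{n}\varphi\bigr)$ gives
\[
\mu\bigl(S_{n}\varphi\geq n(M+\varepsilon)\bigr)\leq\exp\bigl(-tn(M+\varepsilon)\bigr)\int\exp\bigl(tS_{n}\varphi\bigr)\,d\mu=\exp\bigl(n\left(H_{n}(t)-t(M+\varepsilon)\right)\bigr).
\]
Because $H(t)=\lim_{n\to\infty}H_{n}(t)$ exists (Proposition~\ref{Thm:freeEnergy}) and $H$ is convex with $H(0)=0$ and $H'(0)=M$, one can choose $t>0$ so small that $H(t)\leq t(M+\varepsilon/2)$. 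Then for all sufficiently large $n$, $H_{n}(t)-t(M+\varepsilon)\leq -t\varepsilon/4$, producing the desired exponential decay. The lower-tail estimate $\mu(S_{n}\varphi\leq n(M-\varepsilon))$ is obtained by the same argument with $t<0$ of small modulus. Taking the minimum of the two resulting exponents yields a single $\delta(\varepsilon)>0$.

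For the direction (b) $\Rightarrow$ (a), my strategy is to show that $H(t)/t\to M$ as $t\to 0^{\pm}$; combined with convexity of $H$ on $\mathbb{R}$ this immediately yields differentiability at $0$ with $H'(0)=M$. For $t>0$ I split the moment integral along the ``good'' and ``bad'' sets $\{|S_{n}\varphi/n-M|<\varepsilon\}$ and its complement. The good-set contribution is bounded above by $\exp(tn(M+\varepsilon))$ and bounded below by $\exp(tn(M-\varepsilon))\bigl(1-e^{-n\delta(\varepsilon)}\bigr)$, using (b). The bad-set contribution, thanks to boundedness of $\varphi$, is at most $\exp\bigl(tn\sup|\varphi|-n\delta(\varepsilon)\bigr)$. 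Choosing $t$ so small that $t(\sup|\varphi|-M-\varepsilon)<\delta(\varepsilon)/2$, the good piece dominates, giving $H_{n}(t)\leq t(M+\varepsilon)+o(1)$, and the lower bound yields $H_{n}(t)\geq t(M-\varepsilon)+o(1)$. Passing to the limit in $n$ and then sending $\varepsilon\to 0$ gives $H(t)/t\to M$ as $t\to 0^{+}$; the negative-$t$ case is symmetric.

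The main obstacle will be making the admissible window of $t$ explicit in the second direction: one needs $t$ small enough, uniformly in $n$, so that the bad-set contribution to the moment generating function is dominated by the good-set contribution. Boundedness of $\varphi$, guaranteed by compactness of $X$, is the crucial ingredient that closes this estimate; without it one would need additional tail hypotheses on $\varphi$ with respect to $\mu$.
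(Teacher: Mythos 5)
Your argument is correct. The paper itself gives no proof of this statement --- it is quoted verbatim from Ellis (Theorem II.6.3) --- and your two-sided Chernoff/moment-splitting argument is essentially the standard proof of that result, specialised to $d=1$ and to bounded $\varphi$ (which, as you note, compactness of $X$ guarantees, so the crude bound $\bigl|S_{n}\varphi\bigr|\leq n\sup\left|\varphi\right|$ on the bad set is available and the admissible window of $t$ is explicit). The only point worth stating explicitly is that $H(0)=0$ because $\mu$ is a probability measure, so that $H(t)/t$ really is the difference quotient of $H$ at $0$ and your sandwich $M-\varepsilon\leq H(t)/t\leq M+\varepsilon$ for $0<\left|t\right|\leq t_{0}(\varepsilon)$ indeed gives $H'(0)=M$.
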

Combining Proposition~\ref{Thm:freeEnergy} and Theorem~\ref{Theorem-Ellis},
one obtains the main result of this section which implies the exponential decay property for
$P_{k}^{\varepsilon}$ that is required in Theorem~\ref{thm:LokaleAR_IntegralMalBasis}.
\begin{prop}\label{prop:deviationProperty}
Let $\varphi,\psi\in\mathcal{C}\left(X\right)$, $\mathcal{L}_{\psi}\1=\1$
and let $\mu\in\mathcal{M}(X)$ be a $g$-measure for the potential
$\psi$ and suppose that $t\mapsto P\left(t\varphi+\psi\right)$ is
differentiable in $0$. Then for every $\varepsilon>0$ there exists
a number $\delta(\varepsilon)>0$ such that for $n$ sufficiently
large
\[
\mu\left(\left\{ x\in X\middle|\; \left|\frac{S_{n}\varphi\left(x\right)}{n}-\mu\left(\varphi\right)\right|\geq\varepsilon\right\} \right)\leq\exp(-n\delta(\varepsilon)).
\]
\end{prop}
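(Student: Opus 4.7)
The plan is to assemble the conclusion directly from the three ingredients already in place: Proposition~\ref{Thm:freeEnergy}, Theorem~\ref{Theorem-Ellis}, and the equilibrium-measure characterisation of $\mu$. The only real work is to verify that the constant $M$ appearing in Theorem~\ref{Theorem-Ellis} may indeed be taken to be $\mu(\varphi)$.

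First, I would set $H(t):=\lim_{n\to\infty}H_n(t)$ and invoke Proposition~\ref{Thm:freeEnergy} to rewrite $H(t)=P(t\varphi+\psi)$ on all of $\mathbb{R}$. The hypothesis of the proposition is precisely that the right-hand side is differentiable at $t=0$, so $H$ is differentiable at $0$. This hands us, via Theorem~\ref{Theorem-Ellis}, the deviation inequality for the value $M:=H'(0)$, leaving only the identification of $M$ to be carried out.

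Next, I would identify $H'(0)=\mu(\varphi)$ by convex analysis. The function $H$ is convex and finite on $\mathbb{R}$ (noted in the paragraph before Proposition~\ref{Thm:freeEnergy}), so its subdifferential $\partial H(0)$ is a non-empty compact interval, and differentiability at $0$ forces $\partial H(0)=\{H'(0)\}$. As recorded in the text just before Theorem~\ref{Theorem-Ellis}, the equilibrium property of the $g$-measure $\mu$ for $\psi$ yields $\mu(\varphi)\in\partial H(0)$; indeed, for any $t\in\mathbb{R}$ the variational principle gives
\[
H(t)=P(t\varphi+\psi)\geq h_{\mu}+\mu(t\varphi+\psi)=\mu(\varphi)\,t+\bigl(h_{\mu}+\mu(\psi)\bigr)=\mu(\varphi)\,t+H(0),
\]
so $t\mapsto \mu(\varphi)\,t$ is a supporting affine functional at $0$. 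Combining with uniqueness, $H'(0)=\mu(\varphi)$.

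Finally, with $M=\mu(\varphi)$ secured, Theorem~\ref{Theorem-Ellis} delivers the claim: for every $\varepsilon>0$ there is $\delta(\varepsilon)>0$ such that
\[
\mu\Bigl(\Bigl\{x\in X\;\Big|\;\bigl|\tfrac{1}{n}S_n\varphi(x)-\mu(\varphi)\bigr|\geq\varepsilon\Bigr\}\Bigr)\leq\exp(-n\delta(\varepsilon))
\]
for all sufficiently large $n$. There is no genuine obstacle here: the whole proof is a glue argument. The one place to be careful is the subdifferential step, where one must rule out that the equilibrium property only gives one-sided information; this is handled by applying the variational inequality for both $t>0$ and $t<0$, as shown above, so that $\mu(\varphi)$ sits in the interior of $\partial H(0)$ in the appropriate sense and coincides with $H'(0)$ under the differentiability hypothesis.
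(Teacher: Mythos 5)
Your proposal is correct and follows essentially the same route as the paper: the paper likewise obtains the result by combining Proposition~\ref{Thm:freeEnergy} with Theorem~\ref{Theorem-Ellis}, using the equilibrium-measure property to place $\mu(\varphi)$ in the subdifferential $\nabla H(0)$ and the differentiability hypothesis to conclude $H'(0)=\mu(\varphi)$. You have merely written out the supporting-line argument that the paper leaves implicit in the remark preceding Theorem~\ref{Theorem-Ellis}.
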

 
 \section{The escape rate as an induced pressure}\label{chap:AR-als-induzierter-Druck}

The escape rate can be formulated as an induced pressure as defined by Jaerisch, Kesseb\"ohmer and Lamei \cite{Jaerisch:2014}. The following definition mirrors \cite[Definition 1.1]{Jaerisch:2014}. Please note that the notation deviates slightly from that in \cite{Jaerisch:2014} in order to accommodate for the fact that $\varphi$ already denotes the ceiling function.

\begin{defn}
	Let $\left(X,\theta\right)$ with $X\coloneqq S_{P}^{\mathbb{N}}$ be a onesided subshift on an alphabet $S$ with transition matrix $P$. For functions $p,\varphi:X\rightarrow\mathbb{R}$ with $\varphi\geq0$ and a set $\mathcal{C}\subset\Sigma_{P}^{*}$, the \emph{$\varphi$-induced pressure of $p$} (with respect to $\mathcal{C}$) for $\eta>0$ is defined as 
	\begin{equation}
	\mathcal{P}_{\varphi}\left(p,\mathcal{C}\right)\coloneqq\limsup_{t\rightarrow\infty}\frac{1}{t}\cdot\log\sum_{\substack{w\in\mathcal{C}\\
	t-\eta<S_{w}\varphi\leq t
	}
	}\exp\left(S_{w}p\right),\label{eq:induzierter-Druck_Definition}
	\end{equation}
	 with $S_{w}p\coloneqq\sup_{x\in\left[w\right]}\sum_{k=0}^{\left|w\right|-1}p\circ\theta^{k}\left(x\right)$, and $\left|w\right|$ denoting the length of the word $w$.
\end{defn}
\begin{rem}
	\label{rem:Induzierter-Druck_unabhaengig-von-eta}According to \cite{Jaerisch:2014}, the induced pressure does not depend on the choice of $\eta>0$.
\end{rem}
In order to phrase the escape rate of a semiflow under a ceiling function $\varphi$ as an induced pressure, it is necessary to choose suitable $p$ and $\mathcal{C}$.
\begin{assumption}
	\label{as:Gibbs-Shift}Let $\left(X,\theta\right)$ with $X\coloneqq S_{P}^{\mathbb{N}}$ be a onesided subshift on an at most countable alphabet $S$ with transition matrix $P$. Let $\mu$ be an invariant probability measure for the subshift for which there is a function 	$p:S_{P}^{\mathbb{N}}\rightarrow\mathbb{R}$ and a non-decreasing sequence of constants $K_{n}>0$ with $\lim_{n\to \infty}\log(K_{n})/n=0$ such that 
	\begin{equation}
	K_{|w|}^{-1}\leq\frac{e^{S_{w}p}}{\mu\left(\left[w\right]\right)}\leq K_{|w|}
	\label{eq:Gibbs-Eigenschaft}
	\end{equation}
	holds for every $w\in\Sigma_{P}^{*}$.
\end{assumption}
\begin{rem}
	Assumption ~\ref{as:Gibbs-Shift} holds in particular for Markov shifts with finite alphabet. In that situation one chooses 
	\[
	p:S_{P}^{\mathbb{N}}\rightarrow\mathbb{R},\quad\left(x_{1},x_{2},\dots\right)\mapsto\log p_{x_{1}x_{2}}
	\]
	and derives a constant $K$ from 
	\[
	e^{S_{w}p}=\prod_{l=1}^{k-1}p_{w_{l}w_{l+1}}\cdot\sup_{s\in S}p_{w_{k}s}=\frac{\sup_{s\in S}p_{w_{k}s}}{\mu\left(\left[w_{1}\right]\right)}\cdot\mu\left(\left[w\right]\right),
	\]
	and the fact that the alphabet is finite.
 \end{rem}
\begin{prop}\label{prop:AR-als-induzierter-Druck}
	Let Assumption~\ref{as:Gibbs-Shift} be fulfilled and assume that there is an $m\in\mathbb{N}$ such that the hole $A\subset X$ can be written as a union of $m$-cylinders. Let $\varphi:X\rightarrow\mathbb{R}$ be a positive, measurable ceiling function that is bounded and bounded away from zero. Choosing $\mathcal{C}$ as  
\begin{align}
\mathcal{C} & \coloneqq\left\{ w\in\Sigma_{P}^{*}\middle|\; \left|w\right|\geq m\,\,\mathrm{and}\,\,\forall0\leq k\leq\left|w\right|-m:\,\theta^{k}\left(\left[w\right]\right)\cap A=\emptyset\right\}, \label{eq:C-zu-Loch-A}
\end{align} 
 and assuming that the escape rate $\rho\left(A,\varphi\right)$ exists, one obtains 
\[
\mathcal{P}_{\varphi}\left(p,\mathcal{C}\right)=-\rho\left(A,\varphi\right).
\]
\end{prop}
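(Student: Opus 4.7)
The plan is to use Lemma~\ref{lem:AR_Nur-NA-Nt} to reexpress $\rho(A,\varphi)$ symbolically and then to match the resulting quantity with the induced-pressure sum via the Gibbs inequality from Assumption~\ref{as:Gibbs-Shift}.

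By Lemma~\ref{lem:AR_Nur-NA-Nt}, $\rho(A,\varphi)=\lim_{t\to\infty}-\frac{1}{t}\log\mu(E_t)$ with $E_t:=\{x:N_A(x)\ge N_t^\varphi(x)\}$. Since $A$ is a union of $m$-cylinders, the event $\{N_A\ge n\}$ is determined by the first $n+m-1$ coordinates and in fact equals $\bigsqcup_{w\in\mathcal{C},\,|w|=n+m-1}[w]$: this is precisely why the condition $\theta^k([w])\cap A=\emptyset$ for $0\le k\le|w|-m$ appears in the definition of $\mathcal{C}$. Consequently,
$$E_t\;=\;\bigsqcup_{w\in\mathcal{C}}\,[w]\cap\bigl\{x:N_t^\varphi(x)=|w|-m+1\bigr\}.$$
For any $x$ in such an intersection with $|w|=n+m-1$, the defining inequalities $S_{n-1}\varphi(x)\le t<S_n\varphi(x)\le t+\sup\varphi$ together with the trailing $m-1$ summands force $S_{|w|}\varphi(x)\in(t,\,t+m\sup\varphi]$, and the uniform bounds on $\varphi$ force $|w|=O(t)$, so $\log K_{|w|}=o(t)$.

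For the upper bound on $-\tfrac{1}{t}\log\mu(E_t)$, I would pick $\eta:=m\sup\varphi$ (allowed by Remark~\ref{rem:Induzierter-Druck_unabhaengig-von-eta}) and note that every $w$ appearing in the above decomposition contributes to the induced-pressure sum at time $t':=t+m\sup\varphi$. Combining with $\mu([w])\le K_{|w|}e^{S_w p}$ and $\log K_{|w|}=o(t)$ gives $\mu(E_t)\le e^{o(t)}\sum_{w\in\mathcal{C},\,t'-\eta<S_w\varphi\le t'}e^{S_w p}$, and hence $\rho(A,\varphi)\ge-\mathcal{P}_\varphi(p,\mathcal{C})$. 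For the reverse bound, I would restrict to $w\in\mathcal{C}$ with $S_w\varphi\in(t+m\sup\varphi,\,t+(m+1)\sup\varphi]$: the uniform bounds on $\varphi$ then imply $S_{|w|-m+1}\varphi(x)>t$ for every $x\in[w]$, so $N_t^\varphi(x)\le|w|-m+1\le N_A(x)$ and thus $[w]\subseteq E_t$. Summing the lower Gibbs inequality $\mu([w])\ge K_{|w|}^{-1}e^{S_w p}$ over these disjoint cylinders yields $\mu(E_t)\ge e^{-o(t)}\sum e^{S_w p}$ and the matching inequality $\rho(A,\varphi)\le-\mathcal{P}_\varphi(p,\mathcal{C})$, once more invoking Remark~\ref{rem:Induzierter-Druck_unabhaengig-von-eta}.

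The hard part is the ambiguity in the definition of $S_w\varphi$: the paper defines $S_w p$ as $\sup_{x\in[w]}\sum_{k=0}^{|w|-1}p\circ\theta^k(x)$, and the analogous convention for $S_w\varphi$ means that the matching between $S_{|w|}\varphi(x)$ at some $x\in[w]$ and the window $(t-\eta,t]$ is controlled only up to the oscillation of $S_{|w|}\varphi$ across $[w]$, which for purely measurable $\varphi$ can grow linearly in $|w|$. Under the mild regularity implicit in the Gibbs setting (for instance continuity of $\varphi$ together with summable variations), this oscillation is $o(|w|)=o(t)$ and is absorbed into the sub-exponential $K_{|w|}$ factors. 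The remaining bookkeeping --- the additive constants of order $m\sup\varphi$ in the windows, the independence of $\mathcal{P}_\varphi(p,\mathcal{C})$ from $\eta$, and the passage from a limit on the $\mu(E_t)$ side to the limsup in the definition of $\mathcal{P}_\varphi$ --- is then routine.
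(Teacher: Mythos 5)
Your proposal follows essentially the same route as the paper's proof: reformulating $\mathcal{C}$ in terms of $N_{A}$, sandwiching the set $\left\{ N_{A}\geq N_{t}^{\varphi}\right\}$ against the unions of cylinders entering the induced-pressure sum at times shifted by constants of order $m\sup\varphi$, converting $e^{S_{w}p}$ into $\mu\left(\left[w\right]\right)$ via the Gibbs inequality with subexponential error, and concluding with Lemma~\ref{lem:AR_Nur-NA-Nt} and the $\eta$-independence of the induced pressure. The only points to tighten are minor: the cylinders in your lower bound need not be disjoint (words whose $\varphi$-sums lie in a window of width $\sup\varphi$ can be nested, though with multiplicity at most $\left\lceil \sup\varphi/\inf\varphi\right\rceil$, which the paper handles with its explicit $\left\lceil \eta/\inf\varphi\right\rceil$ factor), and the oscillation of $S_{w}\varphi$ over $\left[w\right]$ that you rightly flag is glossed over in the paper's own argument in exactly the same way.
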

\begin{proof}
First note that $\mathcal{C}$ can also be written as 
\begin{align*}
\mathcal{C} & =\left\{ w\in\Sigma_{P}^{*}\middle|\; \left|w\right|\geq m\,\,\mathrm{and}\,\,\forall0\leq k\leq\left|w\right|-m:\,\theta^{k}\left(\left[w\right]\right)\cap A=\emptyset\right\} \\
 & =\left\{ w\in\Sigma_{P}^{*}\middle|\; \left|w\right|\geq m\,\mathrm{and}\,\forall x\in\left[w\right]:\, N_{A}\left(x\right)>\left|w\right|-m\right\} \\
 & =\left\{ w\in\Sigma_{P}^{*}\middle|\; \left|w\right|\geq m\,\mathrm{and}\,\exists x\in\left[w\right]:\, N_{A}\left(x\right)>\left|w\right|-m\right\}. 
\end{align*}
 Using equation~\eqref{eq:Gibbs-Eigenschaft}, one obtains
\begin{align*}
 & \frac{-1}{t}\cdot\log K_{\lceil  t/\inf \varphi\rceil}+\frac{1}{t}\cdot\log\sum_{\substack{w\in\mathcal{C}\\
t-\eta<S_{w}\varphi\leq t
}
}\mu\left(\left[w\right]\right)\\
\leq & \frac{1}{t}\cdot\log\sum_{\substack{w\in\mathcal{C}\\
t-\eta<S_{w}\varphi\leq t
}
}e^{S_{w}p}\\
\leq & \frac{1}{t}\cdot\log K_{\lceil  t/\inf \varphi \rceil}+\frac{1}{t}\cdot\log\sum_{\substack{w\in\mathcal{C}\\
t-\eta<S_{w}\varphi\leq t
}
}\mu\left(\left[w\right]\right).
\end{align*}
 Since the term  $1/t\cdot\log K_{\lceil  t/\inf \varphi \rceil}$ tends to zero for $t\rightarrow\infty$, it can be neglected when looking at the limit. Furthermore, without loss of generality one can assume that $t\geq m\cdot\sup\varphi$ and, due to Remark~\ref{rem:Induzierter-Druck_unabhaengig-von-eta}, that $\eta>\sup\varphi$.
 Under these conditions one obtains
\begin{align*}
  \bigcup_{\substack{w\in\mathcal{C}\\
t-\eta<S_{w}\varphi\leq t
}
}\left[w\right]
= & \left\{ x\in X\middle|\; \exists w\in\Sigma_{P}^{*}:\begin{array}{l}
x\in\left[w\right],\, t-\eta<S_{w}\varphi\leq t,\,\left|w\right|\geq m,\\
\forall y\in\left[w\right]:N_{A}\left(y\right)>\left|w\right|-m
\end{array}\right\} \\
\subset & \left\{ x\in X\middle|\; \exists w\in\Sigma_{P}^{*}:\begin{array}{l}
x\in\left[w\right],\, t-\eta<S_{w}\varphi,\\
\left|w\right|\geq m,\, N_{A}\left(x\right)>\left|w\right|-m
\end{array}\right\} \\
\subset & \left\{ x\in X\middle|\; \exists w\in\Sigma_{P}^{*}:\begin{array}{l}
x\in\left[w\right],\,\left|w\right|\geq N_{t-\eta-\sup\varphi}\left(x\right),\\
\left|w\right|\geq m,\, N_{A}\left(x\right)>\left|w\right|-m
\end{array}\right\} \\
\subset & \left\{ x\in X\middle|\; \exists w\in\Sigma_{P}^{*}:x\in\left[w\right],\, N_{A}\left(x\right)\geq N_{t-\eta-\sup\varphi}\left(x\right)-m\right\} \\
= & \left\{ x\in X\middle|\; N_{A}\left(x\right)\geq N_{t-\eta-\sup\varphi}\left(x\right)-m\right\} \\
\subset & \left\{ x\in X\middle|\; N_{A}\left(x\right)\geq N_{t-\eta-\left(m+1\right)\cdot\sup\varphi}\left(x\right)\right\} 
\end{align*}
 and  
\begin{align*}
  \bigcup_{\substack{w\in\mathcal{C}\\
t-\eta<S_{w}\varphi\leq t
}
}\left[w\right]
= & \left\{ x\in X\middle|\; \exists w\in\Sigma_{P}^{*}:\begin{array}{l}
x\in\left[w\right],\, t-\eta<S_{w}\varphi\leq t,\,\left|w\right|\geq m,\\
\exists y\in\left[w\right]:N_{A}\left(y\right)>\left|w\right|-m
\end{array}\right\} \\
\overset{\left(\star\right)}{\supset} & \left\{ x\in X\middle|\; \exists w\in\Sigma_{P}^{*}:\begin{array}{l}
x\in\left[w\right],\,\left|w\right|=N_{t}\left(x\right)-1,\,\left|w\right|\geq m,\\
\exists y\in\left[w\right]:N_{A}\left(y\right)>\left|w\right|-m
\end{array}\right\} \\
\supset & \left\{ x\in X\middle|\; \exists w\in\Sigma_{P}^{*}:\begin{array}{l}
x\in\left[w\right],\,\left|w\right|=N_{t}\left(x\right)-1,\\
\left|w\right|\geq m,\, N_{A}\left(x\right)>\left|w\right|-m
\end{array}\right\} \\
= & \left\{ x\in X\middle|\; \exists w\in\Sigma_{P}^{*}:\begin{array}{l}
x\in\left[w\right],\,\left|w\right|=N_{t}\left(x\right)-1,\\
N_{t}\left(x\right)-1\geq m,\, N_{A}\left(x\right)\geq N_{t}\left(x\right)-m
\end{array}\right\} \\
\overset{\left(\star\star\right)}{=} & \left\{ x\in X\middle|\; N_{A}\left(x\right)\geq N_{t}\left(x\right)-m\right\} \\
\supset & \left\{ x\in X\middle|\; N_{A}\left(x\right)\geq N_{t-m\cdot\inf\varphi}\left(x\right)\right\} .
\end{align*}
For $\left(\star\right)$ one uses $t-\eta\leq t-\sup\varphi\leq S_{N_{t}-1}\left(x\right)<t$,
 and for $\left(\star\star\right)$ one takes advantage of the fact that $N_{t}\left(x\right)-1\geq m$ is automatically fulfilled due to $t\geq m\cdot\sup\varphi$, and that for each $x\in X$ there is a $w\in\Sigma_{P}^{*}$ with
$\left|w\right|=N_{t}\left(x\right)-1$, so that these two conditions can be omitted.

This leads to the estimates 
\begin{align*}
  \frac{1}{t}\cdot\log\sum_{\substack{w\in\mathcal{C}\\
t-\eta<S_{w}\varphi\leq t
}
}\mu\left(\left[w\right]\right)
\leq & \frac{1}{t}\cdot\log\left(\left\lceil \frac{\eta}{\inf\varphi}\right\rceil \cdot\mu\left(\left\{ x\in X\middle|\; N_{A}\geq N_{t-\eta-\left(m+1\right)\cdot\sup\varphi}\right\} \right)\right)
\end{align*}
 and 
\begin{align*}
  \frac{1}{t}\cdot\log\sum_{\substack{w\in\mathcal{C}\\
t-\eta<S_{w}\varphi\leq t
}
}\mu\left(\left[w\right]\right)
\geq & \frac{1}{t}\cdot\log\mu\left(\left\{ x\in X\middle|\; N_{A}\left(x\right)\geq N_{t-m\cdot\inf\varphi}\left(x\right)\right\} \right),
\end{align*}
 by noting that the sum counts a point in the union at least once and at most $\left\lceil \eta/\inf\varphi\right\rceil $ times. An application of Lemma~\ref{lem:AR_Nur-NA-Nt} gives that $-\rho\left(A,\varphi\right)$ is both an upper and a lower bound for  $\mathcal{P}_{\varphi}\left(p,\mathcal{C}\right)$, which in turn implies the desired result.
\end{proof}

The induced pressure can not only be formulated as in equation~\eqref{eq:induzierter-Druck_Definition}, but also in other equivalent ways, as stated in \cite{Jaerisch:2014}. One of these equivalent formulations is of particular interest, because it permits a simple proof of the sublinearity of the reciprocal escape rate $\rho^{-1} = 1/\rho$.
\begin{lem}[{{\cite[Corollary 2.2]{Jaerisch:2014}}}]
\label{lem:JKL_Corollary-2.2}Let $p,\varphi:S_{P}^{\mathbb{N}}\rightarrow\mathbb{R}$ be functions with $\varphi$ positive and bounded away from zero. The induced pressure with respect to  $\mathcal{C}\subset\Sigma_{P}^{*}$ can then be phrased as 
\begin{equation}
\mathcal{P}_{\varphi}\left(p,\mathcal{C}\right)=\inf\left\{ \beta\in\mathbb{R}\middle|\; \mathcal{P}_{\ind}\left(p-\beta\varphi,\mathcal{C}\right)\leq0\right\} .\label{eq:induzierter-Druck-inf-Formulierung}
\end{equation}
\end{lem}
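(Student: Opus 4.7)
The plan is to decompose the identity into two elementary observations: a linear shift formula for the induced pressure, and a sign-preservation property when exchanging the clock $\varphi$ for the constant clock $\ind$.

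First I would establish the shift formula
\begin{equation*}
\mathcal{P}_{\varphi}(p - \beta\varphi,\mathcal{C}) = \mathcal{P}_{\varphi}(p,\mathcal{C}) - \beta \qquad (\beta\in\mathbb{R}).
\end{equation*}
This follows directly from the definition: any word $w$ in the summation window $t-\eta < S_w\varphi \leq t$ satisfies $|\beta S_w\varphi - \beta t| \leq |\beta|\eta$, so
\begin{equation*}
e^{S_w(p-\beta\varphi)} = e^{-\beta t}\cdot e^{S_w p}\cdot c_\beta(w,t)
\end{equation*}
with a bounded multiplicative factor $c_\beta$ depending only on $\beta$ and $\eta$. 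Pulling $e^{-\beta t}$ out of the sum and dividing by $t$ shows that the $\limsup$ shifts by exactly $-\beta$. In particular $\mathcal{P}_{\varphi}(p-\beta\varphi,\mathcal{C}) \leq 0$ iff $\beta \geq \mathcal{P}_{\varphi}(p,\mathcal{C})$.

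Second, I would prove that for every function $q$,
\begin{equation*}
\mathcal{P}_{\varphi}(q,\mathcal{C}) \leq 0 \quad\Longleftrightarrow\quad \mathcal{P}_{\ind}(q,\mathcal{C}) \leq 0,
\end{equation*}
using $m\coloneqq\inf\varphi > 0$ and $M\coloneqq\sup\varphi < \infty$. For the forward implication, assume $\mathcal{P}_{\ind}(q,\mathcal{C})\leq 0$. Given $\varepsilon>0$, for all large $n$ one has $\sum_{|w|=n,\,w\in\mathcal{C}}e^{S_w q}\leq e^{n\varepsilon}$; since every $w$ contributing to $\mathcal{P}_{\varphi}$ at clock time $t$ has length in $\bigl((t-\eta)/M,\,t/m\bigr]$, the corresponding sum is at most $O(t)\cdot e^{t\varepsilon/m}$, yielding $\mathcal{P}_{\varphi}(q,\mathcal{C})\leq \varepsilon/m$; let $\varepsilon\to 0$. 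For the converse I would argue by contrapositive: if $\mathcal{P}_{\ind}(q,\mathcal{C}) = \gamma > 0$, the $\limsup$ provides a subsequence $n_k\to\infty$ with $\sum_{|w|=n_k,\,w\in\mathcal{C}}e^{S_w q}\geq e^{n_k(\gamma-\varepsilon)}$. Each such $w$ satisfies $S_w\varphi \in [n_k m,\,n_k M]$, and partitioning this interval into $O(n_k)$ windows of width $\eta$ together with pigeonhole produces $t_k\leq n_k M+\eta$ such that
\begin{equation*}
\sum_{\substack{|w|=n_k,\,w\in\mathcal{C} \\ t_k-\eta<S_w\varphi\leq t_k}} e^{S_w q} \geq \frac{1}{O(n_k)}\,e^{n_k(\gamma-\varepsilon)},
\end{equation*}
so $\mathcal{P}_{\varphi}(q,\mathcal{C}) \geq (\gamma-\varepsilon)/M > 0$ after passing to the limsup and taking $\varepsilon$ small.

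Applying this sign equivalence to $q = p - \beta\varphi$ and combining with the shift formula yields $\mathcal{P}_{\ind}(p-\beta\varphi,\mathcal{C}) \leq 0 \iff \beta \geq \mathcal{P}_{\varphi}(p,\mathcal{C})$, so taking the infimum gives the asserted identity. The main technical obstacle is the pigeonhole argument in the reverse direction of the sign equivalence: the polynomial factor $O(n_k)$ arising from the partitioning must be absorbed into the exponential rate, which is possible only because $\varphi$ is bounded both above and away from zero -- these two hypotheses simultaneously control the number of length-classes contributing to a given clock window and the bi-directional conversion between $|w|$ and $S_w\varphi$.
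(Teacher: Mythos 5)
First, a point of context: the paper does not prove this lemma at all --- it is imported verbatim from \cite[Corollary 2.2]{Jaerisch:2014} --- so you are supplying an argument where the paper offers only a citation. Your architecture (a shift formula $\mathcal{P}_{\varphi}(p-\beta\varphi,\mathcal{C})=\mathcal{P}_{\varphi}(p,\mathcal{C})-\beta$ combined with a sign equivalence between the clocks $\varphi$ and $\ind$) is a sensible route, and your second step is correct: the conversion between word length and clock time in both directions, including the pigeonhole over $\operatorname{O}(n_k)$ windows, uses the hypotheses on $\varphi$ exactly where you say it does.

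The gap is in the first step. In the paper's definition $S_{w}q=\sup_{x\in[w]}\sum_{k=0}^{|w|-1}q\circ\theta^{k}(x)$ is a supremum over the cylinder, so $S_{w}(p-\beta\varphi)$ is \emph{not} $S_{w}p-\beta S_{w}\varphi$. For $\beta\geq0$ one only gets the one-sided estimates
\[
S_{w}p-\beta S_{w}\varphi\;\leq\;S_{w}(p-\beta\varphi)\;\leq\;S_{w}p-\beta\inf_{x\in[w]}S_{|w|}\varphi(x),
\]
and the discrepancy between $S_{w}\varphi=\sup_{[w]}S_{|w|}\varphi$ (which is what the window condition $t-\eta<S_{w}\varphi\leq t$ controls) and $\inf_{[w]}S_{|w|}\varphi$ is the distortion $\var_{|w|}\bigl(S_{|w|}\varphi\bigr)$. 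For a merely bounded measurable, or even continuous, $\varphi$ this can grow linearly in $|w|$ and hence in $t$, so your multiplicative factor $c_{\beta}(w,t)$ need not be bounded and the $\limsup$ need not shift by exactly $-\beta$. Concretely, the free inequality gives $\mathcal{P}_{\varphi}(p-\beta\varphi,\mathcal{C})\geq\mathcal{P}_{\varphi}(p,\mathcal{C})-\beta$ for $\beta\geq0$, which (via your step two) yields one half of the identity; but the reverse inequality --- needed to conclude that $\beta>\mathcal{P}_{\varphi}(p,\mathcal{C})$ forces $\mathcal{P}_{\ind}(p-\beta\varphi,\mathcal{C})\leq0$ --- requires a tempered-distortion hypothesis such as $\var_{n}(S_{n}\varphi)=\oo{n}$, or a reworking of the shift step. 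This is harmless for the paper's applications (locally constant or weak-Gibbs-regular data), but it is not covered by the lemma's stated hypotheses and is not addressed in your proposal. A smaller point: you also use $\sup\varphi<\infty$, which is likewise not among the stated hypotheses, although it holds wherever the paper invokes the result.
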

In order to show the sublinearity of the reciprocal escape rate, one shows superadditivity of
\[
\varphi\mapsto\frac{1}{\mathcal{P}_{\varphi}\left(p,\mathcal{C}\right)}.
\]
\begin{prop}\label{prop:Superadditivit=0000E4t-induzierter-Druck}
Let $\varphi_{1},\varphi_{2}:X\rightarrow\mathbb{R}_{>0}$ be functions that are bounded and bounded away from zero, such that $-\infty<\mathcal{P}_{\varphi_{i}}\left(p,\mathcal{C}\right)<0$ holds for $i=1,2$. Then one obtains 
\[
\frac{1}{\mathcal{P}_{\varphi_{1}+\varphi_{2}}\left(p,\mathcal{C}\right)}\geq\frac{1}{\mathcal{P}_{\varphi_{1}}\left(p,\mathcal{C}\right)}+\frac{1}{\mathcal{P}_{\varphi_{2}}\left(p,\mathcal{C}\right)}.
\]
\end{prop}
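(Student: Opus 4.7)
The plan is to translate the statement, via Lemma \ref{lem:JKL_Corollary-2.2}, into an upper bound on $\mathcal{P}_{\varphi_1+\varphi_2}(p,\mathcal{C})$ and then to obtain that bound from convexity of $\mathcal{P}_{\ind}(\cdot,\mathcal{C})$ in the potential. Set $\beta_i := \mathcal{P}_{\varphi_i}(p,\mathcal{C}) \in (-\infty,0)$ for $i=1,2$ and $\gamma := \beta_1\beta_2/(\beta_1+\beta_2)$; since both $\beta_i<0$, $\gamma$ is well-defined and strictly negative, and because reciprocation reverses the ordering on the negative half-line, the desired inequality is equivalent to $\mathcal{P}_{\varphi_1+\varphi_2}(p,\mathcal{C}) \leq \gamma$ (the case $\mathcal{P}_{\varphi_1+\varphi_2}(p,\mathcal{C}) = -\infty$ is covered automatically under the convention $1/(-\infty)=0$).

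First, I would establish that $f \mapsto \mathcal{P}_{\ind}(f,\mathcal{C})$ is convex. For $\lambda \in (0,1)$, subadditivity of the supremum gives $S_w(\lambda f_1 + (1-\lambda)f_2) \leq \lambda S_w f_1 + (1-\lambda) S_w f_2$, and H\"older's inequality with conjugate exponents $1/\lambda$ and $1/(1-\lambda)$ yields
\[
\sum_{\substack{w \in \mathcal{C} \\ t-\eta < |w| \leq t}} e^{S_w(\lambda f_1 + (1-\lambda)f_2)} \leq \Bigl(\sum_{\substack{w \in \mathcal{C} \\ t-\eta < |w| \leq t}} e^{S_w f_1}\Bigr)^{\lambda} \Bigl(\sum_{\substack{w \in \mathcal{C} \\ t-\eta < |w| \leq t}} e^{S_w f_2}\Bigr)^{1-\lambda}.
\]
Applying $(1/t)\log$ and $\limsup_{t\to\infty}$ (and using subadditivity of $\limsup$) produces convexity. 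I would also record the monotonicity that $\beta \mapsto \mathcal{P}_{\ind}(p-\beta\varphi_i,\mathcal{C})$ is non-increasing, which is immediate since $\varphi_i > 0$ forces $S_w(p-\beta\varphi_i)$ to be pointwise non-increasing in $\beta$.

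The decisive algebraic identity is that with $\lambda_\epsilon := (\beta_2+\epsilon)/\bigl((\beta_1+\epsilon)+(\beta_2+\epsilon)\bigr)$ and $\gamma_\epsilon := (\beta_1+\epsilon)(\beta_2+\epsilon)/\bigl((\beta_1+\epsilon)+(\beta_2+\epsilon)\bigr)$, chosen for $\epsilon>0$ so small that $\beta_i+\epsilon<0$, one has $\lambda_\epsilon \in (0,1)$ and $\lambda_\epsilon(\beta_1+\epsilon) = (1-\lambda_\epsilon)(\beta_2+\epsilon) = \gamma_\epsilon$, so that
\[
p - \gamma_\epsilon(\varphi_1+\varphi_2) = \lambda_\epsilon\bigl(p - (\beta_1+\epsilon)\varphi_1\bigr) + (1-\lambda_\epsilon)\bigl(p - (\beta_2+\epsilon)\varphi_2\bigr).
\]
By the infimum characterization of $\beta_i$ in Lemma \ref{lem:JKL_Corollary-2.2} together with the monotonicity, $\mathcal{P}_{\ind}(p-(\beta_i+\epsilon)\varphi_i,\mathcal{C}) \leq 0$; convexity then yields $\mathcal{P}_{\ind}(p-\gamma_\epsilon(\varphi_1+\varphi_2),\mathcal{C}) \leq 0$; and a second application of Lemma \ref{lem:JKL_Corollary-2.2} gives $\mathcal{P}_{\varphi_1+\varphi_2}(p,\mathcal{C}) \leq \gamma_\epsilon$. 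Letting $\epsilon \downarrow 0$ produces $\mathcal{P}_{\varphi_1+\varphi_2}(p,\mathcal{C}) \leq \gamma$, and rewriting $1/\gamma = 1/\beta_1 + 1/\beta_2$ completes the argument.

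The main technical point is the convexity of $\mathcal{P}_{\ind}(\cdot,\mathcal{C})$: although the H\"older argument is routine, one must be careful that only one-sided $S_w$-subadditivity holds (equality can fail because the suprema defining $S_w f_1$ and $S_w f_2$ may be attained at different points of the cylinder $[w]$). The $\epsilon$-perturbation is the clean way to bypass the possibility that the infimum defining $\beta_i$ in Lemma \ref{lem:JKL_Corollary-2.2} is not attained; everything else is a formal conversion of the convexity bound into the harmonic-mean inequality.
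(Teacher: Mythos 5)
Your proof is correct and follows essentially the same route as the paper: both reduce the claim via Lemma~\ref{lem:JKL_Corollary-2.2} to showing $\mathcal{P}_{\ind}\bigl(p-\gamma(\varphi_1+\varphi_2),\mathcal{C}\bigr)\leq 0$ for the harmonic-mean value $\gamma$, and both obtain this from convexity of $f\mapsto\mathcal{P}_{\ind}(f,\mathcal{C})$ applied to the same convex combination of $p-\beta_1\varphi_1$ and $p-\beta_2\varphi_2$ (the paper cites \cite[Proposition 2.1]{Jaerisch:2014} for convexity where you reprove it via H\"older). Your $\epsilon$-perturbation is a small refinement: the paper evaluates directly at $\beta_i$ and asserts $\mathcal{P}_{\ind}(p-\beta_i\varphi_i,\mathcal{C})=0$, which implicitly uses that the infimum in Lemma~\ref{lem:JKL_Corollary-2.2} is attained, whereas your monotonicity-plus-limit argument sidesteps that point cleanly.
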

\begin{proof}
Let $a_{i} \coloneqq\mathcal{P}_{\varphi_{i}}\left(p,\mathcal{C}\right)$ for $i=1,2$ and $x\coloneqq 1 / \left({a_{1}^{-1}+a_{2}^{-1}}\right)$.
With this notation one obtains 
\begin{align*}
 & \mathcal{P}_{\ind}\left(p-x\left(\varphi_{1}+\varphi_{2}\right),\mathcal{C}\right)\\
=\, & \mathcal{P}_{\ind}\left(p-x\varphi_{1}-x\varphi_{2},\mathcal{C}\right)\\
=\, & \mathcal{P}_{\ind}\left(p-\frac{x}{a_{1}}\cdot a_{1}\varphi_{1}-\frac{x}{a_{2}}\cdot a_{2}\varphi_{2},\mathcal{C}\right)\\
=\, & \mathcal{P}_{\ind}\left(p-\frac{x}{a_{1}}\cdot a_{1}\varphi_{1}-\left(1-\frac{x}{a_{1}}\right)\cdot a_{2}\varphi_{2},\mathcal{C}\right)\\
=\, & \mathcal{P}_{\ind}\left(\frac{x}{a_{1}}\cdot\left(p-a_{1}\varphi_{1}\right)+\left(1-\frac{x}{a_{1}}\right)\cdot\left(p-a_{2}\varphi_{2}\right),\mathcal{C}\right)\\
\overset{\left(\star\right)}{\leq}\, & \frac{x}{a_{1}}\cdot\mathcal{P}_{\ind}\left(p-a_{1}\varphi_{1},\mathcal{C}\right)+\left(1-\frac{x}{a_{1}}\right)\cdot\mathcal{P}_{\ind}\left(p-a_{2}\varphi_{2},\mathcal{C}\right)\\
=\, & 0+0,
\end{align*}
 with $\left(\star\right)$ following from the convexity of the induced pressure (\cite[Proposition 2.1]{Jaerisch:2014}). Thus Lemma~\ref{lem:JKL_Corollary-2.2} implies
\[
x\geq\mathcal{P}_{\varphi_{1}+\varphi_{2}}\left(p,\mathcal{C}\right)
\]
 and consequently 
\[
\frac{1}{\mathcal{P}_{\varphi_{1}}\left(p,\mathcal{C}\right)}+\frac{1}{\mathcal{P}_{\varphi_{2}}\left(p,\mathcal{C}\right)}\leq\frac{1}{\mathcal{P}_{\varphi_{1}+\varphi_{2}}\left(p,\mathcal{C}\right)}.
\]
\end{proof}
\begin{cor}\label{cor:AR-sublinear}
Under the conditions of Proposition~\ref{prop:AR-als-induzierter-Druck}, the reciprocal escape rate is sublinear.
\end{cor}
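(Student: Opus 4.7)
The plan is to observe that the corollary is essentially an immediate translation of Proposition~\ref{prop:Superadditivit=0000E4t-induzierter-Druck} through the identification $\mathcal{P}_{\varphi}(p,\mathcal{C}) = -\rho(A,\varphi)$ supplied by Proposition~\ref{prop:AR-als-induzierter-Druck}. Recall that sublinearity of $\varphi \mapsto 1/\rho(A,\varphi)$ on the cone of admissible ceiling functions means
\[
\frac{1}{\rho(A,\varphi_{1}+\varphi_{2})} \leq \frac{1}{\rho(A,\varphi_{1})} + \frac{1}{\rho(A,\varphi_{2})}.
\]
So the task is to convert the superadditivity statement for $1/\mathcal{P}_{\varphi}$ into the above inequality, respecting a sign flip.

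First, I would check that the hypotheses of Proposition~\ref{prop:Superadditivit=0000E4t-induzierter-Druck} are met: given that we are in the setting of Proposition~\ref{prop:AR-als-induzierter-Druck}, so the ceiling functions $\varphi_1,\varphi_2$ are positive, bounded and bounded away from zero, and the escape rates $\rho(A,\varphi_{i})$ exist and are finite and positive (the degenerate cases $\rho = 0$ or $\rho = \infty$ make sublinearity either trivial or vacuous, and can be discussed separately). Then $\mathcal{P}_{\varphi_{i}}(p,\mathcal{C}) = -\rho(A,\varphi_{i}) \in (-\infty,0)$, so the proposition applies.

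Next, apply Proposition~\ref{prop:Superadditivit=0000E4t-induzierter-Druck} to obtain
\[
\frac{1}{\mathcal{P}_{\varphi_{1}+\varphi_{2}}(p,\mathcal{C})} \geq \frac{1}{\mathcal{P}_{\varphi_{1}}(p,\mathcal{C})} + \frac{1}{\mathcal{P}_{\varphi_{2}}(p,\mathcal{C})}.
\]
Substituting $\mathcal{P}_{\varphi}(p,\mathcal{C}) = -\rho(A,\varphi)$ (which also applies to $\varphi_{1}+\varphi_{2}$, once one notes that the existence of $\rho(A,\varphi_{1}+\varphi_{2})$ is part of the assumption of Proposition~\ref{prop:AR-als-induzierter-Druck}) and multiplying by $-1$, the inequality reverses and yields exactly the sublinearity statement. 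Since there are no serious technical obstacles beyond verifying that each of the three escape rates exists and lies strictly between $0$ and $\infty$, the proof is essentially a one-line consequence; the only subtle point to state explicitly is that Proposition~\ref{prop:AR-als-induzierter-Druck} requires the existence of $\rho(A,\varphi_1+\varphi_2)$ as a hypothesis, so this must be added (or separately verified) in the corollary's assumptions.
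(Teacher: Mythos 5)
Your argument for subadditivity is exactly the paper's: transfer the superadditivity of $\varphi\mapsto 1/\mathcal{P}_{\varphi}\left(p,\mathcal{C}\right)$ from Proposition~\ref{prop:Superadditivit=0000E4t-induzierter-Druck} through the identification $\mathcal{P}_{\varphi}\left(p,\mathcal{C}\right)=-\rho\left(A,\varphi\right)$ of Proposition~\ref{prop:AR-als-induzierter-Druck}, with the sign flip reversing the inequality. Your side remarks (checking $-\infty<\mathcal{P}_{\varphi_{i}}\left(p,\mathcal{C}\right)<0$, and noting that existence of $\rho\left(A,\varphi_{1}+\varphi_{2}\right)$ must be assumed so that Proposition~\ref{prop:AR-als-induzierter-Druck} applies to the sum) are reasonable and in the spirit of the paper.

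However, there is one omission: you take ``sublinear'' to mean only subadditive. In the sense used here (as in convex analysis), sublinearity means subadditivity \emph{together with} positive homogeneity, and the paper's proof explicitly supplies the second ingredient: by Proposition~\ref{prop:EigenschaftenAR_Dach}~(\ref{enu:EigenschaftenAR_Dach_Skalar}) one has $\rho\left(A,\lambda\varphi\right)=\lambda^{-1}\rho\left(A,\varphi\right)$, hence $\rho^{-1}\left(A,\lambda\varphi\right)=\lambda\cdot\rho^{-1}\left(A,\varphi\right)$ for $\lambda>0$. This missing half is a one-line consequence of the scaling property, so the gap is easily repaired, but as written your proof establishes only subadditivity. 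Note also that the homogeneity is genuinely needed later: the proof of Theorem~\ref{thm:Explizite-erste-zwei-Ordnungen} uses that $\rho^{-1}\left(A_{\nu},\cdot\right)$ is convex, which follows from subadditivity plus positive homogeneity, not from subadditivity alone.
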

\begin{proof}
From Proposition~\ref{prop:EigenschaftenAR_Dach} (\ref{enu:EigenschaftenAR_Dach_Skalar}) it is known that the reciprocal escape rate is positive homogeneous. Propositions~\ref{prop:AR-als-induzierter-Druck} and \ref{prop:Superadditivit=0000E4t-induzierter-Druck} imply the subadditivity of the reciprocal escape rate. The combination of these two properties yields the sublinearity of the reciprocal escape rate.
\end{proof}

\section{Higher order asymptotics}\label{chap:Das-Umkehrproblem}

The local escape rate can be regarded as a first order asymptotic for the escape rate:
\[
\rho\left(B_{r}\left(x\right),\varphi\right) = \rho\left(x,\varphi\right) \cdot \mu\left(B_{r}\left(x\right)\right) + \oo{\mu\left(B_{r}\left(x\right)\right)}.
\]
Adapting a technique proposed by Cristadoro, Knight and Degli Esposti \cite{Cristadoro:2013} to the setting of special flows with locally constant ceiling functions over a Markov shift, one can compute higher order asymptotics. For periodic points, the second order term can be interpreted as the orbit length with respect to the flow.

\subsection{Calculating the escape rate}\label{sec:Cristadoro_Verfahren}

Let the base transformation be a Markov shift over a finite alphabet, let the ceiling function be $1$-arithmetic of the form 
\[ \varphi=\sum_{C\in C_{n}}k_{C}\cdot\ind_{C}\in Z_{n} \textrm{ with } k_C\in \mathbb{N}\]
and consider a hole $\bbar A\coloneqq A\times\left[0,1\right)$ with $A \in C_m$. Note that the vertical shape of the hole can be chosen freely because only its shadow is relevant for the escape rate.

The advantage of an arithmetic ceiling function and a hole of that particular shape is that the investigation of the escape rate can be simplified by looking at the discrete system $(\bbar X , \bbar \mu , \Phi_1)$ and its associated \emph{transfer operator} ${\bbar\L: L_{\bbar\mu}^{1}\left(\bbar{X}\right)\rightarrow L_{\bbar\mu}^{1}\left(\bbar{X}\right)}$  
 that is defined implicitly via $\int_{\bbar B}\bbar\L f\dx{\bbar\mu}=\int_{\Phi_{1}^{-1} \bbar B}f\dx{\bbar\mu}$ for all measurable sets $\bbar B\subset\bbar X$ and integrable functions $f\in L_{\bbar\mu}^{1}\left(\bbar{X}\right)$. The transfer operator of the open system is denoted by $\bbar{\L}_{\mathrm{op}}$ and defined as 
\[ \forall f\in L_{\bbar\mu}^{1}:\quad\bbar{\L}_{\mathrm{op}}f\coloneqq\bbar\L\left(\left(\ind-\ind_{\bbar A}\right)\cdot f\right). \]
Defining $\chi_n$ as the indicator function of $\left\{ (x,s)\in \bbar X\middle|\; \forall 0\leq k<n:\Phi_1^{k}\left(x,s\right)\notin \bbar A\right\} $, the measure of this set is given by the integral of $\chi_n$ and this can be phrased using the transfer operator of the open system: 
\[ \int\chi_{n}\dx{\bbar \mu} = \int\bbar{\L}_{\mathrm{op}}^{n}\ind\dx{\bbar \mu}. \]
The escape rate with respect to the hole $A$ is then given by 
\[ \rho\left(A,\varphi\right) = - \lim\limits_{n\rightarrow\infty} \frac{1}{n} \cdot \log \int\bbar{\L}_{\mathrm{op}}^{n}\ind\dx{\bbar \mu} \]
and thus intimately related to the spectral radius of $\bbar{\L}_{\mathrm{op}}$.

In this special setting, one can treat $\Phi_1$ as a Markov shift for the purpose of calculating the escape rate. The new alphabet is given by
\begin{equation}
\left\{ {C\times\left[k,k+1\right)} \, \middle|\; \, C\in C_{m},k\in\mathbb{N},0\leq k < k_{C}\right\},
\label{eq:New-Markov-Alphabet}
\end{equation}  
corresponding to the blocks of height $1$ that one obtains when using $n$-letter words to address blocks in the base. The new transition matrix is easily derived from that of the original Markov shift and provides a matrix representation of $\bbar \L$ with respect to the vector space spanned by the indicator functions of the sets listed in \eqref{eq:New-Markov-Alphabet}. If $\bbar A$ is one of these sets or a union thereof, one obtains a representation of $\bbar{\L}_{\mathrm{op}}$ from that of $\bbar \L$ by simply setting the entries in those rows that correspond to $\bar A$ to zero. Note that this method requires $m \leq n$. If $m > n$, one could move to a larger refined alphabet, but this would entail an exponential growth of the size of the matrix representation.

A better way of representing $\bbar{\L}_{\mathrm{op}}$ for small holes was devised by Cristadoro, Knight and Degli Esposti \cite{Cristadoro:2013}. The remainder of this subsection summarises the relevant results when adapted to the arithmetic special flow situation; a more detailed account can be found in \cite[Kapitel 7.1.1]{Dreher:2015}. Using their scheme, one only has to include enough information to represent the original closed system and $\bbar{\L}_{\mathrm{op}}^{k} \ind_{\bbar A}$ for ${k \geq 1}$. The latter does not add infinitely many new entries, because eventually these functions can be represented in terms of the original partition of the closed system. In fact $\bbar{\L}_{\mathrm{op}}^{k_0}$ can be written as a linear combination of the indicator functions of the sets in equation~\eqref{eq:New-Markov-Alphabet} if one chooses
\[
k_{0} \coloneqq \sum_{i=1}^{m-n}k_{\left[a_{i},\dots,a_{i+n-1}\right]} = S_{m-n}\varphi\left(x\right),
\]
the latter equality holding for $m>n$ and any $x \in A$ due to $\varphi$ being constant on $n$-cylinders.

In order to obtain a matrix representation of the action of $\bbar{\L}_{\mathrm{op}}$ on a vector space that contains $\ind$, it is natural to use the space spanned by
\begin{equation}
\left\{ \ind_{\bbar C}\middle|\; \bbar C\in\bbar C_{n}\right\} \cup\left\{\bbar{\L}^{k}\ind_{\bbar A}\middle|\; k=1,\dots,k_{0}-1\right\}. \label{eq:CristadoroBasiselementeEinfach}
\end{equation}
An additional condition on the hole $A = \left[a_1,\dots, a_m \right]$ is needed to ensure that the functions in \eqref{eq:CristadoroBasiselementeEinfach} are linearly independent: It is necessary that $\left(a_{1},\dots,a_{m}\right)\in\Sigma_{P}^{m}$ be reduced. This means that there is
$a_{m}'\neq a_{m}$ such that $\left(a_{1},\dots,a_{m-1},a_{m}'\right)\in\Sigma_{P}^{m}$. Using {\cite[Lemma 2 and 3]{Lind:1989}} one can then conclude that the elements of \eqref{eq:CristadoroBasiselementeEinfach} are linearly independent.

These observations are summarised in the following lemma.
\begin{lem}\label{lem:CristadoroBasis}
	Let the hole $A$ be determined by a reduced word $\left(a_{1},\dots,a_{m}\right)\in\Sigma_{P}^{m}$ with $m\geq n$ and let $k_{0} = S_{m-n}\varphi\left(x\right)$ for any $x\in A$.  
	Then 
	\begin{equation}
	\left\{ \frac{1}{\bbar{\mu}\left(\bbar C\right)}\cdot\ind_{\bbar C}\middle|\; \bbar C\in\bbar C_{n}\right\} \cup\left\{ \frac{1}{\bbar{\mu}\left(\bbar{\L}^{k}\ind_{\bbar A}\right)}\bbar{\L}^{k}\ind_{\bbar A}\middle|\; k=1,\dots,k_{0}-1\right\} \label{eq:CristadoroBasiselemente}
	\end{equation}
	 is the basis of a subspace $U$ of $L_{\bbar{\mu}}^{1}\left(\bbar X\right)$ that is mapped into itself by $\bbar{\L}_{\mathrm{op}}$ and that contains $\ind$.
\end{lem}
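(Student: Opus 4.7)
The plan is to verify three properties: that $\ind\in U$, that the functions in~\eqref{eq:CristadoroBasiselemente} are linearly independent, and that $\bbar{\L}_{\mathrm{op}}(U)\subset U$. The first is immediate from the decomposition $\ind=\sum_{\bbar C\in\bbar C_{n}}\ind_{\bbar C}$, so the real content lies in the other two items.

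For the invariance $\bbar{\L}_{\mathrm{op}}(U)\subset U$, I would first use that, under the refined alphabet~\eqref{eq:New-Markov-Alphabet}, the discrete system $\left(\bbar X,\bbar\mu,\Phi_{1}\right)$ is itself a Markov shift, so $\bbar\L\ind_{\bbar C}$ is a non-negative linear combination of indicators $\ind_{\bbar C'}$ with $\bbar C'\in\bbar C_{n}$. Combining this with $\bbar{\L}_{\mathrm{op}}f=\bbar\L\bigl((\ind-\ind_{\bbar A})f\bigr)$ gives
$$\bbar{\L}_{\mathrm{op}}\ind_{\bbar C}=\bbar\L\ind_{\bbar C}-\bbar\L\ind_{\bbar A\cap\bbar C}.$$
Since $\bbar A=A\times[0,1)$ is contained in the single block $[a_{1},\dots,a_{n}]\times[0,1)\in\bbar C_{n}$, the intersection $\bbar A\cap\bbar C$ is either empty or equal to $\bbar A$, so the correction is either $0$ or $\bbar\L\ind_{\bbar A}$. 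The latter is the first iterate basis element (or, when $k_{0}\le 1$, already a block indicator), and in either case $\bbar{\L}_{\mathrm{op}}\ind_{\bbar C}\in U$.

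For an iterate basis element $\bbar\L^{k}\ind_{\bbar A}$ with $1\le k\le k_{0}-1$, I would track its support by induction on $k$: from the explicit formula for $\Phi_{1}$ and the arithmeticity of $\varphi$, the support of $\bbar\L^{k}\ind_{\bbar A}$ is contained in $\theta^{j(k)}[a_{1},\dots,a_{n}]\times[h(k),h(k)+1)$ for integers $j(k)\in\{0,\dots,m-n-1\}$ and $h(k)\in\mathbb{N}_{0}$ governed by the $\varphi$-values along $a_{1},\dots,a_{j(k)}$. Intersecting its support with $\bbar A$ gives either $\emptyset$ or $\bbar A$ (the latter precisely when $(a_{1},\dots,a_{m})$ admits a matching period), so
$$\bbar{\L}_{\mathrm{op}}\left(\bbar\L^{k}\ind_{\bbar A}\right)=\bbar\L^{k+1}\ind_{\bbar A}-\bbar\L\ind_{\bbar A\cap\mathrm{supp}(\bbar\L^{k}\ind_{\bbar A})},$$
a combination of $\bbar\L^{k+1}\ind_{\bbar A}$ and $\bbar\L\ind_{\bbar A}$, each in $U$. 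For $k+1\le k_{0}-1$ the first term is the next iterate basis element; for $k+1=k_{0}$, the crucial computation is that $\Phi_{k_{0}}$ carries $\bbar A$ exactly onto the block $[a_{m-n+1},\dots,a_{m}]\times[0,1)\in\bbar C_{n}$, so $\bbar\L^{k_{0}}\ind_{\bbar A}$ is a scalar multiple of the corresponding block indicator.

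For linear independence, I would observe that for $1\le k\le k_{0}-1$ the base projection $\theta^{j(k)}A=[a_{j(k)+1},\dots,a_{m}]$ is a cylinder of length $m-j(k)>n$, hence a proper subset of an $n$-cylinder; therefore $\bbar\L^{k}\ind_{\bbar A}$ is not in the span of the block indicators $\{\ind_{\bbar C}\}$. The reducedness of $(a_{1},\dots,a_{m})$, invoked through \cite[Lemmas 2 and 3]{Lind:1989}, rules out the remaining linear relations among the iterates themselves. The main obstacle I expect is the periodic case in the invariance calculation: when $(a_{1},\dots,a_{m})$ admits a nontrivial period $j\le m-n-1$, one has to verify that the resulting correction $-\bbar\L\ind_{\bbar A}$ is precisely the first iterate basis element (or, for $k_{0}\le 1$, a block indicator) and hence stays inside $U$.
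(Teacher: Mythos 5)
Your proposal is correct and follows essentially the same route as the paper: track the supports $\Phi_{1}^{k}\bbar A$ of the iterates to see that $\bbar{\L}_{\mathrm{op}}$ only ever produces the next iterate plus a multiple of $\bbar\L\ind_{\bbar A}$ (the $c_{k}$ entries of $\bbar M_{\mathrm{op}}$), observe that $\bbar\L^{k_{0}}\ind_{\bbar A}$ collapses onto the block indicator of $\left[a_{m-n+1},\dots,a_{m}\right]\times\left[0,1\right)$, and obtain linear independence from reducedness via Lind's Lemmas 2 and 3 --- exactly the observations the paper summarises into this lemma. The only cosmetic slips are that the correction term is a \emph{constant multiple} of $\bbar\L\ind_{\bbar A}$ (since $\ind_{\bbar A}\cdot\bbar\L^{k}\ind_{\bbar A}=c_{k}\cdot\bbar{\mu}(\Phi_1^k\bbar A)^{-1}\bbar\mu(\bbar A)\cdot\ind_{\bbar A}$ up to normalisation, not literally an indicator) and that the support should read $\theta^{j(k)}A\times\left[h(k),h(k)+1\right)$ rather than $\theta^{j(k)}\left[a_{1},\dots,a_{n}\right]\times\left[h(k),h(k)+1\right)$; neither affects the argument.
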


A necessary condition for $k_{0}>1$ is given by $m>n$, a sufficient one by $m>n+1$. If $m=n+1$, then it depends on the shape of the ceiling function. The scaling of the basis elements in \eqref{eq:CristadoroBasiselemente} is chosen such that the matrix representing $\bbar{\L}$ on $U$ has entries that can easily be derived from the transition matrix of the original system.  
Let $\bbar M$ be the matrix representing $\bbar{\L}$ on $\bbar Z_{n}$ with respect to the basis 
\[
\left\{ \frac{1}{\bbar{\mu}\left(\bbar C\right)}\cdot\ind_{\bbar C}\middle|\; \bbar C\in\bbar C_{n}\right\}. 
\]
The action of $\bbar{\L}_{\mathrm{op}}$ on $U$ with respect to the basis in Lemma~\ref{lem:CristadoroBasis} is given by the matrix 
\[\small{
\begin{blockarray}{c ccc cccc c}
		&	\BAmulticolumn{3}{c}{\bbar{C}_n} &	\bbar{\L}\ind_{\bbar{A}} & \BAmulticolumn{1}{r}{\cdots}& \BAmulticolumn{2}{r}{\bbar{\L}^{k_{0}-1}\ind_{\bbar{A}}} \\
	\begin{block}{c(ccc|cccc)c}
		\multirow{7}{*}{$\bbar{C}_n$} &	\BAmulticolumn{3}{c|}{\multirow{7}{*}{\Large$\bbar{M}$}} &	0 & \BAmulticolumn{3}{c}{\multirow{7}{*}{\Large$0$}} & \\
		&	&&&	\vdots	&&& &\\
		&	&&&	0	&&& &\\
		&	&&&	-\alpha	&&& & \substack{ \\ \leftarrow \left[a_1,\dots,a_n\right] \times \left[0,1 \right)}\\
		&	&&&	0	&&& &\\
		&	&&&	\vdots	&&& &\\
		&	&&&	0	&&& &\\
		\cline{2-8}
		\bbar{\L}\ind_{\bbar{A}}&	\BAmulticolumn{3}{c|}{\multirow{3}{*}{\Large$0$}} &	-c_1 & 1 && &\\
		\multirow{2}{*}{\vdots}&	&&&	\multirow{2}{*}{\vdots} &	&	\ddots & &\\
		&	&&&	&&& 1 &\\
		\bbar{\L}^{k_{0}-1}\ind_{\bbar{A}}&	0\,\cdots\,0&1&0\,\cdots\,0&	-c_{k_0 -1}&  0 & \cdots & 0 &\\
	\end{block}
		& \BAmulticolumn{3}{c}{\substack{\uparrow \\ \left[a_{\left( m-n+1\right)},\dots,a_m\right] \times \left[0,1 \right)}} &&&& & \\
\end{blockarray}}\]
which shall be referred to as $\bbar M_{\mathrm{op}}$. Applying $\bbar{\L}_{\mathrm{op}}$ corresponds to multiplying with $\bbar M_{\mathrm{op}}$ from the right. The values of the variables used in the matrix are as follows:
\begin{align*}
\alpha	& = \frac{\mu\left(\left[a_{1},\dots,a_{m}\right]\right)}{\mu\left(\left[a_{1},\dots,a_{n}\right]\right)}=p_{a_{n},a_{n+1}}\cdots p_{a_{m-1},a_{m}} \\
c_{k}	& = \frac{\bbar{\mu}\left(\bbar A\cap\Phi_{\lambda}^{k}\bbar A\right)}{\bbar{\mu}\left(\Phi_{\lambda}^{k}\bbar A\right)} = \begin{cases}
0 & ,\Phi_{\lambda}^{k}\bbar A\cap\bbar A=\emptyset\\
\frac{\mu\left(\left[a_{1},\dots,a_{l(k)+1}\right]\right)}{\mu\left(\left[a_{l(k)+1}\right]\right)}=p_{a_{1},a_{2}}\cdots p_{a_{l(k)},a_{l(k)+1}} & ,\Phi_{\lambda}^{k}\bbar A\cap\bbar A\neq\emptyset
\end{cases}
\end{align*}
with $l(k)$ being the number that fulfils 
\[
S_{l(k)}\varphi\left(x\right)=\sum_{j=0}^{l(k)-1}\varphi\circ\theta^{j}\left(x\right)= k
\]
for any (and thus all) $x\in A$.

Letting $\mathfrak{r}$ denote the spectral radius of a matrix, the escape rate with respect to the hole $A$ is
\[
\rho\left(A,\varphi\right)=-\log \mathfrak{r}\left(\bbar M_{\mathrm{op}}\right).
\]

The spectral radius corresponds to the modulus of the leading eigenvalue of $\bbar M_{\mathrm{op}}$. Since the operator could be written as a non-negative matrix operating on the larger space $Z_m$ without altering the leading eigenvalue, one knows by the Perron-Frobenius theorem that this leading eigenvalue is non-negative.  The spectral radius can be calculated as the zero of smallest modulus of 
\[
\zeta_{\mathrm{op}}^{-1}\left(z\right)\coloneqq\det\left(\mathrm{id}-z\cdot\bbar M_{\mathrm{op}}\right).
\]

Using the notation
\begin{align*}
\zeta_{\mathrm{cl}}^{-1} \left(z\right)	&	\coloneqq\det\left(\mathrm{id}-z\cdot\bbar M\right)  \\
k_{0} & \coloneqq S_{m-n}\varphi\left(x\right)\quad\textrm{for any }x\in A\nonumber \\
\phi_{A} \left(z\right)	&	\coloneqq 1+\sum_{k=1}^{k_{0}-1}c_{k}z^{k}\nonumber \\
C_{k,l}  \left(z\right)	 & \coloneqq\left(-1\right)^{k+l}\det\left[\mathrm{id}-z\cdot\bbar M\right]_{k,l}\nonumber \\
t & \coloneqq\mathrm{index\, of\, the\, row\, belonging\, to\,}\left[a_{1},\dots,a_{n}\right]\times\left[0,\lambda\right)\nonumber \\
r & \coloneqq\mathrm{index\, of\, the\, column\, belonging\, to\,}\left[a_{\left(m-n+1\right)},\dots,a_{m}\right]\times\left[0,\lambda\right)\nonumber \\
\alpha & \coloneqq p_{a_{n},a_{n+1}}\cdots p_{a_{m-1},a_{m}}\nonumber 
\end{align*}
 with $C_{k,l}$ denoting the cofactor of the matrix $\mathrm{id}-z\cdot\bbar M$ at $\left(k,l\right)$, one obtains
\begin{align}
\label{eq:Cristadoro_Zeta_op_abstrakt} \zeta_{\mathrm{op}}^{-1}\left(z\right) =\, & \det\left(\mathrm{id}-z\cdot \bbar{M}_{\mathrm{op}}\right)  \\
=\, & \zeta_{\mathrm{cl}}^{-1} \left(z\right) \cdot \phi_A\left(z\right) + C_{t,r}\cdot \alpha \cdot z^{k_0}. \nonumber
\end{align}

Note that $\zeta_{\mathrm{cl}}^{-1}\left(z\right)$ and $C_{k,l}\left(z\right)$ are independent of the choice of the hole $A$. $\zeta_{\mathrm{cl}}^{-1}$ depends on the hole via the indices $t,r$ of $C_{t,r}$ and the function $\phi_{A}$ which Cristadoro et al. call the weighted correlation polynomial.

\subsection{Shrinking holes}
Continuing in the setting of the previous subsection, one considers a sequence of shrinking holes. The corresponding ideas in \cite{Cristadoro:2013} are adapted to accommodate the shape of the underlying special flow.

Fix a periodic point $x = (x_1, x_2 , \dots) \in X$ of prime period $p$ with $(x_1, \dots, x_p)$ being a reduced word. The shrinking metric balls centred at $x$ correspond to cylinder sets $\left[x_{1}\right]$, $\left[x_{1},x_{2}\right]$, $\dots$ of increasing length. Considering holes  
\[
A_{\nu}\coloneqq\left[x_{1},\dots,x_{\nu p}\right],
\]
one can simplify the formula for $\phi_{A_{\nu}}$ because the periodicity of $x$ provides additional information about the $c_k$. In order to simplify calculations further one can assume without loss of generality that  the ceiling function is a cylinder function of order $n$ such that $n$ is a multiple of $p$. Set 
\begin{align}
m & \coloneqq\nu p,\;\;
o  \coloneqq S_{p}\varphi\left(x\right),\;\;
k_{0} \coloneqq S_{m-n}\varphi\left(x\right)=\left(\nu-\frac{n}{p}\right)\cdot o	\nonumber \\
s & \coloneqq\min\left\{ k\in\mathbb{N}\middle|\; ko\geq k_{0}\right\} =\nu-\frac{n}{p} \label{eq:Def_m_o_k0_periodischer-Fall} \\
\mu_{\nu} & \coloneqq\mu\left(A_{\nu}\right),\;\;
\mu_{w}  \coloneqq\mu\left(\left[x_{1},\dots,x_{p}\right]\right),\;\;
\mu_{t}  \coloneqq\mu\left(\left[x_{1},\dots,x_{n}\right]\right) \nonumber
\end{align}
and assume $\nu > n/p$ as to ensure $k_0 > 1$. This condition on $\nu$ is no restriction because one is interested in shrinking holes and thus $\nu \rightarrow \infty$.

It follows that 
\begin{equation}
c_{ko}=c_{o}^{k}=\left(p_{x_{1},x_{2}}\cdots p_{x_{p},x_{p+1}}\right)^{k}\quad\textrm{for }0\leq k\cdot o<k_{0}\label{eq:c_ko}
\end{equation}
and
\begin{equation}
c_{j}=0\quad\textrm{for }j\notin o\mathbb{N}_{0},\label{eq:c_j}
\end{equation}
because $p$ is the prime period of $x$ with respect to $\theta$ and thus $o$ is the prime period of $(x,0)$ with respect to $\Phi_1$. The condition $n\geq p$ ensures that there are no non-zero terms $c_{j}$ for $j>k_{0}-o$.  Using 
\[
\mu_{\nu}=\frac{\mu_w}{c_{o}}\cdot c_{o}^{\nu},
\]
one obtains 
\begin{equation}
c_{o}^{s}=\frac{\mu_{\nu}}{\mu_{w}}\cdot c_{o}^{1-n/p}.	\label{eq:c_o_s}
\end{equation}
This reduces the expression for $\phi_{A_{\nu}}$ to 
\begin{equation}
\phi_{A_{\nu}}\left(z\right) =1+\sum_{k=1}^{s-1}c_{ko}z^{ko}
  =\frac{1-\left(c_{o}z^{o}\right)^{s}}{1-c_{o}z^{o}} 
  =\frac{1-\frac{\mu_{\nu}}{\mu_{w}}\cdot c_{o}^{1-n/p}z^{os}}{1-c_{o}z^{o}}. \label{eq:Phi_A_nu_zusammengefasst} 
\end{equation}
 Since $\bbar M$ inherits the property of being an irreducible stochastic matrix from the transition matrix of the original Markov shift, { $\zeta_{\mathrm{cl}}^{-1} (z)$ }has a simple zero in $z=1$. Therefore it is possible to write 
\begin{equation}
\zeta_{\mathrm{cl}}^{-1}\left(z\right)=\left(1-z\right)\cdot G\left(z\right)\label{eq:zeta_cl_ist_(1-z)_mal_G}
\end{equation}
with a polynomial $G\left(z\right)$. This will be useful because looking at shrinking holes means looking at $z$ for values close to $1$. The notation $f_{\nu}$ is used in order to emphasize the dependency of the function on the hole $A_\nu$.

Using equations \eqref{eq:Cristadoro_Zeta_op_abstrakt}, \eqref{eq:c_o_s}, \eqref{eq:Phi_A_nu_zusammengefasst} and \eqref{eq:zeta_cl_ist_(1-z)_mal_G} as well as the equality  $\alpha=\mu_{\nu}/\mu_{t}$, one obtains 
\begin{align}
f_{\nu}\left(z\right) & = \zeta_{\mathrm{op}}^{-1}\left(z\right) \nonumber\\
 & =\zeta_{\mathrm{cl}}^{-1}\left(z\right)\cdot\phi_{A}\left(z\right)+C_{t,r}\left(z\right)\cdot\alpha\cdot z^{k_{0}}\nonumber \\
 & =\left(1-z\right)\cdot G\left(z\right)\cdot\frac{1-\frac{\mu_{\nu}}{\mu_{w}}\cdot c_{o}^{1-n/p}z^{os}}{1-c_{o}z^{o}}+C_{t,r}\left(z\right)\cdot\frac{\mu_{\nu}}{\mu_{t}}\cdot z^{k_{0}}\nonumber \\
 & =\underset{\eqqcolon g_{1}\left(z\right)}{\underbrace{\left(1-z\right)\cdot G\left(z\right)\cdot\frac{1}{1-c_{o}z^{o}}}} \label{eq:f_nu-als-g1_g2} \\
 & \quad\,+\mu_{\nu}\cdot\underset{\eqqcolon g_{2,\nu}\left(z\right)}{\left(\underbrace{\frac{1}{\mu_{t}}\cdot C_{t,r}\left(z\right)\cdot z^{k_{0}}-\frac{c_{o}^{1-n/p}}{\mu_{w}}\cdot\left(1-z\right)\cdot G\left(z\right)\cdot\frac{z^{os}}{1-c_{o}z^{o}}}\right)}\nonumber 
\end{align}
Note that $g_{1}$ does not depend on $\nu$ and while $g_{2,\nu}$ does depend on $\nu$ via $k_{0}$ and $s$, it does not contain a $\mu_{\nu}$ term.  
Fix a $k\in\mathbb{N}$. In order to express the zero of smallest modulus of $f_\nu$ in terms of  $\mu_{\nu}$, one aims to find expressions $s_{1},\dots,s_{k}$ (possibly depending on $\nu$) that satisfy the condition $s_{j}\left(\nu\right)\cdot\mu_{\nu}\rightarrow0$ for $\nu\rightarrow\infty$ and for which $f_{\nu}\bigl(1+s_{1}\mu_{\nu}+s_{2}\mu_{\nu}^{2}+\dots+s_{k}\mu_{\nu}^{k}\bigr)=\oo{\mu_{\nu}^{k}}$, implying that $1+s_{1}\mu_{\nu}+s_{2}\mu_{\nu}^{2}+\dots+s_{k}\mu_{\nu}^{k}$ is a good approximation of the desired zero. In order to determine the $s_k$, one writes $f_{\nu}(z)$ as a Taylor polynomial at $1$ with a remainder term, then inserts $z = 1+s_{1}\mu_{\nu}+s_{2}\mu_{\nu}^{2}+\dots+s_{k}\mu_{\nu}^{k}$, writes the resulting expression as a polynomial in $\mu_\nu$ and finally chooses $s_{1},\dots,s_{k}$ in such a way that the coefficients of $\mu_{\nu}^{l}$ for $0\leq l\leq k$ turn out to be zero.

\subsection{Higher order approximations for arithmetic cylinder functions}
The idea for obtaining an approximation of the zero describing the escape rate that was laid out at the end of the previous subsection will be carried out. In contrast to \cite{Cristadoro:2013}, the argument will be carried out rigorously and ultimately yield a little-$o$ statement regarding the quality of the approximation.

$f_{\nu}$ is a rational function in $z$ which does not have a pole in $z=1$. Consequently, one can express $f_\nu$ as a Taylor polynomial with remainder term: 
\[
f_{\nu}\left(z\right)=f_{\nu}\left(1\right)+\sum_{l=1}^{k}\frac{1}{l!}f_{\nu}^{\left(l\right)}\left(1\right)\cdot\left(z-1\right)^{l}+\frac{1}{\left(k+1\right)!}f_{\nu}^{\left(k+1\right)}\left(\xi\right)\cdot\left(z-1\right)^{k+1}
\]
 with a $\xi=\xi\left(z\right)\in\left(1,z\right)$. With 
\[
z=1+s_{1}\mu_{\nu}+s_{2}\mu_{\nu}^{2}+\dots+s_{k}\mu_{\nu}^{k}
\]
 one obtains via sorting by powers of $\mu_{\nu}$ that 
\begin{align}
f_{\nu}\left(z\right)= & \phantom{+\mu_{\nu}\,\,\cdot}\underset{=0}{\underbrace{g_{1}\left(1\right)}}\label{eq:f_nu_als_PR}\\
 & +\mu_{\nu}\cdot\left(s_{1}g_{1}'\left(1\right)+g_{2,\nu}\left(1\right)\right)\nonumber \\
 & +\mu_{\nu}^{2}\cdot\left(s_{2}g_{1}'\left(1\right)+\frac{1}{2}s_{1}^{2}g_{1}''\left(1\right)+g_{2,\nu}'\left(1\right)\cdot s_{1}\right)\nonumber \\
 & +\mu_{\nu}^{3}\cdot\ldots\nonumber \\
 & +\dots\nonumber 
\end{align}
Since $g_{1}'\left(1\right)=-G\left(1\right)/\left(1-c_{o}\right)\neq0$, it is possible to recursively determine the $s_{i}$ that make the coefficients of $\mu_{\nu},\dots,\mu_{\nu}^{k}$ disappear.

\begin{fact}
\label{fact:s_i-Allgemeine-Formel}With $g_{1}$ and $g_{2,\nu}$ as in equation~\eqref{eq:f_nu-als-g1_g2} and  $j_{1},\dots,j_{i}$ in $\mathbb{N}_{0}$, one obtains the following general formula for $s_{i}$, $1\leq i\leq k$:
\begin{align*}
s_{i}=\, & -\frac{1}{g_{1}'\left(1\right)}\cdot\left(\sum_{l=2}^{k}\sum_{\substack{j_{1}+j_{2}+\dots+j_{l}=l\\
j_{1}+2\cdot j_{2}+\dots+i\cdot j_{i}=i
}
}\frac{1}{j_{1}!\cdots j_{i}!}\cdot g_{1}^{\left(l\right)}\left(1\right)\cdot s_{1}^{j_{1}}\cdots s_{i}^{j_{i}}\right)\\
 & -\frac{1}{g_{1}'\left(1\right)}\cdot\left(\sum_{l=0}^{k}\sum_{\substack{j_{1}+j_{2}+\dots+j_{l}=l\\
j_{1}+2\cdot j_{2}+\dots+i\cdot j_{i}=i-1
}
}\frac{1}{j_{1}!\cdots j_{i}!}\cdot g_{2,\nu}^{\left(l\right)}\left(1\right)\cdot s_{1}^{j_{1}}\cdots s_{i}^{j_{i}}\right)
\end{align*}
\end{fact}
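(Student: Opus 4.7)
The plan is to realise the statement as a direct bookkeeping of the Taylor/multinomial expansion already set up in \eqref{eq:f_nu_als_PR}, carried through to order $k$ instead of just to order $3$. Since $f_\nu$ is rational in $z$ with no pole at $1$ (both $g_1$ and $g_{2,\nu}$ are regular there, as $\bbar M$ is stochastic and $c_o < 1$), one can expand
\[
g_1(z)=\sum_{l\geq 1}\frac{g_1^{(l)}(1)}{l!}(z-1)^l,\qquad g_{2,\nu}(z)=\sum_{l\geq 0}\frac{g_{2,\nu}^{(l)}(1)}{l!}(z-1)^l,
\]
using $g_1(1)=0$. Then I would substitute $z-1=s_1\mu_\nu+\dots+s_k\mu_\nu^k$ and apply the multinomial formula
\[
(z-1)^l=\sum_{j_1+\dots+j_k=l}\frac{l!}{j_1!\cdots j_k!}\,s_1^{j_1}\cdots s_k^{j_k}\,\mu_\nu^{j_1+2j_2+\dots+kj_k},
\]
so that collecting the coefficient of $\mu_\nu^i$ in $f_\nu(z)$ automatically produces the two constraints $j_1+\dots+j_l=l$ and $j_1+2j_2+\dots+kj_k=i$ (respectively $i-1$ for the $g_{2,\nu}$ contribution, where one extra factor of $\mu_\nu$ is free). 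Because $\sum_m m j_m=i$ forces $j_m=0$ for all $m>i$, the summation indices may be truncated to $j_1,\dots,j_i$, matching the shape of the formula in the statement.

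Next I would isolate the linear dependence on $s_i$. In the $g_1$ part, the contribution proportional to $s_i$ comes only from those admissible tuples with $j_i\geq 1$; since $ij_i\leq\sum_m m j_m=i$ gives $j_i\in\{0,1\}$, and $j_i=1$ combined with $\sum j_m=l$ and $\sum m j_m=i$ forces $l=1$ and all other $j_m=0$, the only appearance of $s_i$ is the single term $s_i g_1'(1)$. The analogous argument in the $g_{2,\nu}$ part (where $\sum m j_m=i-1<i$) shows $j_i=0$ there, so $s_i$ does not appear at all. Consequently the coefficient of $\mu_\nu^i$ in $f_\nu$ is affine in $s_i$, with leading coefficient $g_1'(1)=-G(1)/(1-c_o)\neq 0$.

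Setting this coefficient to zero and solving for $s_i$ produces exactly the displayed formula: the term $s_ig_1'(1)$ moves to the left, one divides by $g_1'(1)$, and the remaining pieces -- those from $g_1^{(l)}(1)$ with $l\geq 2$ (under $\sum j_m=l$, $\sum m j_m=i$) and those from $g_{2,\nu}^{(l)}(1)$ with $l\geq 0$ (under $\sum j_m=l$, $\sum m j_m=i-1$) -- form the two double sums in Fact~\ref{fact:s_i-Allgemeine-Formel}. A short induction on $i$ shows that all quantities $s_1,\dots,s_{i-1}$ entering the right-hand side have already been determined, so $s_i$ is well-defined.

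The step I expect to be the main obstacle is less a conceptual point than a notational one: carefully aligning the two double sums (one coming from $g_1$ with constraint $\sum m j_m=i$, one from $g_{2,\nu}$ with $\sum m j_m=i-1$) and verifying that the upper summation limit may be safely written as $k$ rather than $i$, so that the formula has a uniform shape. This amounts to observing that any term with $l>i$ is automatically zero because the constraints on the $j_m$ cannot be satisfied, so extending the outer sum up to $k$ is harmless and matches the statement.
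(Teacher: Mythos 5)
Your proposal is correct and follows essentially the same route the paper takes (the paper states this as a Fact and only sketches the procedure in the surrounding text): Taylor-expand $f_{\nu}=g_{1}+\mu_{\nu}g_{2,\nu}$ at $1$, substitute $z-1=s_{1}\mu_{\nu}+\dots+s_{k}\mu_{\nu}^{k}$, apply the multinomial formula, and observe that the coefficient of $\mu_{\nu}^{i}$ is affine in $s_{i}$ with leading coefficient $g_{1}'(1)\neq0$. Your observations that $j_{i}\in\{0,1\}$ forces the sole $s_{i}$-contribution to be $s_{i}g_{1}'(1)$, and that terms with $l>i$ vanish so the outer sums may run to $k$, correctly fill in the bookkeeping the paper leaves implicit.
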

The terms $s_{1}$ and $s_{2}$ are of particular interest. The following relation between $C_{t,r}\left(1\right)$, $\mu_{t}$ and $G\left(1\right)$ helps simplify the resulting terms.

\begin{lem}
	It holds that $C_{t,r}\left(1\right)=\frac{1}{\mu\left(\varphi\right)}\cdot\mu_{t}\cdot G\left(1\right)$.
\end{lem}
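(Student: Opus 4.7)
The plan is to realise $C_{t,r}(1)$ as the $(r,t)$-entry of the adjugate matrix $\mathrm{adj}(\mathrm{id} - \bbar M)$ and then exploit the rank-one structure forced on that adjugate by the algebraic simplicity of the zero eigenvalue. In general the adjugate is the transpose of the cofactor matrix, $(\mathrm{adj}\,A)_{ij} = (-1)^{i+j}\det[A]_{j,i}$, which coincides with the paper's $C_{j,i}$; hence $(\mathrm{adj}(\mathrm{id} - \bbar M))_{r,t} = C_{t,r}(1)$ and it suffices to compute this single entry.

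The key structural input is a pair of eigenvectors of $\bbar M$. Because $\bbar{\L}$ is a transfer operator preserving $\bbar\mu$-integrals, it preserves the sum of coefficients in the basis $\bigl\{\bbar\mu(\bbar C)^{-1}\ind_{\bbar C}\bigr\}_{\bbar C \in \bbar C_n}$, which forces $\bbar M$ to be row-stochastic, so the column vector $\mathbf{1}$ is a right eigenvector with eigenvalue $1$. Simultaneously $\bbar{\L}\ind = \ind$, and expanding $\ind = \sum_{\bbar C}\bbar\mu(\bbar C)\cdot\bbar\mu(\bbar C)^{-1}\ind_{\bbar C}$ shows that the row vector $u^T \coloneqq (\bbar\mu(\bbar C))_{\bbar C \in \bbar C_n}$ is a left eigenvector of $\bbar M$ for the same eigenvalue. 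Irreducibility of the original transition matrix $P$ lifts to irreducibility of $\bbar M$ (any two elements of $\bbar C_n$ are mutually accessible under $\Phi_1$), and Perron--Frobenius then makes $1$ an algebraically simple eigenvalue of $\bbar M$.

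I would then invoke the standard fact that for any matrix $A$ with algebraically simple eigenvalue $0$, right eigenvector $v$, and left eigenvector $u^T$ with $u^T v \neq 0$,
\[
\mathrm{adj}(A) = \frac{\prod_{\lambda \neq 0}\lambda}{u^T v}\,v\,u^T,
\]
the product ranging over the nonzero eigenvalues of $A$ counted with multiplicity. Indeed, $A\cdot\mathrm{adj}\,A = 0$ forces every column of $\mathrm{adj}\,A$ into $\ker A = \mathbb{R}v$ and every row into the left kernel $\mathbb{R}u^T$, whence $\mathrm{adj}\,A = c\,v\,u^T$, and the scalar $c$ is pinned down by the trace identity $\mathrm{tr}(\mathrm{adj}\,A) = \frac{d}{d\epsilon}\big|_{\epsilon=0}\det(A+\epsilon I) = \prod_{\lambda \neq 0}\lambda$ (Jacobi's formula combined with the eigenvalue factorisation of $\det(A+\epsilon I)$). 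Applied to $A = \mathrm{id} - \bbar M$, the nonzero eigenvalues are $\{1-\lambda : \lambda \in \mathrm{spec}(\bbar M)\setminus\{1\}\}$, with product equal to $G(1)$ by the definition $\zeta_{\mathrm{cl}}^{-1}(z) = (1-z)G(z)$, and the normaliser is $u^T\mathbf{1} = \sum_{\bbar C \in \bbar C_n}\bbar\mu(\bbar C) = \bbar\mu(\bbar X) = \int_X \varphi\,d\mu = \mu(\varphi)$. Reading off the $(r,t)$-entry with $v_r = 1$ and $u_t = \bbar\mu(\bbar C_t) = \mu_t$ yields $C_{t,r}(1) = \frac{G(1)\,\mu_t}{\mu(\varphi)}$, as claimed.

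The one step that truly requires care is the algebraic (not merely geometric) simplicity of the leading eigenvalue of $\bbar M$: geometric simplicity alone does not license the compact adjugate formula above, so one needs Perron--Frobenius for the nonnegative irreducible matrix $\bbar M$; with that in hand the rest is bookkeeping.
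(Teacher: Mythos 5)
Your proof is correct, and it takes a genuinely different (more packaged) route than the paper's. The paper works by hand with cofactors: it first shows that all cofactors in the $t$-th row agree, $C_{t,r}(1)=C_{t,t}(1)$, by exploiting that $C_{t,r}$ does not depend on the $t$-th row of $\bbar M$ and replacing that row with a unit stochastic row; it then verifies by column expansion that $\left(C_{t,t}(1)\right)_{t}$ is a left eigenvector of $\bbar M$ for the eigenvalue $1$, hence proportional to the invariant vector $\vect v=\left(\mu_t/\mu(\varphi)\right)_t$; finally it identifies the proportionality constant as $G(1)=\sum_t C_{t,t}(1)$ by differentiating $\det\left(\mathrm{id}-z\bbar M\right)=(1-z)G(z)$ at $z=1$ via Jacobi's formula. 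You compress all three steps into the single rank-one adjugate formula $\mathrm{adj}(A)=\frac{\prod_{\lambda\neq0}\lambda}{u^{T}v}\,v\,u^{T}$ for a matrix $A$ with algebraically simple zero eigenvalue, applied to $A=\mathrm{id}-\bbar M$; your identifications are all correct: $(\mathrm{adj}\,A)_{r,t}=C_{t,r}(1)$, the right eigenvector is $\mathbf{1}$ by row-stochasticity, the left eigenvector is $\left(\bbar\mu(\bbar C)\right)_{\bbar C}$ with $u_t=\bbar\mu\left(\left[a_1,\dots,a_n\right]\times\left[0,1\right)\right)=\mu_t$, the product of nonzero eigenvalues of $\mathrm{id}-\bbar M$ is $G(1)$ because $1$ is a simple root of $\zeta_{\mathrm{cl}}^{-1}$, and $u^{T}v=\bbar\mu\left(\bbar X\right)=\mu(\varphi)$. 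Both arguments rest on the same inputs (irreducibility and row-stochasticity of $\bbar M$, Perron--Frobenius, Jacobi's formula), but yours is shorter and makes explicit the need for algebraic -- not merely geometric -- simplicity of the Perron eigenvalue, which the paper invokes only implicitly when asserting that $\zeta_{\mathrm{cl}}^{-1}$ has a simple zero at $z=1$.
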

The following proof differs from the argument suggested in \cite{Cristadoro:2013} in that it only requires comparatively simple matrix calculations.

\begin{proof}
	Let $N$ be the number of rows (and thus also columns) in $\bbar M$. Note that $\bbar M$ is an irreducible row-stochastic matrix.  Therefore it has a unique left eigenvector $\vect v = \left(v_1, \dots, v_N\right)\in\mathbb{R}_{>0}^{N}$ with $\sum_{i=0}^{N} v_i=1$. This eigenvector corresponds to the probability measure $\bbar{\mu}/\mu\left(\varphi\right)$ on $n$-cylinders. It is therefore sufficient to show that $C_{t,r}\left(1\right)=v_{t}\cdot G\left(1\right)$.
	
	First it is shown that for all $r=1,\dots,N$ 
	\[
	C_{t,r}\left(1\right)=C_{t,t}\left(1\right).
	\]
	To this end, one expands the determinant along the $t$-th row and plugs in $z=1$
	\begin{equation}
	0 = \det\left(\mathrm{id}-1\cdot \bbar{M} \right)=C_{t,t}\left(1\right)-\sum_{r=1}^{N}1\cdot m_{t,r}C_{t,r}\left(1\right). \label{eq:C_mu_G_Det0_Zeile}
	\end{equation}
	This evaluates to zero because $1$ is an eigenvalue of $\bbar{M}$. Since $\bbar{M}$ is row-stochastic, this can also be written as
	\begin{equation}
	0=\sum_{r=1}^{N}m_{t,r}\left(C_{t,t}\left(1\right)-C_{t,r}\left(1\right)\right).\label{eq:C_mu_G_Vektor}
	\end{equation}
	The validity of this equation relies only on the fact that $\bbar{M}$ is row-stochastic, it does not depend on the specific entries. Also, $C_{t,r}\left(z\right)$ does not depend on the entries in the $t$-th row of $\bbar{M}$. Therefore, one can replace the $t$-th row of $\bbar{M}$ by any other non-negative row $\bigl(m_{t,1}',\dots,m_{t,N}'\bigr)$ with row sum $1$. In particular one can choose a row that consists only of zeroes except for a single $s$ for which  $m_{t,r}'=1$. Modifying $\bbar{M}$ in such a way yields
	\begin{equation}
	C_{t,r}\left(1\right)=C_{t,t}\left(1\right)\label{eq:C_mu_G_KofaktorGleichheit_bei_1}
	\end{equation}
	for all $r=1,\dots,N$, completing the first step of the proof.
	
	This, together with the analogue of equation~\eqref{eq:C_mu_G_Det0_Zeile} for expanding along the $r$-th column instead of the $t$-th row leads to 
	\[
	\sum_{t=1}^{N}C_{t,t}\left(1\right)\cdot m_{t,r}=\sum_{t=1}^{N}C_{t,r}\left(1\right)\cdot m_{t,r}=C_{r,r}\left(1\right)
	\]
	for $s=1,\dots,N$ which implies that $\left(C_{t,t}\left(1\right)\right)_{t=1,\dots,N}$ is a left eigenvector for $\bbar{M}$ with respect to the eigenvalue $1$, and thus a scalar multiple of $\vect v$. Using the Jacobian formula for the derivative of the determinant (see for example \cite[Kapitel 8.3, Theorem 1]{Magnus:1991}), one obtains
	\[
	\frac{\mathrm{d}}{\mathrm{d}z}\left(\det\left(\mathrm{id}-z\cdot M\right)\right)=\sum_{t=1}^{N}\sum_{r=1}^{N}-m_{t,r}\cdot C_{t,r}\left(z\right).
	\] 
	From equation~\eqref{eq:zeta_cl_ist_(1-z)_mal_G} one obtains
	\[
	\frac{\mathrm{d}}{\mathrm{d}z}\left(\det\left(\mathrm{id}-z\cdot M\right)\right)=\frac{\mathrm{d}}{\mathrm{d}z}\left(\left(1-z\right)\cdot G\left(z\right)\right)=-G\left(z\right)+\left(1-z\right)\cdot G'\left(z\right).
	\]
	For $z=1$, one can combine this with equation~\eqref{eq:C_mu_G_KofaktorGleichheit_bei_1} and obtain
	\[
	G\left(1\right)=\sum_{t=1}^{N}\sum_{r=1}^{N}m_{t,r}\cdot C_{t,r}\left(1\right)=\sum_{t=1}^{N}C_{t,t}\left(1\right).
	\]
	This leads to
	\[
	G\left(1\right)\cdot\vect v=\left(C_{t,t}\left(1\right)\right)_{t=1,\dots,N} = \left(C_{t,r}\left(1\right)\right)_{t=1,\dots,N},
	\]
	implying the desired result.
\end{proof}

Using this lemma, one obtains the following explicit formulae for $s_{1}$ and $s_{2}$: 
\begin{align}
s_{1} & =-\frac{g_{2,\nu}\left(1\right)}{g_{1}'\left(1\right)}
   =\frac{1-c_{o}}{\mu\left(\varphi\right)} \nonumber  \\
s_{2} & =-\frac{\frac{1}{2}s_{1}^{2}g_{1}''\left(1\right)+g_{2,\nu}'\left(1\right)\cdot s_{1}}{g_{1}'\left(1\right)} \label{eq:s1_s2} \\
 & =\left(\frac{1-c_{o}}{\mu\left(\varphi\right)}\right)^{2}\cdot\left(k_{0}-\frac{G'\left(1\right)}{G\left(1\right)}-\frac{c_{o}o}{1-c_{o}}+\frac{C'_{t,r}\left(1\right)}{C_{t,r}\left(1\right)}+\frac{\mu\left(\varphi\right)\cdot c_{o}^{1-n/p}}{\mu_{w}\cdot\left(1-c_{o}\right)}\right)\nonumber 
\end{align}

Several lemmata will now lead up to a proof of Theorem~\ref{thm:Cristadoro_Nullstelle_kleinO}.

\begin{lem}
\label{lem:Abschaetzung_kleinste_NS_nach_oben}Assume that the local escape rate in $x$ exists and is finite. Let $z_{\nu}\in\mathbb{R}_{\geq0}$ be the zero of smallest modulus of $f_{\nu}$. Then there are  $\gamma\in\mathbb{R}$ and $\nu_{0}\in\mathbb{N}$ such that for all $\nu\geq\nu_{0}$ it holds true that
\[
1\leq z_{\nu}<1+\gamma\mu_{\nu}.
\]
\end{lem}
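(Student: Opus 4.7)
The plan is to identify $z_{\nu}$ explicitly with the exponential of the escape rate and then convert the hypothesis on the local escape rate into the desired linear upper bound.

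First I would observe that the zeros of $\zeta_{\mathrm{op}}^{-1}\left(z\right)=\det\bigl(\mathrm{id}-z\cdot\bbar M_{\mathrm{op}}\bigr)$ are precisely the reciprocals of the nonzero eigenvalues of $\bbar M_{\mathrm{op}}$. Since $\bbar M_{\mathrm{op}}$ is a non-negative matrix, the Perron--Frobenius theorem guarantees a non-negative real leading eigenvalue whose modulus equals the spectral radius $\mathfrak{r}\bigl(\bbar M_{\mathrm{op}}\bigr)$. Combined with the identity $\rho\left(A_{\nu},\varphi\right)=-\log\mathfrak{r}\bigl(\bbar M_{\mathrm{op}}\bigr)$ from the previous subsection, this pins down the smallest non-negative real zero as
\[
z_{\nu}=\frac{1}{\mathfrak{r}\bigl(\bbar M_{\mathrm{op}}\bigr)}=e^{\rho\left(A_{\nu},\varphi\right)}.
\]
The lower bound $z_{\nu}\geq1$ is then immediate from the fact that escape rates are non-negative (Definition~\ref{def:AR_kontinuierlich} together with Proposition~\ref{prop:EigenschaftenAR_Mengen}~\eqref{enu:EigenschaftenAR_Mengen_Monotonie}, since $\mu\leq1$ forces the probabilities in the definition to be at most $1$).

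For the upper bound I would exploit the assumption that $\rho\left(x,\varphi\right)$ exists and is finite. Because in the shift space equipped with the metric $d\left(y,y'\right)=e^{-\beta\left|y\wedge y'\right|}$ the cylinders $A_{\nu}=\left[x_{1},\ldots,x_{\nu p}\right]$ are precisely closed balls around $x$ with radii $e^{-\beta\nu p}\to0$, the sequence
\[
\frac{\rho\left(A_{\nu},\varphi\right)}{\mu\left(A_{\nu}\right)}=\frac{\rho\left(A_{\nu},\varphi\right)}{\mu_{\nu}}
\]
is a subsequence of the net defining $\rho\left(x,\varphi\right)$ and therefore converges to $\rho\left(x,\varphi\right)<\infty$. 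Fixing any $\varepsilon>0$, this yields an $\nu_{1}\in\mathbb{N}$ such that $\rho\left(A_{\nu},\varphi\right)\leq\bigl(\rho\left(x,\varphi\right)+\varepsilon\bigr)\mu_{\nu}$ for all $\nu\geq\nu_{1}$. Since $\mu_{\nu}\to0$, one also has $\rho\left(A_{\nu},\varphi\right)\to0$, so for some $\nu_{0}\geq\nu_{1}$ the exponent stays in a range where the elementary inequality $e^{y}\leq1+2y$ is valid. Combining these gives
\[
z_{\nu}=e^{\rho\left(A_{\nu},\varphi\right)}\leq1+2\rho\left(A_{\nu},\varphi\right)\leq1+\underbrace{2\bigl(\rho\left(x,\varphi\right)+\varepsilon\bigr)}_{\eqqcolon\gamma}\cdot\mu_{\nu}
\]
for all $\nu\geq\nu_{0}$, which is the claimed estimate.

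The only point requiring any care is the identification $z_{\nu}=e^{\rho\left(A_{\nu},\varphi\right)}$: one must check that the leading eigenvalue of $\bbar M_{\mathrm{op}}$ is genuinely attained at a real non-negative number and coincides with the spectral radius, so that $z_{\nu}$ as defined in the lemma is unambiguous. This is where the Perron--Frobenius argument is invoked, using that $\bbar M_{\mathrm{op}}$ arises from a non-negative matrix on the extended space $\bbar Z_{m}$ (as explained in Section~\ref{sec:Cristadoro_Verfahren}) by zeroing out the rows corresponding to $\bbar A$. Everything else reduces to elementary exponential estimates.
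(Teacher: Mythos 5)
Your proposal is correct and follows essentially the same route as the paper: identify $z_{\nu}=e^{\rho\left(A_{\nu},\varphi\right)}$ via the spectral radius of $\bbar M_{\mathrm{op}}$, use non-negativity of the escape rate for the lower bound, and use the first-order asymptotic $\rho\left(A_{\nu},\varphi\right)=\rho\left(x,\varphi\right)\mu_{\nu}+\oo{\mu_{\nu}}$ together with an elementary expansion of the exponential for the upper bound. The paper's proof is just a terser version of exactly this argument.
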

\begin{proof}
Using $z_{\nu}=e^{\rho\left(A_{\nu},\varphi\right)}$ and $0 \leq \rho\left(A_{\nu},\varphi\right) = \rho\left(x,\varphi\right) \cdot \mu_\nu + \oo{\mu_\nu}$, one obtains
\[
z_{\nu} = 1 + \rho\left(x,\varphi\right) \cdot \mu_\nu + \oo{\mu_\nu},
\]
which implies the existence of a suitable $\nu_0$ for any $\gamma > \rho\left(x,\varphi\right)$.
\end{proof}

The following uses the notation $g^{(l)}$ to denote the $l$-th derivative.
\begin{lem}
\label{lem:s-k_mal_mu_gegen_Null}Let $\epsilon\in\left(0,1\right)$
and $\gamma\geq0$. Then  
\begin{eqnarray*}
\lim_{\nu\rightarrow\infty}\sup_{1\leq z\leq1+\gamma\mu_{\nu}}\left|g_{1}^{\left(l\right)}\left(z\right)\right|\cdot\epsilon^{\nu} & = & 0,\\
\lim_{\nu\rightarrow\infty}\sup_{1\leq z\leq1+\gamma\mu_{\nu}}\left|g_{2,\nu}^{\left(l\right)}\left(z\right)\right|\cdot\epsilon^{\nu} & = & 0
\end{eqnarray*}
 for all $l\in\mathbb{N}_{0}$. For $1\leq j\leq k$ this implies
\[
\lim_{\nu\rightarrow\infty}s_{j}\left(\nu\right)\cdot\epsilon^{\nu}=0,
\]
and in particular
\[
\lim_{\nu\rightarrow\infty}s_{j}\left(\nu\right)\cdot\mu_{\nu}=0.
\]
\end{lem}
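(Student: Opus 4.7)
The plan is to establish the three claims in order, each feeding into the next. Throughout, I use that $c_o \in (0,1)$ (strict for a non-degenerate periodic orbit of an irreducible Markov chain), that $\mu_\nu = (\mu_w/c_o)\, c_o^\nu$ decays exponentially in $\nu$, and that $k_0 = os = (\nu - n/p)\,o$ grows only linearly in $\nu$.

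\textbf{First,} for $g_1$: since $g_1(z) = (1-z)G(z)/(1-c_o z^o)$ is $\nu$-independent and its nearest singularity sits at $z_0 = c_o^{-1/o} > 1$, each derivative $g_1^{(l)}$ is continuous on $[1, z_0)$ and hence bounded on any compact subinterval $[1, z^\ast]$ with $z^\ast < z_0$. For $\nu$ large enough $1 + \gamma\mu_\nu \leq z^\ast$, so $\sup_{z}|g_1^{(l)}(z)| = \OO{1}$ and multiplication by $\epsilon^\nu$ kills it.

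\textbf{Second,} for $g_{2,\nu}$: the only $\nu$-dependent pieces are the factors $z^{k_0}$ and $z^{os}$. The critical quantitative input is $\mu_\nu k_0 \to 0$, from which $(1+\gamma\mu_\nu)^{k_0} \to 1$ on our interval, so $z^{k_0}$ and $z^{os}$ stay uniformly bounded. Differentiating $l$ times drops a falling factorial of length $l$ from the exponent, contributing a factor of order $k_0^l = \OO{\nu^l}$. Applying the Leibniz rule with the bounded, $\nu$-independent factors $C_{t,r}(z)$, $G(z)$, and $(1-c_o z^o)^{-1}$ then yields a polynomial-in-$\nu$ upper bound for $\sup_{z}|g_{2,\nu}^{(l)}(z)|$, which is dwarfed by $\epsilon^\nu$.

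\textbf{Third,} for the $s_j$ assertion I would induct on $j$ using Fact~\ref{fact:s_i-Allgemeine-Formel}. A short check shows that $s_i$ cannot appear on its own right-hand side: in the first sum, $j_i \geq 1$ together with $\sum_m m\, j_m = i$ forces $j_1 = \ldots = j_{i-1} = 0$ and $\sum_m j_m = 1$, contradicting $l \geq 2$; in the second sum, $j_i \geq 1$ already contradicts $\sum_m m\, j_m = i-1$. Hence $s_i$ is a polynomial in $s_1, \ldots, s_{i-1}$ whose coefficients are either $\nu$-independent constants (from $g_1^{(l)}(1)$) or of polynomial growth $\OO{\nu^l}$ (from $g_{2,\nu}^{(l)}(1)$, by step two evaluated at $z=1$). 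Induction then gives $s_j(\nu) = \OO{\nu^{N_j}}$ for some $N_j \in \mathbb{N}_0$, so $s_j \cdot \epsilon^\nu \to 0$, and $s_j \cdot \mu_\nu \to 0$ a fortiori since $\mu_\nu$ is exponentially small.

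The only genuinely delicate step is the combinatorial bookkeeping inside the recursion of Fact~\ref{fact:s_i-Allgemeine-Formel}: confirming that $s_i$ is absent from its own defining identity, and propagating the polynomial growth bound cleanly through the nested multinomial sums. Everything else reduces to the standard principle that exponential decay outruns polynomial growth.
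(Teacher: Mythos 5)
Your proposal is correct and follows essentially the same route as the paper: boundedness of $g_1^{(l)}$ near $1$ by $\nu$-independence and continuity, a polynomial-in-$\nu$ bound on $g_{2,\nu}^{(l)}$ coming from the falling factorial $k_0^l$ and the fact that $(1+\gamma\mu_\nu)^{k_0}\to 1$, and then exponential decay beating polynomial growth. Your third step merely makes explicit (via the check that $j_i\geq 1$ is incompatible with the multi-index constraints, so $s_i$ depends only on $s_1,\dots,s_{i-1}$) what the paper dismisses as "a simple consequence of $s_j$ being a sum of products of the previously considered terms," which is a welcome but not essentially different elaboration.
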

\begin{proof}
$g_{1}^{\left(l\right)}\left(z\right)$ is independent of $\nu$ for all $l\in\mathbb{N}_{0}$. Therefore the statement for $g_{1}^{\left(l\right)}$ follows from the fact that $g_{1}^{\left(l\right)}$ is continuous and thus bounded on a  sufficiently small neighbourhood of $1$. $g_{2}^{\left(l\right)}\left(z\right)$ depends on $\nu$ via $s$ and $k_{0}$. Each $g_{2,\nu}^{\left(l\right)}\left(z\right)$ is a linear combination of the functions $C_{t,r}\left(z\right)$, $\left(1-z\right)$, $G\left(z\right)$, $\left(1-c_{o}z^{o}\right)^{-1}$, $z^{k_{0}}$, $z^{os}$ and their derivatives. The first four of these (and their derivatives) do not depend on $\nu$ and thus are bounded on a sufficiently small neighbourhood of $1$ due to being continuous. Because the assumption $n\in p\mathbb{N}$ leads to $k_{0}=os$, it suffices to consider one of the remaining terms $z^{k_{0}}$ and $z^{os}$. Since $k_{0}$ tends to $\infty$ for $\nu\rightarrow\infty$, one can assume without loss of generality that $k_{0}>k$. This leads to  
\begin{eqnarray*}
\frac{\mathrm{d}^{l}}{\mathrm{d}z^{l}}\left(z^{k_{0}}\right) & = & z^{k_{0}-l}\cdot\prod_{j=0}^{l-1}\left(k_{0}-j\right),
\end{eqnarray*}
 and thus
\begin{align*}
\sup_{1\leq z\leq1+\gamma\mu_{\nu}}\left|\frac{\mathrm{d}^{l}}{\mathrm{d}z^{l}}\left(z^{k_{0}}\right)\left(z\right)\right| & \leq k_{0}^{l}\cdot\left(1+\gamma\mu_{\nu}\right)^{k_{0}}\\
 & \leq\left(\nu p\cdot\sup\varphi\right)^{l}\cdot\left(1+\gamma\mu_{w}c_{o}^{\nu-1}\right)^{\nu p\cdot\sup\varphi}.
\end{align*}
 The term $\left(\nu p\cdot\sup\varphi\right)^{l}$ is a polynomial expression in $\nu$ and $\left(1+\gamma\mu_{w}c_{o}^{\nu-1}\right)^{\nu p\cdot\sup\varphi}$ is a bounded function of $\nu$ on $\left[0,\infty\right)$ because
\begin{align*}
\left(1+\gamma\mu_{w}c_{o}^{\nu-1}\right)^{\nu p\cdot\sup\varphi} & =e^{\log\left(1+\gamma\mu_{w}c_{o}^{\nu-1}\right)\cdot\nu p\cdot\sup\varphi}\\
 & \leq e^{\gamma\mu_{w}c_{o}^{\nu-1}\cdot\nu p\cdot\sup\varphi}
\end{align*}
 and this tends to $1$ for $\nu\rightarrow\infty$. Hence, the statement for $g_{2,\nu}^{\left(l\right)}$ follows from the fact that $\epsilon^{\nu}\cdot P\left(\nu\right)\rightarrow0$ for any polynomial $P$ in $\nu$. The statement for $s_{j}$ is a simple consequence of $s_j$ being a sum of products of the previously considered terms.
 \end{proof}
 
\begin{lem}\label{lem:f'_von_Null_weg_Beschaenkt}
Let $\gamma>0$. Then there are $K>0$ and $\nu_{0}\in\mathbb{N}$ such that for all $\nu\geq\nu_{0}$:
\[
0<K\leq\inf_{1\leq z\leq1+\gamma\mu_{\nu}}\left|f_{\nu}'\left(z\right)\right|.
\]
\end{lem}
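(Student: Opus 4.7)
My plan is to exploit the decomposition \eqref{eq:f_nu-als-g1_g2} by differentiating: $f_\nu'(z) = g_1'(z) + \mu_\nu g_{2,\nu}'(z)$. The strategy is a perturbation argument in which $g_1'$ supplies a definite lower bound near $z=1$, while the correction $\mu_\nu g_{2,\nu}'(z)$ is negligible as $\nu \to \infty$.

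First I would verify that $g_1'(1) = -G(1)/(1-c_o) \neq 0$. Indeed $G(1) \neq 0$ because $\bbar M$ is an irreducible stochastic matrix, so $1$ is a \emph{simple} zero of $\zeta_{\mathrm{cl}}^{-1}(z) = (1-z)G(z)$; and $c_o \in (0,1)$, with $c_o>0$ from admissibility of $(x_1,\dots,x_p)$, and $c_o<1$ because the periodic defining word, being reduced and satisfying $x_{i+p}=x_i$, forces at least one factor $p_{x_{i},x_{i+1}}<1$. Then by continuity of $g_1'$ there exist $\delta>0$ and $K_1>0$ with $|g_1'(z)| \geq 2 K_1$ for all $z \in [1,1+\delta]$. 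Since $\mu_\nu \to 0$, I can take $\nu$ large enough that $[1,1+\gamma\mu_\nu] \subset [1,1+\delta]$, which puts the lower bound $|g_1'(z)| \geq 2K_1$ in force on the whole interval of interest.

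For the correction term, I would pick any $\epsilon \in (c_o,1)$. The identity $\mu_\nu = (\mu_w/c_o) \cdot c_o^\nu$ implies $\mu_\nu \leq C \epsilon^\nu$ for some constant $C$ and all $\nu$ large enough. Applying Lemma~\ref{lem:s-k_mal_mu_gegen_Null} with $l=1$ yields
\[
\sup_{1 \leq z \leq 1+\gamma\mu_\nu} |g_{2,\nu}'(z)| \cdot \epsilon^\nu \longrightarrow 0,
\]
and multiplying by $C$ gives $\mu_\nu \cdot \sup_{1 \leq z \leq 1+\gamma\mu_\nu}|g_{2,\nu}'(z)| \to 0$. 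Hence there exists $\nu_0$ such that for all $\nu \geq \nu_0$ this product is bounded by $K_1$.

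The conclusion is then immediate from the reverse triangle inequality: for $\nu \geq \nu_0$ and $z \in [1, 1+\gamma\mu_\nu]$,
\[
|f_\nu'(z)| \geq |g_1'(z)| - \mu_\nu \cdot |g_{2,\nu}'(z)| \geq 2K_1 - K_1 = K_1 > 0,
\]
giving the claim with $K := K_1$. The only mildly delicate point is confirming $c_o < 1$, which relies on combining the reducedness hypothesis with the periodicity of the orbit; everything else is a routine perturbation argument powered by Lemma~\ref{lem:s-k_mal_mu_gegen_Null}.
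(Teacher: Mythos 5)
Your proof is correct and follows essentially the same route as the paper: the decomposition $f_\nu'(z)=g_1'(z)+\mu_\nu g_{2,\nu}'(z)$, a lower bound on $\left|g_1'\right|$ near $z=1$ coming from $g_1'(1)=-G(1)/(1-c_o)\neq 0$ (which is $\nu$-independent, giving a uniform $K$), and Lemma~\ref{lem:s-k_mal_mu_gegen_Null} combined with $\mu_\nu=\mu_w c_o^{\nu-1}$ to kill the correction term. The paper reaches the bound on $\left|g_1'\right|$ by splitting it into $-G(z)/(1-c_o z^o)$ plus a vanishing $(1-z)$-term rather than by citing continuity at $1$, but this is only a cosmetic difference.
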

\begin{proof}
By definition: 
\[
f_{\nu}'\left(z\right)=g_{1}'\left(z\right)+\mu_{\nu}\cdot g_{2,\nu}'\left(z\right).
\]
 Since $\mu_{\nu}=\mu_{w}c_{o}^{\nu-1}$ and $c_{o}<1$, Lemma~\ref{lem:s-k_mal_mu_gegen_Null} implies
\[
\lim_{\nu\rightarrow\infty}\mu_{\nu}\cdot\sup_{1\leq z\leq1+\gamma\mu_{\nu}}\left|g_{2,\nu}'\left(z\right)\right|=0.
\]
 Hence it suffices to show that $\inf_{1\leq z\leq1+\gamma\mu_{\nu}}\left|g_{1}'\left(z\right)\right|>0$
for sufficiently large $\nu$. This is the case, because  
\begin{align*}
g_{1}'\left(z\right) & =\frac{\mathrm{d}}{\mathrm{d}z}\left(\left(1-z\right)\cdot G\left(z\right)\cdot\frac{1}{1-c_{o}z^{o}}\right)\left(z\right)\\
 & =-G\left(z\right)\cdot\frac{1}{1-c_{o}z^{o}}+\left(1-z\right)\cdot\underset{\eqqcolon\left(\star\right)}{\underbrace{\frac{\mathrm{d}}{\mathrm{d}z}\left(G\left(z\right)\cdot\frac{1}{1-c_{o}z^{o}}\right)}}.
\end{align*}
 The first summand is bounded away from zero for $\nu$ large enough (and thus $z$ close to $1$), since
$G\left(1\right)\neq0$. The second summand tends to zero for $\nu\rightarrow\infty$, because the term $\left(\star\right)$ is continuous and thus bounded on a sufficiently small neighbourhood of $1$, and  $\left|1-z\right|\leq\gamma\mu_{\nu}$ tends to zero. The existence of a constant $K$ that does not depend on $\nu$ is due to $g_{1}'$ not depending on $\nu$.
\end{proof}
 \begin{assumption}\label{as:periodisch-schrumpfendes-Loch}
	Let $x=(x_1,x_2,\dots)\in X$ be a periodic point of prime period $p$ such that $(x_1,\dots,x_p)$ is a reduced word and the local escape rate  $\rho\left(x,\ind\right)$ with respect to the base transformation exists. Assume that $A_{\nu}\coloneqq\left[x_{1},\dots,x_{\nu p}\right]$ for $\nu\in\mathbb{N}$ defines a descending sequence of holes whose measures are denoted by $\mu_{\nu}\coloneqq\mu\left(A_{\nu}\right)$.
\end{assumption}

\begin{thm}\label{thm:Cristadoro_Nullstelle_kleinO}
	Let Assumption~\ref{as:periodisch-schrumpfendes-Loch} be fulfilled, let the ceiling function $\varphi$ be $1$-arithmetic, let $k\in\mathbb{N}$, and let  $s_{i}\left(\nu\right)$ for $i=1,\dots,k$ be as in Fact~\ref{fact:s_i-Allgemeine-Formel}. Then for  $z_{\nu}\coloneqq e^{\rho\left(A_{\nu},\varphi\right)}$ it holds that 
\[
\left|z_{\nu}-\left(1+s_{1}\mu_{\nu}+s_{2}\mu_{\nu}^{2}+\dots+s_{k}\mu_{\nu}^{k}\right)\right|=\oo{\mu_{\nu}^{k}}.
\]
\end{thm}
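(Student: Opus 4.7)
Set $\tilde z_\nu \coloneqq 1 + s_1(\nu)\mu_\nu + s_2(\nu)\mu_\nu^2 + \dots + s_k(\nu)\mu_\nu^k$. The plan is to first show that $f_\nu(\tilde z_\nu) = \oo{\mu_\nu^k}$, and then to convert this estimate into the required bound on $|z_\nu - \tilde z_\nu|$ via the mean value theorem, invoking Lemma~\ref{lem:f'_von_Null_weg_Beschaenkt} to keep the derivative away from zero. The strategy for the first step is exactly the heuristic described in the paragraph preceding the theorem: Taylor expand $f_\nu$ at $1$, substitute $z = \tilde z_\nu$, collect powers of $\mu_\nu$, and use the construction of the $s_i$ to annihilate the coefficients of $\mu_\nu^0, \mu_\nu^1, \dots, \mu_\nu^k$.

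First I would write, for the rational function $f_\nu$ (which has no pole at $1$),
\[
f_\nu(z) = \sum_{l=0}^{k}\frac{f_\nu^{(l)}(1)}{l!}(z-1)^l + \frac{f_\nu^{(k+1)}(\xi)}{(k+1)!}(z-1)^{k+1}
\]
for a suitable $\xi\in(1,z)$. By Lemma~\ref{lem:s-k_mal_mu_gegen_Null} each $s_j(\nu)\mu_\nu\to 0$, so $\tilde z_\nu - 1 = \OO{\mu_\nu}$ and $\tilde z_\nu$ lies in $[1,1+\gamma\mu_\nu]$ for some fixed $\gamma>0$ and all sufficiently large $\nu$. Substituting $z=\tilde z_\nu$ and expanding $(\tilde z_\nu-1)^l$ via the multinomial theorem, then grouping by powers of $\mu_\nu$, reproduces precisely the structural expansion displayed in equation~\eqref{eq:f_nu_als_PR}. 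The $\mu_\nu^0$ coefficient is $g_1(1) = 0$, and by construction (Fact~\ref{fact:s_i-Allgemeine-Formel}) the coefficients of $\mu_\nu^i$ for $1\le i\le k$ vanish as well. What remains are finitely many ``leftover'' terms of total $\mu_\nu$-degree strictly greater than $k$, together with the Taylor remainder.

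Next I would verify that both contributions are $\oo{\mu_\nu^k}$. Each polynomial leftover is a finite sum of products of factors $s_j(\nu)$, $g_1^{(l)}(1)$ and $g_{2,\nu}^{(l)}(1)$ (with $l\le k$), multiplied by $\mu_\nu^j$ for some $j\ge k+1$. Since $\mu_\nu \le \mu_w c_o^{\nu-1}$ decays exponentially, while Lemma~\ref{lem:s-k_mal_mu_gegen_Null} (applied with any $\epsilon\in(c_o,1)$) shows that each such factor grows strictly sub-exponentially in $\nu$, the product of a single $\mu_\nu$ with any finite combination of these factors tends to $0$. Dividing the leftover term by $\mu_\nu^k$ therefore leaves at least one extra factor $\mu_\nu$ multiplied by such a sub-exponential expression, which vanishes in the limit. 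For the Taylor remainder, $|\tilde z_\nu - 1|^{k+1} = \OO{\mu_\nu^{k+1}}$, and $\sup_{z\in[1,1+\gamma\mu_\nu]}|f_\nu^{(k+1)}(z)|$ is bounded because $g_1^{(k+1)}$ is continuous and independent of $\nu$ while $\mu_\nu\sup|g_{2,\nu}^{(k+1)}|\to 0$ by Lemma~\ref{lem:s-k_mal_mu_gegen_Null}. Altogether $f_\nu(\tilde z_\nu) = \oo{\mu_\nu^k}$.

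Finally, Lemma~\ref{lem:Abschaetzung_kleinste_NS_nach_oben} gives $z_\nu \in [1, 1+\gamma'\mu_\nu]$ for large $\nu$, so both $z_\nu$ and $\tilde z_\nu$ lie in a common interval $[1,1+\Gamma\mu_\nu]$. The mean value theorem applied to $f_\nu(z_\nu) = 0$ yields
\[
|f_\nu(\tilde z_\nu)| = |f_\nu(\tilde z_\nu) - f_\nu(z_\nu)| = |f_\nu'(\eta)|\cdot|\tilde z_\nu - z_\nu|
\]
for some $\eta \in [1, 1+\Gamma\mu_\nu]$, and Lemma~\ref{lem:f'_von_Null_weg_Beschaenkt} provides a constant $K>0$ with $|f_\nu'(\eta)|\geq K$ for all large $\nu$. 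Dividing gives $|\tilde z_\nu - z_\nu|\le K^{-1}|f_\nu(\tilde z_\nu)| = \oo{\mu_\nu^k}$, which is the claim. The main obstacle will be the bookkeeping in the multinomial expansion: one must check carefully that, after the $s_i$ have been chosen to cancel the first $k$ coefficients, every residual term of $\mu_\nu$-degree exceeding $k$ is $\oo{\mu_\nu^k}$ despite the $\nu$-dependence of the $s_j$, $k_0$ and $s$. This is exactly the sub-exponential-versus-exponential gap that Lemma~\ref{lem:s-k_mal_mu_gegen_Null} is designed to exploit.
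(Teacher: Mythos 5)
Your proposal is correct and follows essentially the same route as the paper: Taylor expansion of $f_{\nu}$ at $1$, cancellation of the coefficients of $\mu_{\nu},\dots,\mu_{\nu}^{k}$ by the construction of the $s_{i}$, control of the leftover and remainder terms via Lemma~\ref{lem:s-k_mal_mu_gegen_Null}, and conversion into a bound on $\left|z_{\nu}-\tilde z_{\nu}\right|$ by the mean value theorem together with Lemma~\ref{lem:f'_von_Null_weg_Beschaenkt} and the interval containment from Lemma~\ref{lem:Abschaetzung_kleinste_NS_nach_oben}. Your treatment of the Taylor remainder and of the sub-exponential-versus-exponential bookkeeping is in fact slightly more explicit than the paper's.
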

\begin{proof}
First note that $s_{1}>0$ is constant and thus Lemma~\ref{lem:s-k_mal_mu_gegen_Null} implies that there is a $\gamma>0$ such that 
\[
1\leq1+s_{1}\mu_{\nu}+s_{2}\mu_{\nu}^{2}+\dots+s_{k}\mu_{\nu}^{k}\leq1+\gamma\mu_{\nu}
\]
 for sufficiently large $\nu$. $\gamma$ can be chosen large enough to be suitable for Lemma~\ref{lem:Abschaetzung_kleinste_NS_nach_oben}, hence fulfilling $1\leq z_{\nu}<1+\gamma\mu_{\nu}$ for sufficiently large $\nu$. Thus one can estimate the quality of the approximation for $z_\nu$ via
\begin{equation*}
\left|z_{\nu}-\left(1+s_{1}\mu_{\nu}+\dots+s_{k}\mu_{\nu}^{k}\right)\right| 
\leq\frac{\left|f_{\nu}\left(z_{\nu}\right)-f_{\nu}\left(1+s_{1}\mu_{\nu}+\dots+s_{k}\mu_{\nu}^{k}\right)\right|}{\inf_{1\leq z\leq1+\gamma\mu_{\nu}}\left|f_{\nu}'\left(z\right)\right|}.
\end{equation*}
 Since $f_{\nu}\left(z_{\nu}\right)=0$ and $f_{\nu}\left(1+s_{1}\mu_{\nu}+s_{2}\mu_{\nu}^{2}+\dots+s_{k}\mu_{\nu}^{k}\right)$ can be written as an expression of the form 
\[
\mu_{\nu}^{k+1}\cdot\left(\textrm{polynomial  in \ensuremath{\mu_{\nu}},  \ensuremath{s_{j}},  \ensuremath{g_{1}^{\left(l\right)}} and \ensuremath{g_{2,\nu}^{\left(l\right)}}}\right)
\]
according to the definition of the $s_{j}$ as the terms that make the coefficients of $\mu_{\nu},\dots,\mu_{\nu}^{k}$ in equation~\eqref{eq:f_nu_als_PR} disappear, one applies Lemmata~\ref{lem:s-k_mal_mu_gegen_Null} and \ref{lem:f'_von_Null_weg_Beschaenkt} in order to obtain $\bigl|z_{\nu}-\bigl(1+s_{1}\mu_{\nu}+s_{2}\mu_{\nu}^{2}+\dots+s_{k}\mu_{\nu}^{k}\bigr)\bigr|=\oo{\mu_{\nu}^{k}}$.
\end{proof}

\subsection{Second order results for general cylinder functions}\label{sec:Cristadoro_Folgerungen}

Despite its name, the local escape rate only carries a global information about the ceiling function, namely the integral $\mu \left(\varphi\right)$. One needs to look at the second order term so as to obtain more information. In general, one cannot expect to recover the value of the ceiling function in a point $x$, because the escape rate is invariant under addition of coboundaries (Proposition~\ref{prop:EigenschaftenAR_Dach} (\ref{enu:EigenschaftenAR_Dach_Coboundary})). Such a modification of the ceiling function leaves the orbit length of a point invariant though. And indeed it is possible to recover the orbit lengths of periodic points from the second order term of the escape rate.

\begin{thm}\label{thm:Explizite-erste-zwei-Ordnungen}
	Let Assumption~\ref{as:periodisch-schrumpfendes-Loch} be fulfilled and let the ceiling function $\varphi$ be a cylinder function. Then 
	\[
		\rho\left(A_{\nu},\varphi\right)
		=
		\frac{1-c_{o}}{\mu\left(\varphi\right)} \cdot \mu_{\nu}
		+
		\left(\frac{1-c_{o}}{\mu\left(\varphi\right)}\right)^2 \cdot S_{p}\varphi\left(x\right) \cdot \nu \cdot \mu_{\nu}^2 
		+ \oo{\nu \cdot \mu_{\nu}^2 }
	\]
	for $\nu \rightarrow \infty$ with $c_{o}=p_{x_{1},x_{2}} \cdots p_{x_{p},x_{p+1}}$ and $\mu_{\nu}=\mu\left(A_{\nu}\right)$.
\end{thm}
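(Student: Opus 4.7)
The plan is to treat the $1$-arithmetic case first as a direct consequence of Theorem~\ref{thm:Cristadoro_Nullstelle_kleinO}, then to extend to $\lambda$-arithmetic ceiling functions by the scaling property, and finally to general cylinder functions by a monotone sandwich.

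For the $1$-arithmetic case I would apply Theorem~\ref{thm:Cristadoro_Nullstelle_kleinO} with $k=2$ to obtain $z_\nu \coloneqq e^{\rho(A_\nu,\varphi)} = 1 + s_1\mu_\nu + s_2\mu_\nu^2 + \oo{\mu_\nu^2}$, and then take logarithms: using $\log(1+u) = u - \tfrac{1}{2}u^2 + \OO{u^3}$ yields
\[
	\rho(A_\nu,\varphi) = s_1\mu_\nu + \bigl(s_2 - \tfrac{1}{2}s_1^2\bigr)\mu_\nu^2 + \oo{\mu_\nu^2}.
\]
Substituting the explicit formulas \eqref{eq:s1_s2}, the coefficient $s_1 = (1-c_o)/\mu(\varphi)$ is independent of $\nu$, while $s_2 = s_1^2 \bigl(k_0 + \OO{1}\bigr)$ with $k_0 = (\nu - n/p)\cdot S_p\varphi(x)$ linear in $\nu$. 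Since $\mu_\nu = \mu_w c_o^{\nu-1}$ decays exponentially, one has $\mu_\nu^2 = \oo{\nu\mu_\nu^2}$; hence every $\nu$-independent contribution (in particular $\tfrac{1}{2}s_1^2\mu_\nu^2$ and the $\OO{1}$-part of $s_2\mu_\nu^2$) merges into $\oo{\nu\mu_\nu^2}$, leaving precisely the formula stated by the theorem.

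The passage to a $\lambda$-arithmetic $\varphi$ is immediate: $\psi \coloneqq \varphi/\lambda$ is $1$-arithmetic, so Proposition~\ref{prop:EigenschaftenAR_Dach} (\ref{enu:EigenschaftenAR_Dach_Skalar}) gives $\rho(A_\nu,\varphi) = \lambda^{-1}\rho(A_\nu,\psi)$; substituting the identities $\mu(\psi) = \lambda^{-1}\mu(\varphi)$ and $S_p\psi(x) = \lambda^{-1}S_p\varphi(x)$ into the previous step, all powers of $\lambda$ cancel. For a general cylinder function $\varphi$ of order $n$, I would bracket it between $1/q$-arithmetic cylinder functions $\varphi_q^\pm$ satisfying $\varphi_q^- \le \varphi \le \varphi_q^+$ and $\|\varphi_q^+ - \varphi_q^-\|_\infty \le 1/q$. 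Proposition~\ref{prop:EigenschaftenAR_Dach} (\ref{enu:EigenschaftenAR_Dach_Monotonie}) then sandwiches $\rho(A_\nu,\varphi)$ between $\rho(A_\nu,\varphi_q^\pm)$, both of which admit the expansion from the previous paragraph with principal coefficients that converge to the target ones as $q\to\infty$.

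The hard part is this third step: closing the sandwich requires the $\oo{\nu\mu_\nu^2}$-remainder in Theorem~\ref{thm:Cristadoro_Nullstelle_kleinO} to hold uniformly in the arithmetic scale $1/q$, despite the fact that the dimension of $\bbar M_\mathrm{op}$ grows as $q \to \infty$. Tracing through the proofs of Lemmata~\ref{lem:s-k_mal_mu_gegen_Null} and~\ref{lem:f'_von_Null_weg_Beschaenkt}, one sees that the relevant implicit constants depend only on $\sup\varphi$, $\inf\varphi$, $c_o$, and on $G$ and $C_{t,r}$ evaluated near $1$, all of which stay under control as $q \to \infty$. With such uniformity in hand, $q = q(\nu)$ can be chosen tending to infinity slowly enough with $\nu$ that the $\OO{1/q}$-errors in the first two coefficients of the expansion for $\varphi_q^\pm$ are absorbed into the $\oo{\nu\mu_\nu^2}$-remainder, completing the argument.
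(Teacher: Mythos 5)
Your first step (the $1$-arithmetic case via Theorem~\ref{thm:Cristadoro_Nullstelle_kleinO}, taking logarithms, and observing that all $\nu$-independent second-order contributions are $\oo{\nu\mu_\nu^2}$) is exactly the paper's argument, and the reduction of the $\lambda$-arithmetic case by scaling is unobjectionable. The problem is your third step. The sandwich $\rho(A_\nu,\varphi_q^+)\le\rho(A_\nu,\varphi)\le\rho(A_\nu,\varphi_q^-)$ has a first-order mismatch: the coefficients $(1-c_o)/\mu(\varphi_q^\pm)$ differ from $(1-c_o)/\mu(\varphi)$ by $\OO{1/q}$, so the gap between the two bounds contains a term of order $\mu_\nu/q$. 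To absorb this into $\oo{\nu\mu_\nu^2}$ you need $1/q=\oo{\nu\mu_\nu}$, and since $\mu_\nu=\mu_w c_o^{\nu-1}$ decays exponentially, $q(\nu)$ must grow \emph{exponentially} in $\nu$ --- the opposite of ``slowly enough.'' That makes the uniformity you invoke the entire difficulty: after rescaling a $1/q$-arithmetic function to be $1$-arithmetic, the quantities $o$, $k_0$, and the dimension of $\bbar M$ all grow linearly in $q$, so $G$ and $C_{t,r}$ are determinants and cofactors of matrices of unbounded size, and the assertion that $G'(1)/G(1)$, $C_{t,r}'(1)/C_{t,r}(1)$ and the implicit constants in Lemmata~\ref{lem:s-k_mal_mu_gegen_Null} and~\ref{lem:f'_von_Null_weg_Beschaenkt} ``stay under control'' is exactly what would have to be proved. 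You offer no argument for it. The paper's own closing remark is a further warning: the convergence of the normalised second-order quotient $F_\nu$ is demonstrably \emph{not} locally uniform in the supremum norm, so any approximation argument in that norm is on thin ice and needs genuine quantitative input.

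The paper closes the gap by an entirely soft route that you should compare with: it rewrites the second-order coefficient as the limit of the functions $F_\nu(\varphi)=\bigl(\rho^{-1}(A_\nu,\varphi)\mu_\nu-\rho^{-1}(x,\varphi)\bigr)/(\nu\mu_\nu)$, observes that each $F_\nu$ is convex on the open cone of positive functions in the \emph{finite-dimensional} space $Z_n$ (convexity of $\rho^{-1}(A_\nu,\cdot)$ comes from the induced-pressure formulation, Corollary~\ref{cor:AR-sublinear}, and linearity of $\rho^{-1}(x,\cdot)$ from Theorem~\ref{thm:LokaleAR_IntegralMalBasis}), and then invokes Rockafellar's theorem that pointwise convergence of finite convex functions on a dense subset of an open convex set in finite dimensions forces convergence everywhere, with a continuous convex limit. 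This transfers the arithmetic-case limit to all cylinder functions without any uniform control of remainders. If you want to salvage your sandwich, you would have to supply the missing uniform-in-$q$ estimates for the spectral data of the growing matrices; the convexity argument is the way the paper avoids having to do that.
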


Note that Theorem~\ref{thm:Explizite-erste-zwei-Ordnungen} holds not only for cylinder functions, but due to Proposition~\ref{prop:EigenschaftenAR_Dach} (\ref{enu:EigenschaftenAR_Dach_Coboundary}) also for ceiling functions that differ from a positive cylinder function only by addition of a coboundary.

An immediate consequence of Theorem~\ref{thm:Explizite-erste-zwei-Ordnungen} is an explicit formula for the local escape rate of periodic points. For $\varphi = \ind$ this is a special case of the results in \cite{Keller:2009,Ferguson:2012} and reproduces the corresponding result in \cite{Cristadoro:2013}.

\begin{cor}
	\label{prop:LokaleAR_expliziter_Wert}  
		Let $x$ be a periodic point of prime period $p$ and let the ceiling function $\varphi$ be continuous. Then
	\[
	\rho\left(x,\varphi\right)=\frac{1-c_{o}}{\mu\left(\varphi\right)}
	\]
	with $c_{o}=p_{x_{1},x_{2}}\cdots p_{x_{p},x_{p+1}}$.
\end{cor}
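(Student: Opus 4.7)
The plan is to apply Theorem~\ref{thm:Explizite-erste-zwei-Ordnungen} to extract the leading-order coefficient along the periodic subsequence $\{A_\nu\}$, promote the statement from cylinder to continuous ceiling functions via the uniform-norm estimate from Proposition~\ref{prop:AR_glmKonvergenz}, and finally upgrade the subsequential convergence to the full limit $r \to 0$ required by Definition~\ref{def:lokale-AR}.

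First I would take $\varphi$ to be a cylinder function, whose order may without loss of generality be assumed to be a multiple of $p$. Dividing the expansion of Theorem~\ref{thm:Explizite-erste-zwei-Ordnungen} by $\mu_\nu$ yields
\[
\frac{\rho\left(A_\nu,\varphi\right)}{\mu_\nu}
= \frac{1-c_o}{\mu\left(\varphi\right)}
+ \left(\frac{1-c_o}{\mu\left(\varphi\right)}\right)^{2} S_p\varphi\left(x\right)\cdot\nu\mu_\nu
+ \oo{\nu\mu_\nu},
\]
and since $\mu_\nu = \mu_w c_o^{\nu-1}$ with $c_o \in (0,1)$, the factor $\nu\mu_\nu$ decays geometrically, so the ratio converges to $(1-c_o)/\mu(\varphi)$ along $\{A_\nu\}$. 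For general continuous $\varphi$ I would approximate uniformly by cylinder functions $\varphi_n$ (e.g.\ piecewise suprema over $n$-cylinders), insert $\varphi$ and $\varphi_n$ into the two-sided estimate~\eqref{eq:AR-Approximation-Abschaetzung} at each $A_\nu$, divide by $\mu_\nu$, and let first $\nu\to\infty$ and then $n\to\infty$; using $\mu(\varphi_n)\to\mu(\varphi)$, this squeezes $\rho(A_\nu,\varphi)/\mu_\nu$ to $(1-c_o)/\mu(\varphi)$ as well.

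The step I expect to be the main obstacle is lifting this convergence along the subsequence $\{A_\nu\}$ to the full limit $r\to 0$. For $n = \nu p + k$ with $0 < k < p$ the ball $B_r(x) = [x_1,\dots,x_n]$ lies between $A_{\nu+1}$ and $A_\nu$, but because $\mu_{\nu+1} = c_o\mu_\nu$, the induced sandwich on $\rho/\mu$ has mismatched endpoints roughly $(1-c_o)c_o/\mu(\varphi)$ and $(1-c_o)/\bigl(c_o\mu(\varphi)\bigr)$ and cannot determine the limit on its own. The cleanest way to bypass this is to invoke Theorem~\ref{thm:LokaleAR_IntegralMalBasis}, which reduces the problem to $\varphi = \ind$: atom-freeness of the Markov measure at $x$ is immediate, and the exponential decay of $P_k^\varepsilon$ is delivered by Proposition~\ref{prop:deviationProperty}, since the Markov measure is cohomologous to a $g$-measure for the locally constant (hence Hölder) potential $\log p_{\cdot,\cdot}$, for which $t\mapsto P(t\varphi+\psi)$ is real-analytic at $0$ by the Ruelle-Perron-Frobenius theorem. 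Combined with the classical identity $\rho(x,\ind) = 1 - c_o$ cited immediately above the corollary, this yields $\rho(x,\varphi) = \rho(x,\ind)/\mu(\varphi) = (1-c_o)/\mu(\varphi)$.
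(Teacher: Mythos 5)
Your final argument coincides with the paper's own proof: the paper likewise obtains $\rho\left(x,\ind\right)=1-c_{o}$ from Theorem~\ref{thm:Explizite-erste-zwei-Ordnungen} (whose Assumption~\ref{as:periodisch-schrumpfendes-Loch} already posits the existence of the local escape rate for the base, so the limit along the subsequence $\left(A_{\nu}\right)$ determines the full limit and your worry about intermediate cylinders is moot) and then passes to continuous $\varphi$ via Theorem~\ref{thm:LokaleAR_IntegralMalBasis}. The only imprecision is cosmetic: for a merely continuous ceiling function $\varphi$ the Ruelle--Perron--Frobenius theorem does not yield real-analyticity of $t\mapsto P\left(t\varphi+\psi\right)$, but differentiability at $0$ --- which is all that Proposition~\ref{prop:deviationProperty} requires --- still follows from the uniqueness of the equilibrium state of the H\"older potential $\psi$.
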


\begin{proof}
	The result for $\varphi = \ind$ follows directly from Theorem~\ref{thm:Explizite-erste-zwei-Ordnungen} and can be extended to continuous ceiling functions via Theorem~\ref{thm:LokaleAR_IntegralMalBasis}.
\end{proof}

\begin{proof}[Proof of Theorem~\ref{thm:Explizite-erste-zwei-Ordnungen}]
The result will be first shown for arithmetic ceiling functions $\varphi$. Without loss of generality, one can assume that $\varphi$ is $1$-arithmetic. Using $\rho\left(A_{\nu},\varphi\right)=\log z_{\nu}$, Theorem~\ref{thm:Cristadoro_Nullstelle_kleinO} implies
\[
z_{\nu}= 1+s_{1}\cdot\mu_{\nu}+s_{2}\cdot\mu_{\nu}^{2} + \oo{\mu_{\nu}^{2}}
\]
and thus one obtains via the approximation $\log\left(1+x\right)=x-1/2\cdot x^{2}+\oo{x^{2}}$ for the logarithm and by plugging in the values for $s_1$ and $s_2$ from equation~\eqref{eq:s1_s2}, that
\begin{align}
\rho\left(A_{\nu},\varphi\right)
& 	= s_{1} \cdot\mu_{\nu}
	+ \left(s_{2}-\frac{1}{2}s_{1}^{2}\right) \cdot\mu_{\nu}^{2}
	+ \oo{\mu_{\nu}^{2}}	 \label{eq:zweite-Ordnung-mit-K}	\\
&	= \frac{1-c_{o}}{\mu\left(\varphi\right)} \cdot\mu_{\nu}
	+ \left(\frac{1-c_{o}}{\mu\left(\varphi\right)}\right)^2 \cdot S_{p}\varphi\left(x\right) \cdot\nu  \cdot\mu_{\nu}^2
	+ \oo{\nu \cdot \mu_{\nu}^{2}}.	\nonumber
\end{align}
In order to extend the result to general cylinder functions, one rephrases equation~\eqref{eq:zweite-Ordnung-mit-K} as
\begin{align*}
	\rho\left(x,\varphi\right)^2 \cdot S_{p}\varphi\left(x\right)
&	= \lim\limits_{\nu \rightarrow \infty}
	\frac	{\rho\left(A_{\nu},\varphi\right) - \rho\left(x,\varphi\right) \cdot \mu_{\nu}}
			{\nu \cdot \mu_{\nu}^{2}} \\
&	= \lim\limits_{\nu \rightarrow \infty}
	\frac	{\rho^{-1}\left(A_\nu,\varphi\right) \cdot \mu_{\nu} - \rho^{-1}\left(x,\varphi\right)}
			{\nu \cdot \mu_{\nu}}
	\cdot \left(-1\right) \cdot
	\frac{\rho\left(A_\nu,\varphi\right)}{\mu_{\nu}} \cdot \rho\left(x,\varphi\right) \\
&	= -\rho\left(x,\varphi\right)^2 \cdot \lim\limits_{\nu \rightarrow \infty}
	\frac	{\rho^{-1}\left(A_\nu,\varphi\right) \cdot \mu_{\nu} - \rho^{-1}\left(x,\varphi\right)}
			{\nu \cdot \mu_{\nu}}.
\end{align*}
Corollary~\ref{cor:AR-sublinear} states that $\rho^{-1}\left(A_\nu,\cdot\right)$ is a convex function for each $\nu$ and it follows from Theorem~\ref{thm:LokaleAR_IntegralMalBasis} that  $\rho^{-1}\left(x,\cdot\right)$ is linear. Hence,
\begin{equation}
	F_\nu\left(\varphi\right) = 
	\frac	{\rho^{-1}\left(A_\nu,\varphi\right) \cdot \mu_{\nu} - \rho^{-1}\left(x,\varphi\right)}
			{\nu \cdot \mu_{\nu}}
	\label{eq:konvexe-Hilfsfunktion}
\end{equation}
is a convex function for each $\nu$. Considering the open subset $C = \left\{\varphi \in Z_n \middle|\;  \varphi >0 \right\}$ in the finite dimensional vector space $Z_n$ of cylinder functions that are constant on $n$-cylinders, $\left(F_\nu\right)$ is a sequence of finite, convex functions such that on the dense subset $C' \subset C$ of arithmetic functions the pointwise limit exists and is finite. A result from convex analysis ({\cite[Theorem 10.8]{Rockafellar:1972}}) then implies that the pointwise limit exists for all $\varphi \in C$, that the limit function $F$ defined by the pointwise limits is finite and convex and that the convergence is uniform on each closed bounded subset of $C$. Since the $F_\nu$ are continuous on $C$, this implies that the limit function $F$ is continuous on $C$ as well. Thus equation~\eqref{eq:zweite-Ordnung-mit-K} holds for all $\varphi \in C$. Since this is true for all $n \in \mathbb{N}$, one obtains the desired result for general cylinder functions.
\end{proof}

The technique used in the proof of Theorem~\ref{thm:Explizite-erste-zwei-Ordnungen} to extend the result from arithmetic cylinder functions to general cylinder functions is limited to finite dimensional vector spaces and thus cannot be used to obtain Theorem~\ref{thm:Explizite-erste-zwei-Ordnungen} for general continuous ceiling functions.

In fact it is easy to see that the convergence of the sequence of functions given by  equation~\eqref{eq:konvexe-Hilfsfunktion} cannot be locally uniform with respect to the supremum norm. Consider an $\varepsilon$-neighbourhood of a ceiling function $\varphi$ with $0<\varepsilon<\inf\varphi$. For each $\nu$ one can find a function $\psi_{\nu}\in B_{\varepsilon}\left(\varphi\right)$ such that $\mu\left(\psi_{\nu}\right)=\mu\left(\varphi\right)$
and $\rho\left(A_{\nu},\psi_{\nu}\right)=\rho\left(A_{\nu},\varphi+\varepsilon/2\cdot\ind\right)$. This is a consequence of Proposition~\ref{prop:EigenschaftenAR_Mengen} which states that one can modify the function $\varphi+\varepsilon/2\cdot\ind$ on the union of finitely many preimages of $A_{\nu}$ (hence on a set of measure arbitrarily close to $1$) without changing the escape rate with respect to the hole $A_{\nu}$. The denominator in equation~\eqref{eq:konvexe-Hilfsfunktion} goes to zero whereas for such a sequence of $\psi_{\nu}$ the numerator converges towards the positive value
\[
\rho^{-1}\left(x,\varphi + \frac{\varepsilon}{2}\cdot\ind \right) - \rho^{-1}\left(x,\varphi\right)
=
\frac{\varepsilon}{2} \cdot \rho^{-1}\left(x,\ind\right).
\]
and thus the diagonal sequence given by $F_\nu \left(\psi_\nu\right)$ does not converge to a finite value, thus contradicting locally uniform convergence.

Note that one cannot conclude from this that Theorem~\ref{thm:Explizite-erste-zwei-Ordnungen} fails for general continuous functions. The lack of locally uniform convergence only implies that one cannot use a naive approximation argument to obtain such a result.

\end{document}